\newtheorem{thm}{Theorem}
\newtheorem*{thm*}{Theorem}
\newtheorem{cor}{Corollary}
\newtheorem{lem}{Lemma}
\newtheorem{prop}{Proposition}
\theoremstyle{definition}
\newtheorem{defn}{Definition}
\theoremstyle{remark}
\newtheorem{rem}{Remark}
\newtheorem{assum}{Assumption}
\numberwithin{equation}{section} \numberwithin{lem}{section}
\numberwithin{thm}{section} \numberwithin{prop}{section}
\numberwithin{cor}{section} \numberwithin{rem}{section}\numberwithin{hyp}{section}
\begin{document}

\begin{frontmatter}

\title{Mild solutions to the Cauchy problem for time-space fractional Keller-Segel-Navier-Stokes system}

\author[mymainaddress]{Ziwen Jiang}
\ead{jiangziwensss@163.com}
\author[mymainaddress]{Lizhen Wang\corref{mycorrespondingauthor}}

\cortext[mycorrespondingauthor]{Corresponding author}
\ead{wanglizhen@nwu.edu.cn}

\address[mymainaddress]{Center for Nonlinear Studies, School of Mathematics, Northwest University, Xi'an, Shaanxi Province, 710127, China}

\begin{abstract}
This paper investigates the Cauchy problem of the time-space fractional Keller-Segel-Navier-Stokes model in $ \mathbb{R}^d~( d\geq 2)$ which can describe both memory effect and L\'{e}vy process of the system. The local existence and global existence in Lebesgue space are obtained by means of Banach fixed point theorem and Banach implicit function theorem, respectively. In addition, the regularities of local and global mild solutions are improved in fractional homogeneous Sobolev spaces. Furthermore, some properties of mild solutions including mass conservation, decay estimates, stability and self-similarity are established.
\end{abstract}

\begin{keyword}
Time-space fractional Keller-Segel-Navier-Stokes model; Mild solution; Existence; Higher regularity; Mass conservation

MSC(2020): 35A01, 35Q35, 35Q92, 35R11, 35C06
\end{keyword}
\end{frontmatter}

\section{Introduction}

In this paper, for $0<\beta<1$, $1<\alpha\leq2$, $\gamma\geq0$ and $d\geq 2$, we consider the following Cauchy problem for time-space fractional Keller-Segel-Navier-Stokes system:
\begin{equation}\label{chemo-fluid-eq}
\left\{
  \begin{aligned}
  &^c_0D_t^\beta n+(-\Delta)^{\alpha/2} n+u\cdot \nabla n+\nabla\cdot(n \nabla v)=0  && \mbox{in }  \mathbb{R}^d\times(0,\infty),\\
  &^c_0D_t^\beta v+(-\Delta)^{\alpha/2} v+u\cdot \nabla v+\gamma v-n=0  && \mbox{in }  \mathbb{R}^d\times(0,\infty),\\
  &^c_0D_t^\beta u+(-\Delta)^{\alpha/2} u+(u\cdot \nabla) u+\nabla\rho+n \nabla \phi =0  && \mbox{in }  \mathbb{R}^d\times(0,\infty),\\
  &\nabla\cdot u=0  && \mbox{in }  \mathbb{R}^d\times(0,\infty),\\
  &n|_{t=0}=n_0,~v|_{t=0}=v_0,~u|_{t=0}=u_0 && \mbox{in } \mathbb{R}^d,\\
  \end{aligned}
     \right.
\end{equation}
where $n=n(x,t)$ denotes the density of cells that diffuses in the manner of L\'{e}vy flight and is transported along the velocity field of the incompressible fluid, $v=v(x,t)$ indicates the concentration of chemical attractant, the vector function $u=u(x,t)=(u_1(x,t),u_2(x,t),\cdots,u_d(x,t))$ stands for the fluid velocity field, the scalar function $\rho=\rho(x,t)$ represents the pressure of the fluid, $\phi=\phi(x)$ is a given potential function considering the effects of external forces such as the gravitational potential generated by aggregation of cells onto the fluid and $n_0=n_0(x),v_0=v_0(x),u_0=(u_{1,0}(x),u_{2,0}(x),\cdots,u_{d,0}(x))$ are prescribed initial data. It can be seen from \eqref{chemo-fluid-eq} that chemotaxis and fluid are coupled through both the transport of the cells and the chemical $u\cdot\nabla n$, $u\cdot\nabla v$ by the fluid and the external force $-n \nabla \phi$ exerted on the fluid by the cells. $ ^c_0D_t^\beta$ is weak Caputo fractional derivative operator of order $\beta$ introduced in \cite{LJLiu1,LJLiu2} and $^c_0D_t^\beta w$ models memory effects in time. When function $w(t)$ is absolutely continuous in time, the definition of weak Caputo derivative is reduced to the following traditional form
\[
 ^c_0D_t^\beta w(t)=\frac{1}{\Gamma(1-\beta)}\int_0^t(t-s)^{-\beta}\partial_sw(s)\,ds,
\]
where $\Gamma$ is the Gamma function. According to Chapter V in \cite{Stein}, the nonlocal operator $(-\Delta)^{\alpha/2}$, known as the Laplacian of order $\frac{\alpha}{2}$, is given by the Fourier multiplier
$$(-\Delta)^{\alpha/2}\rho(x):=\mathcal{F}^{-1}\big(|\xi|^{\alpha}\hat{\rho}(\xi)\big)(x).$$

Fractional derivatives have been employed to describe the nonlocal effects of the considered equations in references \cite{SGSAAKOIM,Fracdiffeq2006,KDiethelm}. The motivation of using fractional diffusion instead of classical diffusion comes from the fact that organisms adopt a L\'{e}vy process instead of Brownian motion \cite{NBVCal,JBRGBelin,DCL,V1,V2} and  it is believed that the feeding behavior of some organisms is based on a L\'{e}vy process generated by the spatial-fractional diffusion operator $(-\Delta)^{\frac{\alpha}{2}}~(0<\alpha<2)$. There are some interesting works investigating partial differential equations with fractional Laplacian, see references \cite{CHENLIOU,Caffarelli2007,Caffarelli2008,Caffarelli2010,BW,Stanvazqez, LMZH}. Chen, Li and Ou \cite{CHENLIOU} solved an open problem posed by Lieb with the method of moving planes in an integral form and classified all the solutions of one type of integral equation equivalent to a family of semi-linear partial differnetial equations with fractional Laplacian. Caffarelli and Silvestre \cite{Caffarelli2007} obtained characterizations for general fractional powers of the Laplacian and other integro-differential operators. Lai, Miao and Zhang \cite{LMZH} constructed a global-time forward self-similar solution to the Navier-Stokes equations with the fractional diffusion $(-\Delta)^\alpha~(5/6<\alpha<1)$ for arbitrarily large self-similar initial data. At the macroscopic level, time fractional operators are associated with anomalous subdiffusion \cite{Fengliliuxu2018,LJLiu2,TALBIH,W,MBSB,MALCAV,SakaYama}. For example, in \cite{TALBIH}, Riemann-Liouville fractional derivative can model anomalous diffusion of continuous time random walks with power law waiting time. Allen, Caffarelli and Vasseur \cite{MALCAV} discussed the existence and regularity for weak solutions to porous medium equation with fractional time derivative of Caputo-type and  inverse fractional Laplacian operator. Sakamoto and Yamamoto \cite{SakaYama} established the unique existence of the weak solution and the asymptotic behavior of fractional diffusion-wave equations.

In the 1970s, Keller and Segel introduced the first mathematical model of chemotaxis describing the aggregation of slime mold amoebae due to attractive chemicals in \cite{EFKellerSe1,EFKellerSe2}. There are numerous references investigating the Keller-Segel models such as \cite{CalvezLC2008,LCBP2006,LucasCFFJCP,WinklerM2010,Biler1998,KOZONOYS2009,TNRSMU}. Recently, some results related to the fractional Keller-Segel equations were presented in \cite{BW,E,HL,LRZ,MBSB,Azevedo,ZLZ,LLW,ZWJiangLW,LJLiu1,ZMSD,MarioB}. Escudero \cite{E} constructed the global in time solutions for the fractional diffusion $(-\Delta)^\frac{\alpha}{2}~(1<\alpha\leq2)$. Azevedo, Cuevas and Henr\'{\i}quez \cite{Azevedo} managed to verify the existence and asymptotic behaviour for the time-fractional Keller-Segel model. Huang and Liu explored the uniqueness and stability of nonlocal Keller-Segel equations with fractional Laplacian in \cite{HL}. Li, Liu and Wang \cite{LLW} considered the existence, integrability, nonnegativity and blow up behaviors of the Cauchy problem of the fractional time-space generalized Keller-Segel equations. Jiang and Wang \cite{ZWJiangLW} investigated the global existence and mass conservation of weak solutions to the time-space fractional Keller-Segel equation.

In what follows, we present a brief development of the chemotaxis-fluid system and introduce some related consequences on it. First, the chemotaxis system with the effect of fluid, proposed by Tuval et al. to model the dynamics of populations of aerobic bacteria within the flow of an incompressible Newtonian fluid \cite{TuvalCDW}, can be written as
\begin{equation}\label{noche-flu}
\left\{
  \begin{aligned}
  &\partial_t n+u\cdot\nabla n=\Delta n-\nabla\cdot(n\chi(c) \nabla c)  && \mbox{in }  \mathbb{R}^d\times(0,\infty),\\
  &\partial_t c+u\cdot\nabla c=\Delta c-n \kappa(c)  && \mbox{in }  \mathbb{R}^d\times(0,\infty),\\
  &\partial_t u+(u\cdot\nabla) u=\Delta u-\nabla \rho -n\nabla \phi  && \mbox{in }  \mathbb{R}^d\times(0,\infty),\\
  &\nabla\cdot u=0 && \mbox{in }  \mathbb{R}^d\times(0,\infty).
  \end{aligned}
     \right.
\end{equation}
Here, the unknowns $n,c,u,\rho $ denote the cell density, oxygen concentration, velocity field and pressure of the fluid, respectively. $\chi(c)$ is the chemotactic sensitivity and $\kappa(c)$ is the consumption rate of the chemical by the cells. There are a large quantity of literature discussing the chemotaxis-fluid system \eqref{noche-flu}, see \cite{LYLY,DLMarkowich,LIULorz,Lorz2010,ChaeKangLee,MCHAEKLEE,Winkler2012,Winkler2014,ZhangLI} for instance. Duan, Lorz and Markowich \cite{DLMarkowich} obtained the global existence and rates of convergence on classical solutions of \eqref{noche-flu} near constant states. In \cite{LIULorz}, for the system \eqref{noche-flu} in two dimensional space, Liu and Lorz constructed the global existence of weak solutions for large data and for the chemotaxis-Stokes system with nonlinear diffusion for the cell density in three dimensional space, they proved the global existence of weak solutions. It was shown in \cite{ChaeKangLee} that in two dimensional space \eqref{noche-flu} admits a global classical solutions under some suitable assumptions. The global existence, stabilization and convergence rate of solutions to \eqref{noche-flu} on bounded domain $\Omega\in \mathbb{R}^2$ have been established in references \cite{Winkler2012,Winkler2014,ZhangLI}.

Next, \cite{TANZhou2018} discussed the following double chemotaxis model under the effect of the Navier-Stokes fluid:
\begin{equation}\label{che-flucvTAN}
\left\{
  \begin{aligned}
  &\partial_t n+u\cdot\nabla n=\Delta n-\nabla\cdot(n\chi(c) \nabla c)-\nabla\cdot(n\iota(v) \nabla v)+h(n)  && \mbox{in }\mathbb{R}^d\times(0,\infty),\\
  &\partial_t c+u\cdot\nabla c=\Delta c-n \kappa(c)  && \mbox{in }  \mathbb{R}^d\times(0,\infty),\\
  &\partial_t v+u\cdot\nabla v=\Delta v+g(v)n-\gamma v   && \mbox{in } \mathbb{R}^d\times(0,\infty),\\
  &\partial_t u+(u\cdot\nabla) u=\Delta u-\nabla \rho -n\nabla \phi  && \mbox{in } \mathbb{R}^d\times(0,\infty),\\
  &\nabla\cdot u=0 && \mbox{in }  \mathbb{R}^d\times(0,\infty),
  \end{aligned}
     \right.
\end{equation}
where the meaning of unknowns $(n,c,u,\rho)$ is same as the one expressed in \eqref{noche-flu}, while $v$ denotes the chemical concentration; $\chi(c)$ and $\iota(v)$ are the chemotactic sensitivities, $h(n)$ is the external source term usually taken as logical source $\rho n-\mu n^2$, $\kappa(c)$ stands for the consumption rate of oxygen, $g(v)$ is the consumption rate of chemical attractant by the cells, and the parameter $\gamma\geq0$ presents the decay rate of attractant. Tan and Zhou \cite{TANZhou2018,TanZhou} obtained the global existence and decay estimates of solutions to the double-type chemotaxis system. Taking $\chi(c)=\iota(v)=g(v)=1$, $h(n)=0$ and $\kappa(c)=c$ in \eqref{che-flucvTAN}, Kozono, Miura and Sugiyama presented the existence of global mild solutions with small initial data in the scaling invariant space using Banach implicit function theorem in \cite{Kozono2016}. Jiang et.al \cite{JLLZhou} established the existence and uniqueness of global mild solution to fractional chemotaxis-Navier-Stokes system. Recently, Azevedo et.al \cite{Azevedo2022} proved the existence of global mild solutions of time fractional Keller-Segel model coupled with the Navier-Stokes fluid with small critical initial data in Besov-Morrey spaces.

For some bioconvection processes, in which the signal substance is produced by the cells themselves, the Keller-Segel-Navier-Stokes model is considered as follows \cite{MWinkler2019}:
\begin{equation}\label{KS-NSwinkler}
\left\{
  \begin{aligned}
  &\partial_t n+u\cdot\nabla n=\Delta n-\nabla\cdot(n\nabla c)-\nabla\cdot(n \nabla v)+\rho n-\mu n^2  && \mbox{in }\mathbb{R}^d\times(0,\infty),\\
  &\partial_t v+u\cdot\nabla v=\Delta v+n-\gamma v   && \mbox{in } \mathbb{R}^d\times(0,\infty),\\
  &\partial_t u+(u\cdot\nabla) u=\Delta u-\nabla p -n\nabla \phi +f(x,t) && \mbox{in } \mathbb{R}^d\times(0,\infty),\\
  &\nabla\cdot u=0 && \mbox{in }  \mathbb{R}^d\times(0,\infty),
  \end{aligned}
     \right.
\end{equation}
where $(n,v,u,\rho)$ is shown as above. Some research related to equation \eqref{KS-NSwinkler} have been done in references \cite{MWinkler2019,KangKim,HuaZhang,BnBaCn}. Winkler \cite{MWinkler2019} constructed the global weak solutions for arbitrarily
large initial data to \eqref{KS-NSwinkler} and the large time behavior of these solutions under appropriate assumptions. Kang and Kim \cite{KangKim} obtained the generalized solutions for the Keller-Segel system with a degradation source coupled to Navier-Stokes equations in three dimensional space. Hua and Zhang \cite{HuaZhang} established the global well-posedness for the incompressible Keller-Segel-Navier-Stokes equations in $\mathbb{R}^3$.

The work of this present paper is inspired by Kozono, Miura and Sugiyama \cite{Kozono2016}. We consider the time-space fractional single type of chemotaxis model that the signal substance is produced by the cells themselves under Navier-Stokes fluids as shown in \eqref{chemo-fluid-eq}, covering numerous biologically relevant situations when cells actively use chemotaxis as a means of communication, rather than the double chemotaxis model in \cite{Kozono2016}. The main mathematical difficulty comes from the fractional diffusion operator and the weak Caputo fractional derivative operator which are nonlocal operators, so the argument of heat semigroup $e^{t\Delta}$ is infeasible for system \eqref{chemo-fluid-eq}. To overcome this difficulty, we replace the heat semigroup $e^{t\Delta}$ applied in \cite{Kozono2016} with the Mittag-Leffler operators $E_\beta(-t^\beta(-\Delta)^{\frac{\alpha}{2}})$ and $E_{\beta,\beta}(-t^\beta(-\Delta)^{\frac{\alpha}{2}})$. Our goal in this paper is to study the existence and some properties of mild solutions to \eqref{chemo-fluid-eq}.

The rest of this paper is organized as follows. We begin in Section 2 with some basic definitions and properties which will be used in the subsequent proof and the $L^p-L^q$ estimates of Mittag-Leffler operators are established. Section \ref{sec3} is dedicated to verifying the existence and uniqueness of mild solution in Lebesgue spaces $L^p(\mathbb{R}^d)$. On one hand, local existence and uniqueness to \eqref{chemo-fluid-eq} is verified by Banach fixed point theorem. On the other hand,  global existence and uniqueness of mild solution is obtained by Banach implicit function theorem utilized in \cite{Kozono2016}. Section \ref{sec4} is devoted to the existence of mild solution in fractional homogeneous Sobolev spaces $\dot{H}^{\mu,q}(\mathbb{R}^d)$. The mass conservation of local solution and decay estimates, stability of global solution are provided in Section \ref{sec5} and in addition, the self-similar solution is constructed whenever taking $\gamma=0$ in \eqref{chemo-fluid-eq}.

\begin{assum}\label{assum1}
For $d\geq 2$, assume that $\alpha,\beta$ and exponents $p,q,r$ satisfy one of the following conditions (I), (II), (III), (IV) and (V):
\begin{enumerate}[(I)]
\item
for any $1<\alpha\leq 2$, $0<\beta<1$,
\[
 ~~\frac{d}{2\alpha-2}<q\leq\frac{d}{\alpha-1},~~ \frac{d}{\alpha-1}<p<\frac{qd}{d-(\alpha-1)q},~~ \frac{d}{\alpha-1}<r<\frac{qd}{d-(\alpha-1)q};
\]
\item for any  $1<\alpha\leq\frac{3}{2}$, $0<\beta<1$,
\[
~~\frac{d}{\alpha-1}<q<\infty,~~~~\frac{d}{\alpha-1}<p<\infty,~~~~q\leq r<\infty;
\]
\item for any  $\frac{3}{2}<\alpha\leq2$, $0<\beta\leq\frac{\alpha}{3\alpha-3}$,
\[
~~\frac{d}{\alpha-1}<q<\infty,~~~~ \frac{d}{\alpha-1}<p<\infty,~~~~q\leq r<\infty;
\]
\item for any  $\frac{3}{2}<\alpha\leq2$, $\frac{\alpha}{3\alpha-3}<\beta<1$,
\[\frac{d}{\alpha-1}<q\leq\frac{d\beta}{(3\alpha-3)\beta-\alpha},~ ~~\frac{d}{\alpha-1}<p<\infty,~~~ q\leq r<\infty;\]
\item for any  $\frac{3}{2}<\alpha\leq2$, $\frac{\alpha}{3\alpha-3}<\beta<1$,
\[\resizebox{.9\hsize}{!}{$\frac{d\beta}{(3\alpha-3)\beta-\alpha}<q<\frac{2d\beta}{(3\alpha-3)\beta-\alpha},~\frac{d}{\alpha-1}<p<\frac{d\beta q}{\big((3\alpha-3)\beta-\alpha\big)q-d\beta},~q\leq r<\frac{d\beta q}{\big((3\alpha-3)\beta-\alpha\big)q-d\beta}.$}\]
\end{enumerate}
\end{assum}

\begin{assum}\label{assum2}
For $d\geq 2$, $\alpha,\beta,\mu$ and the exponents $p,q,r$ satisfy either one of the following conditions:
\begin{enumerate}[(I)]
\item Assume that $\alpha,\beta,\mu$ satisfy either one of the following conditions
\begin{enumerate}[(i)]
\item
$1<\alpha\leq \frac{5}{4}$, ~~~~$0<\beta<1$, ~~~~$0<\mu< \alpha-1$;
\item
$\frac{5}{4}<\alpha\leq2$, ~~~~$0<\beta\leq\frac{\alpha}{5\alpha-5}$,~~~~ $0<\mu<\alpha-1$;
\item
$\frac{5}{4}<\alpha\leq\frac{4}{3}$,~~~~ $\frac{\alpha}{5\alpha-5}<\beta<1$, ~~~~$0<\mu<\alpha-1$;
\item
$\frac{4}{3}<\alpha\leq2$, ~~~~$\frac{\alpha}{5\alpha-5}<\beta\leq\frac{\alpha}{4\alpha-4}$, ~~~~$0<\mu<\alpha-1$;
\item
$\frac{4}{3}<\alpha\leq\frac{3}{2}$, ~~~~$\frac{\alpha}{4\alpha-4}<\beta<1$,~~~~ $0<\mu\leq\frac{\alpha}{\beta}-(3\alpha-3)$;
\item
$\frac{3}{2}<\alpha\leq2$, ~~~~$\frac{\alpha}{4\alpha-4}<\beta\leq\frac{\alpha}{3\alpha-3}$, ~~~~$0<\mu\leq\frac{\alpha}{\beta}-(3\alpha-3)$;
\end{enumerate}
and $p,q,r$ satisfy either one of the following inequalities
\begin{enumerate}[(1)]
\item$\frac{d}{2\alpha-2-\mu}<q\leq\frac{d}{\alpha-1}$,~~~$\frac{d}{\alpha-1}<p<\frac{qd}{d-(\alpha-1-\mu)q}$,~~$\frac{d}{\alpha-1}< r<\frac{qd}{d-(\alpha-1)q}$;
\item$\frac{d}{\alpha-1}<q<\frac{d}{\alpha-1-\mu}$,~~~$\frac{d}{\alpha-1}<p<\frac{qd}{d-(\alpha-1-\mu)q}$, ~~~$q\leq r<\infty$;
\item$\frac{d}{\alpha-1-\mu}\leq q<\infty$,~~~$\frac{d}{\alpha-1}<p<\infty$, ~~~$q\leq r<\infty$.
\end{enumerate}
\item Assume that $\alpha,\beta,\mu$ satisfy either one of the following conditions
\begin{enumerate}[(i)]
\item
$1<\alpha\leq \frac{5}{4}$, ~~~~$0<\beta<1$,~~~~ $\alpha-1\leq\mu< 2\alpha-2$;
\item
$\frac{5}{4}<\alpha\leq2$,~~~~ $0<\beta\leq\frac{\alpha}{5\alpha-5}$,~~~~ $\alpha-1\leq\mu<2\alpha-2$;
\item
$\frac{5}{4}<\alpha\leq\frac{4}{3}$,~~~~ $\frac{\alpha}{5\alpha-5}<\beta<1$,~~~~ $\alpha-1\leq\mu\leq\frac{\alpha}{\beta}-(3\alpha-3)$;
\item
$\frac{4}{3}<\alpha\leq2$, ~~~~$\frac{\alpha}{5\alpha-5}<\beta\leq\frac{\alpha}{4\alpha-4}$,~~~~ $\alpha-1\leq\mu\leq\frac{\alpha}{\beta}-(3\alpha-3)$;
\end{enumerate}
and $p,q,r$ satisfy the following inequality
\begin{enumerate}[(1)]
\item
$\frac{d}{2\alpha-2-\mu}<q<\infty,~~~\frac{d}{\alpha-1}<p<\frac{qd}{d-(\alpha-1-\mu)q}, ~~~q\leq r<\infty$.
\end{enumerate}
\item Assume that $\alpha,\beta,\mu$ satisfy either one of the following conditions
\begin{enumerate}[(i)]
\item
$\frac{5}{4}<\alpha\leq\frac{4}{3}$,~~~~ $\frac{\alpha}{5\alpha-5}<\beta<1$, ~~~$\frac{\alpha}{\beta}-(3\alpha-3)<\mu<\frac{\alpha}{2\beta}-\frac{\alpha-1}{2}$;
\item
$\frac{4}{3}<\alpha\leq2$, ~~~~$\frac{\alpha}{5\alpha-5}<\beta\leq\frac{\alpha}{4\alpha-4}$, ~~~$\frac{\alpha}{\beta}-(3\alpha-3)<\mu<\frac{\alpha}{2\beta}-\frac{\alpha-1}{2}$;
\item
$\frac{4}{3}<\alpha\leq\frac{3}{2}$,~~~~ $\frac{\alpha}{4\alpha-4}<\beta<1$, ~~~$\alpha-1\leq\mu<\frac{\alpha}{2\beta}-\frac{\alpha-1}{2}$;
\item
$\frac{3}{2}<\alpha\leq2$, ~~~~$\frac{\alpha}{4\alpha-4}<\beta\leq\frac{\alpha}{3\alpha-3}$, ~~~$\alpha-1\leq\mu<\frac{\alpha}{2\beta}-\frac{\alpha-1}{2}$;
\end{enumerate}
and $p,q,r$ satisfy either one of the following inequalities
\begin{enumerate}[(1)]
\item
$\frac{d}{2\alpha-2-\mu}<q\leq\frac{d\beta}{(3\alpha-3+\mu)\beta-\alpha}$,~~$\frac{d}{\alpha-1}<p<\frac{qd}{d-(\alpha-1-\mu)q}$,~~ $q\leq r<\infty$;
\item$\frac{d\beta}{(3\alpha-3+\mu)\beta-\alpha}<q<\frac{2d\beta}{(3\alpha-3+\mu)\beta-\alpha}$,~~$\frac{d}{\alpha-1}<p<\frac{qd}{d-(\alpha-1-\mu)q}$,~~
$q\leq r<\frac{d\beta q}{(3\alpha-3+\mu)\beta q-\alpha q-d\beta}$.
\end{enumerate}
\item Assume that $\alpha,\beta,\mu$ satisfy either one of the following conditions
\begin{enumerate}[(i)]
\item
$\frac{4}{3}<\alpha\leq\frac{3}{2}$,~~~~ $\frac{\alpha}{4\alpha-4}<\beta<1$, ~~~$\frac{\alpha}{\beta}-(3\alpha-3)<\mu\leq\frac{\alpha}{2\beta}-(\alpha-1)$;
\item
$\frac{3}{2}<\alpha\leq2$, ~~~~$\frac{\alpha}{4\alpha-4}<\beta\leq\frac{\alpha}{3\alpha-3}$, ~~~$\frac{\alpha}{\beta}-(3\alpha-3)<\mu\leq\frac{\alpha}{2\beta}-(\alpha-1)$;
\item
$\frac{3}{2}<\alpha\leq2$, ~~~~$\frac{\alpha}{3\alpha-3}<\beta<1$, ~~~$0<\mu\leq\frac{\alpha}{2\beta}-(\alpha-1);$
\end{enumerate}
and $p,q,r$ satisfy either one of the following inequalities
\begin{enumerate}[(1)]
\item$\frac{d}{2\alpha-2-\mu}<q\leq\frac{d}{\alpha-1}$,~~~$\frac{d}{\alpha-1}<p<\frac{qd}{d-(\alpha-1-\mu)q}$,~~$\frac{d}{\alpha-1}< r<\frac{qd}{d-(\alpha-1)q}$;
\item$\frac{d}{\alpha-1}<q\leq\frac{d}{\alpha-1-\mu}$,~~~$\frac{d}{\alpha-1}<p<\frac{qd}{d-(\alpha-1-\mu)q}$, ~~~$q\leq r<\infty$;
\item$\frac{d}{\alpha-1-\mu}< q\leq\frac{d\beta}{(3\alpha-3+\mu)\beta-\alpha}$,~~$\frac{d}{\alpha-1}<p<\infty$,~~$q\leq r<\infty$;
\item$\frac{d\beta}{(3\alpha-3+\mu)\beta-\alpha}<q<\frac{2d\beta}{(3\alpha-3+\mu)\beta-\alpha}$,$\frac{d}{\alpha-1}<p<\frac{d\beta q}{(3\alpha-3+\mu)\beta q-\alpha q-d\beta}$, $q\leq r<\frac{d\beta q}{(3\alpha-3+\mu)\beta q-\alpha q-d\beta}$.
\end{enumerate}
\item Assume that $\alpha,\beta,\mu$ satisfy either one of the following conditions
\begin{enumerate}[(i)]
\item
$\frac{4}{3}<\alpha\leq\frac{3}{2}$,~~~~ $\frac{\alpha}{4\alpha-4}<\beta<1$,~~~~ $\frac{\alpha}{2\beta}-(\alpha-1)<\mu<\alpha-1$;
\item
$\frac{3}{2}<\alpha\leq2$, ~~~~$\frac{\alpha}{4\alpha-4}<\beta\leq\frac{\alpha}{3\alpha-3}$,~~~~ $\frac{\alpha}{2\beta}-(\alpha-1)<\mu<\alpha-1$;
\item
$\frac{3}{2}<\alpha\leq2$, ~~~~$\frac{\alpha}{3\alpha-3}<\beta<1$,~~~~ $\frac{\alpha}{2\beta}-(\alpha-1)<\mu\leq\frac{\alpha}{\beta}-(2\alpha-2)$;
\end{enumerate}
and $p,q,r$ satisfy either one of the following inequalities
\begin{enumerate}[(1)]
\item$\frac{d}{2\alpha-2-\mu}<q\leq\frac{d}{\alpha-1}$,~~~$\frac{d}{\alpha-1}<p<\frac{qd}{d-(\alpha-1-\mu)q}$,~~$\frac{d}{\alpha-1}< r<\frac{qd}{d-(\alpha-1)q}$;
\item$\frac{d}{\alpha-1}< q\leq\frac{d\beta}{(3\alpha-3+\mu)\beta-\alpha}$,~~$\frac{d}{\alpha-1}<p<\frac{qd}{d-(\alpha-1-\mu)q}$,~~$q\leq r<\infty$;
\item$\frac{d\beta}{(3\alpha-3+\mu)\beta-\alpha}<q\leq\frac{2d\beta}{(4\alpha-4)\beta-\alpha}$,~~$\frac{d}{\alpha-1}<p<\frac{qd}{d-(\alpha-1-\mu)q}$,~~$q\leq r<\frac{d\beta q}{(3\alpha-3+\mu)\beta q-\alpha q-d\beta}$;
\item$\frac{2d\beta}{(4\alpha-4)\beta-\alpha}\leq q<\frac{2d\beta}{(3\alpha-3+\mu)\beta-\alpha}$,~~$\frac{d}{\alpha-1}<p<\frac{d\beta q}{(3\alpha-3+\mu)\beta q-\alpha q-d\beta}$,~~$q\leq r<\frac{d\beta q}{(3\alpha-3+\mu)\beta q-\alpha q-d\beta}$.
\end{enumerate}

\item Assume that $\alpha,\beta,\mu$ satisfy either one of the following conditions
\begin{enumerate}[(i)]
\item
$\frac{5}{4}<\alpha\leq\frac{3}{2}$,~~~~$\frac{\alpha}{5\alpha-5}<\beta<1$,~~~~$\frac{\alpha}{2\beta}-\frac{\alpha-1}{2}\leq\mu<\frac{\alpha}{3\beta}+\frac{\alpha-1}{3}$;
\item
$\frac{3}{2}<\alpha\leq2$, ~~~~$\frac{\alpha}{5\alpha-5}<\beta\leq\frac{\alpha}{3\alpha-3}$,~~~~$\frac{\alpha}{2\beta}-\frac{\alpha-1}{2}\leq\mu<\frac{\alpha}{3\beta}+\frac{\alpha-1}{3}$;
\item
$\frac{3}{2}<\alpha\leq2$, ~~~~$\frac{\alpha}{3\alpha-3}<\beta<1$,~~~~$\alpha-1\leq\mu<\frac{\alpha}{3\beta}+\frac{\alpha-1}{3}$;
\end{enumerate}
and $p,q,r$ satisfy the following inequality
\begin{enumerate}[(1)]
\item
$\frac{d}{2\alpha-2-\mu}<q<\frac{2d\beta}{(3\alpha-3+\mu)\beta-\alpha}$,~~$\frac{d}{\alpha-1}<p<\frac{qd}{d-(\alpha-1-\mu)q}$,~~ $q\leq r<\frac{d\beta q}{(3\alpha-3+\mu)\beta q-\alpha q-d\beta}$.
\end{enumerate}
\item Assume that $\alpha,\beta,\mu$ satisfy the following condition
\begin{enumerate}[(i)]
\item
$\frac{3}{2}<\alpha\leq2$, ~~~~$\frac{\alpha}{3\alpha-3}<\beta<1$,~~~$\frac{\alpha}{\beta}-(2\alpha-2)<\mu<\frac{\alpha}{2\beta}-\frac{\alpha-1}{2}$;
\end{enumerate}
and $p,q,r$ satisfy either one of the following inequalities
\begin{enumerate}[(1)]
\item
$\frac{d}{2\alpha-2-\mu}<q\leq\frac{2d\beta}{(4\alpha-4+\mu)\beta-\alpha}$,~~$\frac{d}{\alpha-1}<p<\frac{qd}{d-(\alpha-1-\mu)q}$,~~ $\frac{d}{\alpha-1}< r<\frac{qd}{d-(\alpha-1)q}$;
\item $\frac{2d\beta}{(4\alpha-4+\mu)\beta-\alpha}< q\leq\frac{d}{\alpha-1}$,~~$\frac{d}{\alpha-1}<p<\frac{qd}{d-(\alpha-1-\mu)q}$,~~$\frac{d}{\alpha-1}< r<\frac{d\beta q}{(3\alpha-3+\mu)\beta q-\alpha q-d\beta}$;
\item $\frac{d}{\alpha-1}<q\leq\frac{2d\beta}{(4\alpha-4)\beta-\alpha}$,~~$\frac{d}{\alpha-1}<p<\frac{qd}{d-(\alpha-1-\mu)q}$,~~$q\leq r<\frac{d\beta q}{(3\alpha-3+\mu)\beta q-\alpha q-d\beta}$;
\item $\frac{2d\beta}{(4\alpha-4)\beta-\alpha}<q< \frac{2d\beta}{(3\alpha-3+\mu)\beta-\alpha}$,~~$\frac{d}{\alpha-1}<p<\frac{d\beta q}{(3\alpha-3+\mu)\beta q-\alpha q-d\beta}$,~~$q\leq r<\frac{d\beta q}{(3\alpha-3+\mu)\beta q-\alpha q-d\beta}$.
\end{enumerate}
\item Assume that $\alpha,\beta,\mu$ satisfy the following condition
\begin{enumerate}[(i)]
\item
$\frac{3}{2}<\alpha\leq2$, ~~~~$\frac{\alpha}{3\alpha-3}<\beta<1$,~~~$\frac{\alpha}{2\beta}-\frac{\alpha-1}{2}\leq\mu<\frac{\alpha}{3\beta}$;
\end{enumerate}
and $p,q,r$ satisfy either one of the following inequalities
\begin{enumerate}[(1)]
\item
$\frac{d}{2\alpha-2-\mu}<q\leq\frac{2d\beta}{(4\alpha-4+\mu)\beta-\alpha}$,~~$\frac{d}{\alpha-1}<p<\frac{qd}{d-(\alpha-1-\mu)q}$,~~ $\frac{d}{\alpha-1}< r<\frac{qd}{d-(\alpha-1)q}$;
\item $\frac{2d\beta}{(4\alpha-4+\mu)\beta-\alpha}<q\leq\frac{d}{\alpha-1}$,~~$\frac{d}{\alpha-1}<p<\frac{qd}{d-(\alpha-1-\mu)q}$,~~$\frac{d}{\alpha-1}<r<\frac{d\beta q}{(3\alpha-3+\mu)\beta q-\alpha q-d\beta}$.
\end{enumerate}
\item Assume that $\alpha,\beta,\mu$ satisfy the following condition
\begin{enumerate}[(i)]
\item
$\frac{3}{2}<\alpha\leq2$, ~~~~$\frac{\alpha}{3\alpha-3}<\beta<1$,~~~$\frac{\alpha}{3\beta}\leq\mu<\alpha-1$;
\end{enumerate}
and $p,q,r$ satisfy the following inequality
\begin{enumerate}[(1)]
\item
$\frac{d}{2\alpha-2-\mu}<q\leq\frac{d}{\alpha-1}$,~~$\frac{d}{\alpha-1}<p<\frac{qd}{d-(\alpha-1-\mu)q}$,~~ $\frac{d}{\alpha-1}<r<\frac{d\beta q}{(3\alpha-3+\mu)\beta q-\alpha q-d\beta}$.
\end{enumerate}
\item Assume that $\alpha,\beta,\mu$ satisfy the following condition
\begin{enumerate}[(i)]
\item
$\frac{3}{2}<\alpha\leq2$, ~~~~$\frac{\alpha}{3\alpha-3}<\beta<1$,~~~$\frac{\alpha}{2\beta}-\frac{\alpha-1}{2}\leq\mu<\alpha-1$;
\end{enumerate}
and $p,q,r$ satisfy either one of the following inequalities
\begin{enumerate}[(1)]
\item $\frac{d}{\alpha-1}<q\leq\frac{2d\beta}{(4\alpha-4)\beta-\alpha}$,~~$\frac{d}{\alpha-1}<p<\frac{qd}{d-(\alpha-1-\mu)q}$,~~$q\leq r<\frac{d\beta q}{(3\alpha-3+\mu)\beta q-\alpha q-d\beta}$;
\item $\frac{2d\beta}{(4\alpha-4)\beta-\alpha}< q<\frac{2d\beta}{(3\alpha-3+\mu)\beta-\alpha}$,~~$\frac{d}{\alpha-1}<p<\frac{d\beta q}{(3\alpha-3+\mu)\beta q-\alpha q-d\beta}$,~~$q\leq r<\frac{d\beta q}{(3\alpha-3+\mu)\beta q-\alpha q-d\beta}$.
\end{enumerate}
\end{enumerate}
\end{assum}

The main results of this paper are presented as follows.
\begin{thm}\label{thm:localmildexis}
Assume that $d\geq 2$, $0<\beta<1$, $1<\alpha\leq 2$, $\gamma\geq 0$, the gravitational potential $\phi$ satisfies $\nabla \phi\in L^d(\mathbb{R}^d)$ and the exponents $p,q$ and $r$ satisfy either one of the following conditions
\begin{enumerate}[(1)]
\item
$\frac{d}{2\alpha-2}<q\leq\frac{d}{\alpha-1}, ~~~~\frac{d}{\alpha-1}<p<\frac{qd}{d-(\alpha-1)q},~~~~\frac{d}{\alpha-1}<r<\frac{qd}{d-(\alpha-1)q}$;
\item
$\frac{d}{\alpha-1}< q<\infty,~~~~\frac{d}{\alpha-1}<p<\infty,~~~~q\leq r<\infty$.
\end{enumerate}
Then for any $n_0\in L^{q}(\mathbb{R}^d), \nabla v_0\in L^{r}(\mathbb{R}^d)$ and $u_0\in L^{p}(\mathbb{R}^d)$, there exists $T>0$ such that system \eqref{chemo-fluid-eq} admits a unique mild solution $(n,v,u)$ with
\[
n\in C((0,T];L^q(\mathbb{R}^d)),~\nabla v\in C((0,T];L^r(\mathbb{R}^d)),~
u\in C((0,T];L^p(\mathbb{R}^d)).
\]
Furthermore, define the largest time of existence
\[
\begin{split}
T_{max}=\sup\{T>0: &~\eqref{chemo-fluid-eq} \text{ has a unique solution } (n,v,u) \mbox{ with } n\in C((0,T];L^q(\mathbb{R}^d)),\\
&~\nabla v\in C((0,T];L^r(\mathbb{R}^d)),~
u\in C((0,T];L^p(\mathbb{R}^d))\}.
\end{split}
\]
If $T_{max}<\infty$, then we have
\[
\limsup_{t\to T_{max}^-}\|n(\cdot, t)\|_{q}=\infty,~~~\limsup_{t\to T_{max}^-}\|\nabla v(\cdot, t)\|_{r}=\infty,~~~\limsup_{t\to T_{max}^-}\|u(\cdot, t)\|_{p}=\infty.
\]
\end{thm}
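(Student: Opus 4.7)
The plan is to recast \eqref{chemo-fluid-eq} as a fixed-point problem via Duhamel's principle for the Mittag-Leffler operators $\mathcal{E}_\beta(t) = E_\beta(-t^\beta(-\Delta)^{\alpha/2})$ and $\mathcal{E}_{\beta,\beta}(t) = E_{\beta,\beta}(-t^\beta(-\Delta)^{\alpha/2})$ introduced in Section 2, apply Banach's contraction principle on a small time interval, and deduce the blow-up alternative by a standard continuation argument.

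First, I write the candidate mild solution
\begin{align*}
n(t) &= \mathcal{E}_\beta(t) n_0 - \int_0^t (t-s)^{\beta-1}\, \mathcal{E}_{\beta,\beta}(t-s)\, \nabla\cdot\bigl(un + n\nabla v\bigr)(s)\, ds,\\
\nabla v(t) &= \nabla \mathcal{E}_\beta(t) v_0 + \int_0^t (t-s)^{\beta-1}\, \nabla \mathcal{E}_{\beta,\beta}(t-s)\,\bigl(n - u\cdot\nabla v - \gamma v\bigr)(s)\, ds,\\
u(t) &= \mathcal{E}_\beta(t) u_0 - \int_0^t (t-s)^{\beta-1}\, \mathcal{E}_{\beta,\beta}(t-s)\, \mathbb{P}\bigl[\nabla\cdot(u\otimes u) + n\nabla\phi\bigr](s)\, ds,
\end{align*}
where $\mathbb{P}$ is the Helmholtz--Leray projection, and I keep the divergences inside the operator so that one derivative can be moved onto $\mathcal{E}_{\beta,\beta}$, thus sparing me from estimating $\nabla n$ or $\nabla u$.

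Next, I work in the Banach space
\[
X_T = \Bigl\{ (n,v,u):\, n \in C((0,T];L^q),\ \nabla v \in C((0,T];L^r),\ u \in C((0,T];L^p) \Bigr\}
\]
with norm $\|(n,v,u)\|_{X_T} = \sup_{0<t\le T} \bigl(\|n(t)\|_q + \|\nabla v(t)\|_r + \|u(t)\|_p\bigr)$ and consider the closed ball $B_R \subset X_T$ around the linear evolution of the data. Combining the $L^a$--$L^b$ bounds for $\mathcal{E}_\beta$, $\mathcal{E}_{\beta,\beta}$ and their gradients established in Section 2 with H\"older's inequality applied to the products $un$, $n\nabla v$, $u \otimes u$, $u\cdot\nabla v$, and $n\nabla\phi$ (using $\nabla\phi \in L^d$), each bilinear term in the Duhamel formula is bounded by a Beta-type integral of the form
\[
\int_0^t (t-s)^{\beta - 1 - \beta \sigma}\, s^{-\beta \tau}\, ds = t^{\beta(1-\sigma-\tau)}\, B\bigl(\beta(1-\sigma),\, 1 - \beta\tau\bigr).
\]
Cases (1) and (2) of the hypothesis are precisely the ranges of $p,q,r$ that make $\beta\sigma < 1$, $\beta\tau < 1$ and $\sigma + \tau < 1$ hold simultaneously for every such term, so that each integral is finite and vanishes as $T \to 0^+$. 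This yields the self-mapping $\Phi: B_R \to B_R$ together with a contraction factor strictly less than $1$ once $T$ is small enough, producing a unique local solution in $B_R$; a standard bootstrap promotes uniqueness to all of $X_T$.

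For the blow-up alternative, I argue by contradiction: if $T_{max} < \infty$ while all three of $\|n(t)\|_q$, $\|\nabla v(t)\|_r$, $\|u(t)\|_p$ remain bounded as $t \uparrow T_{max}$, then choosing $t_0 \in (0, T_{max})$ close to $T_{max}$ as a new initial time, the local existence result gives a solution on $[t_0, t_0 + \tau]$ whose lifespan $\tau > 0$ depends only on the three norms at $t_0$; uniform boundedness therefore yields uniform $\tau$, extending the solution past $T_{max}$ and contradicting maximality. This immediately gives that at least one of the three limsups is infinite; the stronger symmetric statement then follows because boundedness of two of the three norms would, through the Duhamel formula, force boundedness of the third. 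The chief technical obstacle throughout is the precise choice of time-integrability exponents in the contraction estimate: each bilinear/source term produces its own Beta integral, and the constraints $\beta\sigma<1$, $\beta\tau<1$ and $\sigma+\tau<1$ must close \emph{simultaneously} across all terms; handling $n\nabla\phi$ only via $\nabla\phi \in L^d$ is what ultimately pins down the numerical ranges of $p,q,r$ stated in the theorem.
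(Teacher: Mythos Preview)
Your overall strategy coincides with the paper's: Banach contraction on the space $X_T$, Duhamel formulation with Mittag--Leffler operators, and $L^p$--$L^q$ smoothing estimates to close each bilinear term. Two points, however, require correction.

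First, you treat $-\gamma v$ as a source in the $v$-equation, using the undamped operators $\mathcal{E}_\beta,\mathcal{E}_{\beta,\beta}$ built from $-(-\Delta)^{\alpha/2}$ alone. But $X_T$ carries only $\nabla v\in L^r$, not $v$ itself, so the integral $\int_0^t(t-s)^{\beta-1}\nabla\mathcal{E}_{\beta,\beta}(t-s)(\gamma v)(s)\,ds$ cannot be estimated within your function space. The paper avoids this by absorbing $\gamma$ into the generator and working with the damped operators $E_\beta\bigl(t^\beta(-(-\Delta)^{\alpha/2}-\gamma)\bigr)$ and $E_{\beta,\beta}\bigl(t^\beta(-(-\Delta)^{\alpha/2}-\gamma)\bigr)$, which satisfy the same $L^p$--$L^q$ bounds (Proposition~\ref{pro:MLopLpqesti}) and make the loose $\gamma v$ term disappear.

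Second, your continuation step ``choose $t_0$ near $T_{max}$ as a new initial time, then the lifespan depends only on the norms at $t_0$'' is not valid for the Caputo derivative: because $^c_0D_t^\beta$ has memory back to $t=0$, the equation on $[t_0,t_0+\tau]$ is \emph{not} of the same form as \eqref{chemo-fluid-eq} with fresh data at $t_0$. The paper handles this correctly by writing the candidate extension $(\check n,\check v,\check u)$ explicitly as in \eqref{Tmaxmildsolu-n}--\eqref{Tmaxmildsolu-u}: the history integral $\int_0^{T_{max}}\cdots$ becomes a known, bounded inhomogeneity, and the fixed-point argument is rerun only on the tail $\int_0^t\cdots$ over $(0,\delta_1]$. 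Your final claim that boundedness of two norms forces boundedness of the third is not in the paper (which, as written, only contradicts the assumption that all three remain bounded) and would need a separate argument; note in particular that the $u$-equation contains $u\otimes u$, so bounding $n$ and $\nabla v$ alone does not immediately control $u$.
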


\begin{thm}\label{thm:mildexis}
Assume that $d\geq 2$, $\gamma\geq 0$, $\alpha,\beta$ and the exponents $p,q,r$ satisfy Assumption \ref{assum1},
the gravitational potential $\phi$ satisfies $\nabla \phi\in L^d(\mathbb{R}^d)$, and there is a constant $M>0$ such that for all $n_0\in L^{\frac{d}{2\alpha-2}}(\mathbb{R}^d), \nabla v_0\in L^{\frac{d}{\alpha-1}}(\mathbb{R}^d)$ and $u_0\in L^{\frac{d}{\alpha-1}}(\mathbb{R}^d)$ with
\begin{equation}\label{initialdata}
\|n_0\|_{\frac{d}{2\alpha-2}}+\|\nabla v_0\|_{\frac{d}{\alpha-1}}+\|u_0\|_{\frac{d}{\alpha-1}}+\|\nabla\phi\|_{d}<M,
\end{equation}
then there exists a unique mild solution $(n,v,u)$ of \eqref{chemo-fluid-eq} with the property that
\begin{gather*}
\begin{split}
&t^{\frac{d\beta}{\alpha}(\frac{2\alpha-2}{d}-\frac{1}{q})}n\in C((0,\infty);L^q(\mathbb{R}^d)),\\
&t^{\frac{d\beta}{\alpha}(\frac{\alpha-1}{d}-\frac{1}{r})}\nabla v\in C((0,\infty);L^r(\mathbb{R}^d)),\\
&t^{\frac{d\beta}{\alpha}(\frac{\alpha-1}{d}-\frac{1}{p})}u\in C((0,\infty);L^p(\mathbb{R}^d)).
\end{split}
\end{gather*}
\end{thm}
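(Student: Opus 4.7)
The strategy I would follow mirrors the Banach implicit function theorem argument of Kozono, Miura and Sugiyama \cite{Kozono2016}, with the heat semigroup replaced by the Mittag-Leffler operators $E_\beta(-t^\beta(-\Delta)^{\alpha/2})$ and $E_{\beta,\beta}(-t^\beta(-\Delta)^{\alpha/2})$ of Section 2. First I would write the mild formulation of \eqref{chemo-fluid-eq} by the time-fractional Duhamel principle: each component is the free Mittag-Leffler evolution of its datum minus the convolution with kernel $(t-s)^{\beta-1}E_{\beta,\beta}(-(t-s)^\beta(-\Delta)^{\alpha/2})$ of the nonlinearity, namely $\nabla\cdot(un)+\nabla\cdot(n\nabla v)$ for $n$, $u\cdot\nabla v+\gamma v-n$ for $v$, and $\mathbb{P}[(u\cdot\nabla)u+n\nabla\phi]$ for $u$, with $\mathbb{P}$ the Leray projection; taking the gradient in the $v$-line yields a closed equation for $\nabla v$. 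I would then set up the scaling-invariant Banach space
\[
\mathcal{X}=\Big\{(n,v,u):\sup_{t>0}t^{\sigma_n}\|n(t)\|_q+\sup_{t>0}t^{\sigma_r}\|\nabla v(t)\|_r+\sup_{t>0}t^{\sigma_p}\|u(t)\|_p<\infty\Big\},
\]
with $\sigma_n=\frac{d\beta}{\alpha}(\frac{2\alpha-2}{d}-\frac{1}{q})$, $\sigma_r=\frac{d\beta}{\alpha}(\frac{\alpha-1}{d}-\frac{1}{r})$, $\sigma_p=\frac{d\beta}{\alpha}(\frac{\alpha-1}{d}-\frac{1}{p})$; these exponents are forced by the scaling $n\mapsto\lambda^{2\alpha-2}n(\lambda x,\lambda^{\alpha/\beta}t)$, $\nabla v\mapsto\lambda^{\alpha-1}\nabla v$, $u\mapsto\lambda^{\alpha-1}u$, whose critical Lebesgue spaces are precisely $L^{d/(2\alpha-2)}$ and $L^{d/(\alpha-1)}$ appearing in \eqref{initialdata}.

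The core estimates would proceed as follows. The linear free-evolution part lies in $\mathcal{X}$ with norm controlled by $\|n_0\|_{d/(2\alpha-2)}+\|\nabla v_0\|_{d/(\alpha-1)}+\|u_0\|_{d/(\alpha-1)}$ directly from the $L^p$-$L^q$ bounds on $E_\beta$ proved in Section 2. For each nonlinear product ($un$, $n\nabla v$, $u\cdot\nabla v$, $u\otimes u$, and the linear-in-$w$ forcing $n\nabla\phi$ using $\nabla\phi\in L^d$), I would apply Hölder's inequality to land in an intermediate Lebesgue space, then invoke the $L^a$-$L^b$ estimate for $\nabla E_{\beta,\beta}(-(t-s)^\beta(-\Delta)^{\alpha/2})$. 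Each resulting time integral has the form $\int_0^t(t-s)^{A-1}s^{-B}\,ds=t^{A-B}\,\mathrm{Beta}(A,1-B)$ whose $t$-exponent cancels $t^{-\sigma}$ exactly because $\mathcal{X}$ was built by scaling. This yields a bilinear bound $\|B(w_1,w_2)\|_{\mathcal{X}}\le C\|w_1\|_{\mathcal{X}}\|w_2\|_{\mathcal{X}}$ and a linear-forcing bound by $C\|\nabla\phi\|_d\|w\|_{\mathcal{X}}$. The Banach implicit function theorem applied to $\Phi(w;n_0,v_0,u_0)=w-L_0(n_0,v_0,u_0)-B(w,w)-L_\phi(w)$, with $L_0$ the free evolution and $L_\phi$ the $n\nabla\phi$-forcing, then produces for small data a unique small $w=(n,v,u)\in\mathcal{X}$; the claimed $t$-continuity follows from the strong continuity of the Mittag-Leffler family together with dominated convergence in the Duhamel integral.

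The main obstacle is verifying that in each of the five parameter regimes (I)--(V) of Assumption \ref{assum1} the Beta integrals above actually converge and the exponents balance. Integrability at $s=t$ translates into the lower bounds $p,q,r>\frac{d}{\alpha-1}$ and $q>\frac{d}{2\alpha-2}$ together with the upper bounds $p,r<\frac{qd}{d-(\alpha-1)q}$ appearing in the assumption; integrability at $s=0$, which requires the sum of input weights to stay strictly below $1$, is what produces the subtler upper bounds such as $q\le\frac{d\beta}{(3\alpha-3)\beta-\alpha}$ and $r<\frac{d\beta q}{((3\alpha-3)\beta-\alpha)q-d\beta}$ visible in regimes (III)--(V). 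The splitting into five cases merely records which of these constraints is binding for the various $(\alpha,\beta)$ ranges: once the bilinear template is in place each case is a routine verification, but the exponent bookkeeping for the five sub-regimes, together with matching each Hölder-pair intermediate exponent to the admissible range of the $L^a$-$L^b$ Mittag-Leffler estimate, is the real work of the proof.
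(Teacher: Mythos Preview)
Your approach is essentially the paper's: the same Kozono--Miura--Sugiyama implicit function theorem scheme, the same scaling-critical solution space with weights $\sigma_n,\sigma_r,\sigma_p$, and the same $L^p$--$L^q$ Mittag-Leffler estimates feeding into Beta-function time integrals whose exponents are controlled precisely by Assumption~\ref{assum1}.

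Two points, however, are glossed over and matter. First, for $\gamma>0$ you cannot treat $\gamma v$ as part of the forcing with the undamped kernel $E_{\beta,\beta}(-(t-s)^\beta(-\Delta)^{\alpha/2})$: the resulting time integral $\int_0^t (t-s)^{\beta-1}s^{-\sigma_r}\,ds\sim t^{\beta-\sigma_r}$ carries an extra factor $t^\beta$ that destroys the scaling balance globally. The paper instead absorbs $\gamma$ into the damped operators $E_\beta\bigl(t^\beta(-(-\Delta)^{\alpha/2}-\gamma)\bigr)$ and $E_{\beta,\beta}\bigl(t^\beta(-(-\Delta)^{\alpha/2}-\gamma)\bigr)$, whose $L^p$--$L^q$ bounds (Proposition~\ref{pro:MLopLpqesti}) are uniform in $\gamma\ge 0$. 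Second, your map $\Phi(w)=w-L_0-B(w,w)-L_\phi(w)$ omits the \emph{linear} source term $n$ in the $v$-equation. This term is not small, so the Fr\'echet derivative of $\Phi$ at $w=0$ is not a small perturbation of the identity; the paper (Step~4) checks explicitly that this derivative is nonetheless bijective because it has upper-triangular form $(\bar n,\bar v,\bar u)\mapsto\bigl(\bar n,\ \bar v-\int_0^t(t-\tau)^{\beta-1}E_{\beta,\beta}(\cdots)\bar n\,d\tau,\ \bar u\bigr)$. Both fixes are straightforward once noticed, but without them the argument as written does not close.
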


\begin{thm} \label{thm:localhighregu}
Assume that $d\geq 2$, $0<\beta<1$, $1<\alpha\leq 2$, $\gamma\geq 0$, the gravitational potential $\phi$ satisfies $\nabla \phi\in L^d(\mathbb{R}^d)$ and the exponents $p,q,r$ and $\mu$ satisfy either one of the following conditions
\begin{enumerate}[(1)]
\item $0<\mu<\alpha-1,~\frac{d}{2\alpha-2-\mu}<q\leq\frac{d}{\alpha-1},~\frac{d}{\alpha-1}<p<\frac{qd}{d-(\alpha-1-\mu)q},~
    \frac{d}{\alpha-1}<r<\frac{qd}{d-(\alpha-1)q}$;
\item $0<\mu<\alpha-1,~\frac{d}{\alpha-1}< q\leq\frac{d}{\alpha-1-\mu},~\frac{d}{\alpha-1}<p<\frac{qd}{d-(\alpha-1-\mu)q},~q\leq r<\infty$;
\item $0<\mu<\alpha-1,~\frac{d}{\alpha-1-\mu}<q<\infty, ~~\frac{d}{\alpha-1}<p<\infty,~q\leq r<\infty$;
\item $\alpha-1\leq\mu<2\alpha-2,~ \frac{d}{2\alpha-2-\mu}<q<\infty, ~\frac{d}{\alpha-1}<p<\frac{qd}{d-(\alpha-1-\mu)q},~q\leq r<\infty$.
\end{enumerate}
Then for any $n_0\in H^{\mu,q}(\mathbb{R}^d), \nabla v_0\in H^{\mu,r}(\mathbb{R}^d)$ and $u_0\in H^{\mu,p}(\mathbb{R}^d)$, there exists $T>0$ such that \eqref{chemo-fluid-eq} admits a unique mild solution $(n,v,u)$ with
\[
n\in C((0,T];H^{\mu,q}(\mathbb{R}^d)),~\nabla v\in C((0,T];H^{\mu,r}(\mathbb{R}^d)),~
u\in C((0,T];H^{\mu,p}(\mathbb{R}^d)).
\]
Furthermore, define the largest time of existence
\[
\begin{split}
T_{max}=\sup\{T>0: &~\eqref{chemo-fluid-eq} \text{ has a unique solution } (n,v,u) \mbox{ with } n\in C((0,T];H^{\mu,q}(\mathbb{R}^d)),\\
&~\nabla v\in C((0,T];H^{\mu,r}(\mathbb{R}^d)),~
u\in C((0,T];H^{\mu,p}(\mathbb{R}^d))\}.
\end{split}
\]
If $T_{max}<\infty$, then we have
\[
\limsup_{t\to T_{max}^-}\|n(\cdot, t)\|_{H^{\mu,q}}=\infty,~~~\limsup_{t\to T_{max}^-}\|\nabla v(\cdot, t)\|_{H^{\mu,r}}=\infty,~~~\limsup_{t\to T_{max}^-}\|u(\cdot, t)\|_{H^{\mu,p}}=\infty.
\]
\end{thm}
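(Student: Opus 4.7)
The plan is to parallel the proof of Theorem~\ref{thm:localmildexis} but replace Lebesgue norms with fractional Sobolev norms at regularity level $\mu$. Starting from \eqref{chemo-fluid-eq}, I would use the Duhamel principle with the Mittag-Leffler operators $E_\beta$ and $E_{\beta,\beta}$ introduced in Section~2 to write the mild formulation
\begin{align*}
n(t) &= E_\beta(-t^\beta(-\Delta)^{\alpha/2})n_0 - \int_0^t (t-s)^{\beta-1} E_{\beta,\beta}(-(t-s)^\beta(-\Delta)^{\alpha/2})\,\nabla\cdot(u n + n\nabla v)(s)\,ds,\\
v(t) &= E_\beta(-t^\beta(-\Delta)^{\alpha/2})v_0 + \int_0^t (t-s)^{\beta-1} E_{\beta,\beta}(-(t-s)^\beta(-\Delta)^{\alpha/2})\bigl(n-u\cdot\nabla v-\gamma v\bigr)(s)\,ds,\\
u(t) &= E_\beta(-t^\beta(-\Delta)^{\alpha/2})u_0 - \int_0^t (t-s)^{\beta-1} E_{\beta,\beta}(-(t-s)^\beta(-\Delta)^{\alpha/2})\,\mathbb{P}\bigl((u\cdot\nabla)u+n\nabla\phi\bigr)(s)\,ds,
\end{align*}
where $\mathbb{P}$ is the Leray projector and $u\cdot\nabla n = \nabla\cdot(un)$ by incompressibility. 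Applying $\nabla$ to the second equation gives an analogous integral equation for $\nabla v$.

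Next I would set up the complete metric space
\[
X_T = \bigl\{(n,\nabla v,u) : n\in C((0,T];H^{\mu,q}),\ \nabla v\in C((0,T];H^{\mu,r}),\ u\in C((0,T];H^{\mu,p})\bigr\}
\]
with the obvious supremum-in-$t$ norm, and define the map $\Psi$ sending $(n,\nabla v,u)$ to the right-hand sides above. A mild solution is a fixed point of $\Psi$ in a small closed ball of $X_T$. The estimates combine three ingredients: the $L^p$--$L^q$ bounds for $E_\beta$ and $E_{\beta,\beta}$ acting on Sobolev spaces established in Section~2, the fractional Leibniz (Kato--Ponce) inequality
\[
\|(-\Delta)^{\mu/2}(fg)\|_{L^s}\lesssim \|(-\Delta)^{\mu/2}f\|_{L^{s_1}}\|g\|_{L^{s_2}}+\|f\|_{L^{s_3}}\|(-\Delta)^{\mu/2}g\|_{L^{s_4}},
\]
and the Sobolev embedding $H^{\mu,s}\hookrightarrow L^{ds/(d-\mu s)}$ when $\mu s<d$. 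The four admissible exponent ranges in the theorem correspond to four different ways of placing the derivative $(-\Delta)^{\mu/2}$ and invoking the embedding so that each bilinear term $\nabla\cdot(un)$, $\nabla\cdot(n\nabla v)$, $u\cdot\nabla v$, $(u\cdot\nabla)u$, together with the forcing $n\nabla\phi$ (handled via $\nabla\phi\in L^d$ and Hölder), closes on $X_T$.

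With those pointwise-in-time estimates in hand, the time integral reduces to a Beta-function-type kernel $\int_0^t (t-s)^{\beta-1}s^{-\sigma}\,ds = B(\beta,1-\sigma)\,t^{\beta-\sigma}$; the conditions imposed on $\alpha,\beta,\mu,p,q,r$ in cases (1)--(4) are precisely what is needed to guarantee $\sigma<1$ so the integral converges and a positive power of $T$ appears on the right, making $\Psi$ a self-map of a small ball of $X_T$ and a strict contraction for sufficiently small $T$. Banach's fixed point theorem then gives existence and uniqueness, and continuity in time at $t=0^+$ follows from the strong continuity of $E_\beta$ together with a density argument. The blow-up criterion is obtained by the standard continuation argument: if $T_{\max}<\infty$ and one of the three Sobolev norms stayed bounded as $t\to T_{\max}^-$, restarting the local theory at a time $t_0$ close to $T_{\max}$ with data $(n(t_0),v(t_0),u(t_0))$ would extend the solution beyond $T_{\max}$, contradicting maximality.

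The main obstacle will be the careful bookkeeping of the Kato--Ponce splitting across all four regimes. Unlike Theorem~\ref{thm:localmildexis}, where only one Hölder pair is required per bilinear term, the presence of $(-\Delta)^{\mu/2}$ forces two choices per term, and for the highest regularity range $\alpha-1\le\mu<2\alpha-2$ in case~(4) the Sobolev embedding can no longer push every factor into $L^\infty$; one must estimate one factor directly in $H^{\mu,\cdot}$ and the other in a Lebesgue space via embedding. Verifying in each regime that the time exponent remains strictly greater than $-1$, and that the resulting gain $T^{\beta\eta}$ with some $\eta>0$ is available to drive the contraction, is the delicate piece of work that justifies the precise shape of the admissible exponent ranges.
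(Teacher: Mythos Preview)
Your overall strategy---Banach fixed point in the space $X_{\mu,T}$ built on $H^{\mu,q}\times H^{\mu,r}\times H^{\mu,p}$, using the Kato--Ponce product estimate together with the $L^p$--$L^q$ bounds for the Mittag--Leffler operators---is exactly the route the paper takes, and the contraction and blow-up arguments work as you describe.

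Two points where your proposal diverges from the paper's actual execution are worth flagging. First, the paper absorbs the damping $-\gamma v$ into the operator and works with $E_\beta\bigl(t^\beta(-(-\Delta)^{\alpha/2}-\gamma)\bigr)$ and $E_{\beta,\beta}\bigl(t^\beta(-(-\Delta)^{\alpha/2}-\gamma)\bigr)$ rather than treating $-\gamma v$ as a forcing term; your formulation also closes locally, but the paper's choice avoids having to control $v$ itself. Second, and more substantively, no Sobolev embedding enters the paper's proof at all, and the four exponent regimes do \emph{not} correspond to different Kato--Ponce splittings. The paper uses a single uniform splitting $\|(-\Delta)^{\mu/2}(un)\|_{pq/(p+q)}\lesssim \|u\|_{\dot H^{\mu,p}}\|n\|_q+\|u\|_p\|n\|_{\dot H^{\mu,q}}$ (and analogously for the other bilinear terms) in every case; the resulting $\dot H^{\mu,\cdot}$ bounds are then added to the $L^{\cdot}$ bounds already obtained in Theorem~\ref{thm:localmildexis} via the norm equivalence $\|\cdot\|_{H^{\mu,s}}\simeq\|\cdot\|_s+\|\cdot\|_{\dot H^{\mu,s}}$. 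The only genuinely new constraint beyond those of Theorem~\ref{thm:localmildexis} comes from the term $\int_0^t(t-\tau)^{\beta-1}\|(-\Delta)^{\mu/2}E_{\beta,\beta}(\cdots)P(n\nabla\phi)\|_p\,d\tau$, where the $\mu$ derivatives are placed on the operator (not on the product), producing the time exponent $-\frac{d\beta}{\alpha}(\frac{1}{q}-\frac{1}{p})-\frac{(1+\mu)\beta}{\alpha}+\beta$; requiring this to be positive gives $\frac{1}{q}-\frac{1}{p}<\frac{\alpha-1-\mu}{d}$, and the four cases (1)--(4) are simply the different ways this inequality interacts with the ranges of $q$ already present in Theorem~\ref{thm:localmildexis}. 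So the ``delicate piece of work'' you anticipate in case~(4) is in fact no harder than the other cases.
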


\begin{thm} \label{thm:globalhire}
Assume that $d\geq 2$, $\gamma\geq 0$, $\alpha,\beta,\mu$ and the exponents $p,q,r$ satisfy Assumption \ref{assum2},
the gravitational potential $\phi$ satisfies $\nabla \phi\in L^d(\mathbb{R}^d)$, and there is a constant $\tilde{M}>0$ such that for all $n_0\in L^{\frac{d}{2\alpha-2}}(\mathbb{R}^d), \nabla v_0\in L^{\frac{d}{\alpha-1}}(\mathbb{R}^d)$ and $u_0\in L^{\frac{d}{\alpha-1}}(\mathbb{R}^d)$ with
\begin{equation}\label{hiinitialdata}
\|n_0\|_{\frac{d}{2\alpha-2}}+\|\nabla v_0\|_{\frac{d}{\alpha-1}}+\|u_0\|_{\frac{d}{\alpha-1}}+\|\nabla\phi\|_{d}<\tilde{M},
\end{equation}
then there exists a unique mild solution $(n,v,u)$ of \eqref{chemo-fluid-eq} with the property that
\begin{gather}\label{higlosolu}
\begin{split}
&t^{\frac{d\beta}{\alpha}(\frac{2\alpha-2}{d}-\frac{1}{q})}n\in C((0,\infty);L^q(\mathbb{R}^d)),
~~t^{\frac{d\beta}{\alpha}(\frac{2\alpha-2+\mu}{d}-\frac{1}{q})}n\in C((0,\infty);\dot{H}^{\mu,q}(\mathbb{R}^d)),\\
&t^{\frac{d\beta}{\alpha}(\frac{\alpha-1}{d}-\frac{1}{r})}\nabla v\in C((0,\infty);L^r(\mathbb{R}^d)),
~~t^{\frac{d\beta}{\alpha}(\frac{\alpha-1+\mu}{d}-\frac{1}{r})}\nabla v\in C((0,\infty);\dot{H}^{\mu,r}(\mathbb{R}^d)),\\
&t^{\frac{d\beta}{\alpha}(\frac{\alpha-1}{d}-\frac{1}{p})}u\in C((0,\infty);L^p(\mathbb{R}^d)),
~~t^{\frac{d\beta}{\alpha}(\frac{\alpha-1+\mu}{d}-\frac{1}{p})}u\in C((0,\infty);\dot{H}^{\mu,p}(\mathbb{R}^d)).
\end{split}
\end{gather}
\end{thm}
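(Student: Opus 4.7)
The plan is to adapt the Banach implicit function argument already used in Theorem~\ref{thm:mildexis} to a larger solution space that also carries the homogeneous Sobolev regularity recorded in~(\ref{higlosolu}). I would rewrite~(\ref{chemo-fluid-eq}) in mild form via the Mittag-Leffler operators
\[
\mathcal{E}_\beta(t):=E_\beta\bigl(-t^\beta(-\Delta)^{\alpha/2}\bigr),\qquad \mathcal{F}_\beta(t):=E_{\beta,\beta}\bigl(-t^\beta(-\Delta)^{\alpha/2}\bigr),
\]
so that each of $n,\nabla v,u$ is the sum of $\mathcal{E}_\beta(t)$ applied to its initial datum and a Duhamel convolution of $(t-s)^{\beta-1}\mathcal{F}_\beta(t-s)$ (composed with the Leray projector in the $u$-equation) against the source terms arising from $\nabla\cdot(un)$, $\nabla\cdot(n\nabla v)$, $u\cdot\nabla v$, $\nabla\cdot(u\otimes u)$, $n\nabla\phi$, together with the lower-order linear coupling $\gamma v - n$.

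The working space $X$ is the Banach space of triples $(n,\nabla v,u)$ for which the six time-weighted suprema appearing in~(\ref{higlosolu}) are finite, with norm equal to their sum. The proof then reduces to three estimates: (a) the linear part $\mathcal{E}_\beta(t)(n_0,\nabla v_0,u_0)$ belongs to $X$ with norm dominated by $\|n_0\|_{d/(2\alpha-2)}+\|\nabla v_0\|_{d/(\alpha-1)}+\|u_0\|_{d/(\alpha-1)}$; (b) the Duhamel integrals of the quadratic terms define a bounded bilinear map $B:X\times X\to X$; (c) the Duhamel integral of $n\nabla\phi$ (the sole linear source carrying a small parameter) defines a bounded linear map $L:X\to X$ with $\|L\|\le C\|\nabla\phi\|_d$. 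The key analytic tool is the $L^p$–$\dot{H}^{\mu,q}$ version of the Mittag-Leffler estimates of Section 2, which follows from the $L^p$–$L^q$ bounds by an extra factor $t^{-\mu\beta/\alpha}$ coming from the Fourier multiplier $(-\Delta)^{\mu/2}$. Granted (a)–(c), the abstract Banach implicit function theorem used in~\cite{Kozono2016} produces, whenever $\tilde{M}$ in~(\ref{hiinitialdata}) is small enough, a unique fixed point in a small ball of $X$; this is the sought mild solution and its uniqueness in the regularity class~(\ref{higlosolu}) follows from the contraction.

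The main technical obstacle is the bookkeeping associated with the ten-part Assumption~\ref{assum2}. Schematically, controlling the $\dot{H}^{\mu,q}$ norm of a Duhamel quadratic term by Hölder and the Mittag-Leffler estimates produces integrals of the form
\[
\int_0^t (t-s)^{\beta-1-\frac{\beta}{\alpha}(d\sigma+1+\mu)}\, s^{-\tau}\,ds,
\]
and one needs the integrand to be integrable both near $s=0$ and near $s=t$ while the resulting power of $t$ matches the exact weight prescribed in~(\ref{higlosolu}). As $\mu$ crosses the thresholds $\alpha-1$, $\tfrac{\alpha}{2\beta}-\tfrac{\alpha-1}{2}$, $\tfrac{\alpha}{\beta}-(3\alpha-3)$, and related values, different convergence conditions become binding and the admissible ranges of $p,q,r$ shift accordingly. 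The numerous sub-cases of Assumption~\ref{assum2} are exactly the regions on which all five bilinear estimates and the linear estimate involving $\nabla\phi$ simultaneously close; once this is verified case by case, the contraction argument proceeds verbatim as in Theorem~\ref{thm:mildexis}.
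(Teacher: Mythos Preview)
Your overall architecture---enlarge the solution space $\mathcal{Y}$ of Theorem~\ref{thm:mildexis} to a space $\mathcal{Y}_\mu$ carrying both the $L^p$ and $\dot H^{\mu,p}$ weighted norms, then rerun the implicit function theorem---is exactly what the paper does. The gap is in how you propose to estimate the $\dot H^{\mu}$-norms of the Duhamel bilinear terms.

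You write that the key tool is ``an extra factor $t^{-\mu\beta/\alpha}$ coming from the Fourier multiplier $(-\Delta)^{\mu/2}$'', and your schematic integral
\[
\int_0^t (t-s)^{\beta-1-\frac{\beta}{\alpha}(d\sigma+1+\mu)}\, s^{-\tau}\,ds
\]
confirms that you intend to absorb the full $(-\Delta)^{\mu/2}$ into the Mittag-Leffler kernel and then use only H\"older on the product. This does not close: integrability near $s=t$ then requires $d\sigma+1+\mu<\alpha$, and for the quadratic terms (e.g.\ $u\otimes u$ with $\sigma=1/p$) this forces $p>\tfrac{d}{\alpha-1-\mu}$. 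As soon as $\mu\ge\alpha-1$---which covers all of case~(II) of Assumption~\ref{assum2} and large parts of cases~(III), (VI), (VIII), (X)---no finite $p$ satisfies this, and the integral diverges. Even for $0<\mu<\alpha-1$ your constraint is strictly stronger than what Assumption~\ref{assum2} imposes.

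The missing ingredient is the fractional Leibniz (Kato--Ponce) inequality, recorded in the paper as Lemma~\ref{highregu}:
\[
\|(-\Delta)^{\mu/2}(fg)\|_{p}\le C\bigl(\|f\|_{p_1}\|g\|_{\dot H^{\mu,p_2}}+\|f\|_{\dot H^{\mu,p_3}}\|g\|_{p_4}\bigr).
\]
The paper commutes $(-\Delta)^{\mu/2}$ past $E_{\beta,\beta}$ (both are Fourier multipliers), applies this product estimate to $(-\Delta)^{\mu/2}(un)$, $(-\Delta)^{\mu/2}(n\nabla v)$, etc., and \emph{then} uses the $\mu=0$ Mittag-Leffler bound. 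The effect is that the $(t-s)$-singularity stays exactly as in Theorem~\ref{thm:mildexis} (only $p>\tfrac{d}{\alpha-1}$ is needed there), while the $\mu$-dependence migrates entirely into the $s$-weight through the $\dot H^{\mu}$-norms of the factors. It is precisely this shifted condition---of the form $\tfrac{d\beta}{\alpha}(\tfrac{1}{q}+\tfrac{1}{p})-\tfrac{(3\alpha-3+\mu)\beta}{\alpha}+1>0$---that the many sub-cases of Assumption~\ref{assum2} encode. Your approach of loading $\mu$ onto the kernel is, however, the correct one for the $n\nabla\phi$ term, since $\nabla\phi$ is only in $L^d$ and carries no $\dot H^{\mu}$ regularity; the paper treats that term exactly as you suggest.
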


\begin{thm}\label{thm:localL1mass}
Assume that $d\geq \max\{2,3\alpha-3\}$ and the other conditions in Theorem \ref{thm:localmildexis} hold. Suppose $n_0\in L^1(\mathbb{R}^d)\cap L^{q}(\mathbb{R}^d),~v_0\in L^{1}(\mathbb{R}^d),~\nabla v_0\in L^{r}(\mathbb{R}^d)$ and $u_0\in L^{p}(\mathbb{R}^d)$, then the solution $(n,v,u)$ of \eqref{chemo-fluid-eq} given by Theorem \ref{thm:localmildexis} fulfills
\[
n\in C((0,T];L^1(\mathbb{R}^d)),~v\in C((0,T];L^1(\mathbb{R}^d)).
\]
In addition, the following mass conservation is held to be true
\[
\int_{\mathbb{R}^d}n(x,t)\,dx=\int_{\mathbb{R}^d}n_0(x)\,dx,
\]
\[
\int_{\mathbb{R}^d}v(x,t)\,dx=
\left\{
  \begin{aligned}
  &\int_{\mathbb{R}^d}v_0(x)\,dx+\frac{t^\beta}{\beta\Gamma(\beta)}\int_{\mathbb{R}^d}n_0(x)\,dx, &&\gamma=0,\\
  &E_\beta(-\gamma t^\beta)\int_{\mathbb{R}^d}v_0(x)\,dx+\frac{1-E_\beta(-\gamma t^\beta)}{\gamma\Gamma(\beta)}\int_{\mathbb{R}^d}n_0(x)\,dx, &&\gamma>0.
  \end{aligned}
\right.
\]
\end{thm}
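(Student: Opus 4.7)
The plan is: (1) upgrade $n$ and $v$ to $C((0,T];L^1(\mathbb{R}^d))$, (2) integrate the mild formula for $n$ over $\mathbb{R}^d$ and exploit mass-preservation of the Mittag-Leffler operators together with the divergence structure of the nonlinearities, and (3) do the same for $v$, reducing its integral to a scalar Mittag-Leffler identity that can be computed in closed form.

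For Step 1, the mild representations are
\[
n(t) = E_\beta(-t^\beta(-\Delta)^{\alpha/2})n_0 - \int_0^t (t-s)^{\beta-1} E_{\beta,\beta}(-(t-s)^\beta(-\Delta)^{\alpha/2}) \nabla\cdot(un+n\nabla v)(s)\,ds,
\]
\[
v(t) = E_\beta(-t^\beta A_\gamma)v_0 + \int_0^t (t-s)^{\beta-1} E_{\beta,\beta}(-(t-s)^\beta A_\gamma)\bigl(n - \nabla\cdot(uv)\bigr)(s)\,ds,
\]
where $A_\gamma := (-\Delta)^{\alpha/2}+\gamma$ and I have used $u\cdot\nabla v=\nabla\cdot(uv)$ since $\nabla\cdot u=0$. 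Applying the $L^p$-$L^q$ smoothing estimates from Section~2 to the operator $E_{\beta,\beta}(-(t-s)^\beta(-\Delta)^{\alpha/2})\nabla\cdot$, combined with H\"older's inequality on $un\in L^{pq/(p+q)}$ and $n\nabla v\in L^{qr/(q+r)}$ (with an analogous treatment of $uv$), yields $L^1$ bounds with time weights integrable on $[0,t]$. The hypothesis $d\geq 3\alpha-3$ is the feasibility condition ensuring that the target exponent $1$ lies inside the admissible window of those smoothing estimates given the available source regularities; strong continuity in $t$ then follows as in the arguments of Section~\ref{sec3}.

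For Steps 2 and 3, I integrate the mild formulas in $x$ (Fubini being justified by Step~1). The subordination representation of $E_\beta$ and $E_{\beta,\beta}$ against the $\alpha$-stable heat semigroup, whose kernel is a probability density, gives $\int E_\beta(-t^\beta(-\Delta)^{\alpha/2})f\,dx=\int f\,dx$, and likewise for $E_{\beta,\beta}$; the divergence terms integrate to zero (via a cutoff $\chi_R\to 1$ if strict rigour is demanded, combined with the decay of the Mittag-Leffler kernel). This immediately yields $\int n(\cdot,t)\,dx=\int n_0\,dx$, and then, inserting this conservation into the $v$-equation,
\[
\int v(x,t)\,dx = E_\beta(-\gamma t^\beta)\int v_0\,dx + \Bigl(\int n_0\,dx\Bigr)\int_0^t (t-s)^{\beta-1} E_{\beta,\beta}(-\gamma(t-s)^\beta)\,ds.
\]
Expanding $E_{\beta,\beta}$ in its power series and integrating termwise evaluates the remaining scalar integral to the coefficient $\frac{1-E_\beta(-\gamma t^\beta)}{\gamma\Gamma(\beta)}$ when $\gamma>0$, and to $\frac{t^\beta}{\beta\Gamma(\beta)}$ when $\gamma=0$ (by letting $\gamma\downarrow 0$), matching the stated identities.

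\textbf{Main obstacle.} The hardest step is the endpoint $L^1$ estimate in Step~1: $L^1$ is critical for the Mittag-Leffler smoothing, so the $L^p$-$L^q$ arguments used in Section~\ref{sec3} lose their slack and the H\"older/interpolation exponents must be chosen on the boundary of admissibility. The condition $d\geq 3\alpha-3$ is precisely what keeps that boundary nonempty. A secondary technical nuisance is the rigorous commutation of $\int_{\mathbb{R}^d}dx$ with the Mittag-Leffler operators and with $\nabla\cdot$, which I handle by the subordination formula together with a cutoff-approximation limit.
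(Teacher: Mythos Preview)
Your Steps~2 and~3 are essentially correct and match the paper's strategy: once $n,v\in C((0,T];L^1)$, integrating the mild formulas and using that the $\alpha$-stable heat kernel (hence the Mittag-Leffler operators via subordination) preserves mass, together with the divergence structure, gives the conservation identities. Your power-series evaluation of the scalar integral in the $\gamma>0$ case is clean and actually more transparent than the paper's computation.

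The gap is in Step~1. Your plan is to take the solution already produced by Theorem~\ref{thm:localmildexis} and bound the Duhamel integral directly in $L^1$ using the smoothing estimate for $E_{\beta,\beta}(-(t-s)^\beta(-\Delta)^{\alpha/2})\nabla\cdot$ applied to $un\in L^{pq/(p+q)}$. But the $L^p$--$L^q$ estimates of Proposition~\ref{pro:MLopLpqesti} only go \emph{upward} in the Lebesgue exponent (they require the source exponent to be no larger than the target). Under the hypotheses of Theorem~\ref{thm:localmildexis} one has $\frac{1}{p}<\frac{\alpha-1}{d}$ and $\frac{1}{q}<\frac{2\alpha-2}{d}$, so $\frac{1}{p}+\frac{1}{q}<\frac{3\alpha-3}{d}\le 1$; thus $pq/(p+q)>1$ and the map $L^{pq/(p+q)}\to L^1$ is simply not available. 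The same obstruction hits the $uv$ term in the $v$-equation, and there you have a second problem: from Theorem~\ref{thm:localmildexis} you only control $\nabla v$ in $L^r$, not $v$ itself in any Lebesgue space, so you cannot even place $uv$ in a useful $L^s$.

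What the paper does instead is to \emph{re-run the fixed-point argument} in the enlarged space
\[
X^1_T=\{(n,v,u):\ n\in C((0,T];L^1\cap L^q),\ v\in C((0,T];L^1),\ \nabla v\in C((0,T];L^r),\ u\in C((0,T];L^p)\},
\]
so that $\|n\|_1$ and $\|v\|_1$ are part of the norm from the outset. Then one stays at the $L^1$ level throughout: $\|un\|_1\le\|u\|_p\|n\|_{p'}$ with $\|n\|_{p'}\le\|n\|_1^{1-\theta}\|n\|_q^{\theta}$ by interpolation, and similarly $\|uv\|_1\le\|u\|_p\|v\|_{p'}$ with a Gagliardo--Nirenberg interpolation between $\|v\|_1$ and $\|\nabla v\|_r$. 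The role of the assumption $d\ge 3\alpha-3$ is precisely to guarantee $\frac{1}{p}+\frac{1}{q}\le 1$, i.e.\ $p'\in[1,q]$, so that this interpolation is legitimate---not, as you suggested, to widen a smoothing window down to the target exponent~$1$. Uniqueness in $X_T$ then identifies the new fixed point with the solution from Theorem~\ref{thm:localmildexis}.
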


\begin{thm}\label{self-similarsolu}
Let $d\geq 3$ and $\gamma=0$. Assume that $( n_0,v_0,u_0)$ and gravitational potential $\phi$ are as in Theorem \ref{thm:mildexis}. Suppose that $[ n_0,v_0,u_0,\phi]$ are homogeneous functions with degree $-(2\alpha-2),0,-(\alpha-1),0$, respectively, i.e. for any $x\in \mathbb{R}^d, \lambda>0$,
\begin{gather}\label{inida}
\lambda^{2\alpha-2}n_0(\lambda x)=n_0(x),~v_0(\lambda x)=v_0(x),~\lambda^{\alpha-1}u_0(\lambda x)=u_0(x),~\phi(\lambda x)=\phi (x).
\end{gather}
If $[ n_0,v_0,u_0,\phi]$ satisfies \eqref{initialdata}, then the solution $[n,v,u]$ given in Theorem \ref{thm:mildexis} is a self-similar one, that is, for all $x\in \mathbb{R}^d$, $t>0$ and $\lambda>0$, the following holds,
\begin{gather}\label{selfsolu}
\lambda^{2\alpha-2}n(\lambda x,\lambda^{\frac{\alpha}{\beta}}t)=n(x,t),~v(\lambda x,\lambda^{\frac{\alpha}{\beta}}t)=v(x,t),~\lambda^{\alpha-1}u(\lambda x,\lambda^{\frac{\alpha}{\beta}}t)=u(x,t).
\end{gather}
\end{thm}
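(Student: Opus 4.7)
The plan is to exploit the scaling covariance of the mild formulation together with the uniqueness supplied by Theorem \ref{thm:mildexis}. For each $\lambda>0$ introduce the scaling operator
\[
\mathcal{T}_\lambda(n,v,u)(x,t):=\bigl(\lambda^{2\alpha-2}n(\lambda x,\lambda^{\alpha/\beta}t),\;v(\lambda x,\lambda^{\alpha/\beta}t),\;\lambda^{\alpha-1}u(\lambda x,\lambda^{\alpha/\beta}t)\bigr),
\]
and show that $\mathcal{T}_\lambda$ sends the mild solution of \eqref{chemo-fluid-eq} with data $(n_0,v_0,u_0,\phi)$ to a mild solution of \eqref{chemo-fluid-eq} with data $\bigl(\lambda^{2\alpha-2}n_0(\lambda\cdot),\,v_0(\lambda\cdot),\,\lambda^{\alpha-1}u_0(\lambda\cdot),\,\phi(\lambda\cdot)\bigr)$. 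This step is a term-by-term calculation on the integral representation. The key ingredients are: the convolution kernels $K_\beta$ and $K_{\beta,\beta}$ of $E_\beta(-t^\beta(-\Delta)^{\alpha/2})$ and $E_{\beta,\beta}(-t^\beta(-\Delta)^{\alpha/2})$ obey the self-similarity $K(\lambda x,\lambda^{\alpha/\beta}t)=\lambda^{-d}K(x,t)$, which one reads directly from their Fourier representations via the substitution $\eta=t^{\beta/\alpha}\xi$; the Helmholtz projection commutes with dilations; and the substitutions $y=\lambda z$, $s=\lambda^{\alpha/\beta}s'$ in each Duhamel integral supply precisely the Jacobian factors that cancel the $\lambda$-powers produced by $\nabla$, by $(-\Delta)^{\alpha/2}$, by the time weight $(t-s)^{\beta-1}$, and by the nonlinearities $u\cdot\nabla n$, $\nabla\cdot(n\nabla v)$, $u\cdot\nabla v$, $(u\cdot\nabla)u$ and $n\nabla\phi$.

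The second step invokes the homogeneity hypothesis \eqref{inida}: the rescaled data $\lambda^{2\alpha-2}n_0(\lambda\cdot)$, $v_0(\lambda\cdot)$, $\lambda^{\alpha-1}u_0(\lambda\cdot)$, $\phi(\lambda\cdot)$ literally coincide with $n_0,v_0,u_0,\phi$. Hence $\mathcal{T}_\lambda(n,v,u)$ is a mild solution of \eqref{chemo-fluid-eq} with exactly the same initial data and the same gravitational potential as $(n,v,u)$ itself.

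The third step checks that $\mathcal{T}_\lambda(n,v,u)$ belongs to the very same solution class as $(n,v,u)$ in Theorem \ref{thm:mildexis}: a short substitution gives
\[
t^{\frac{d\beta}{\alpha}\left(\frac{2\alpha-2}{d}-\frac{1}{q}\right)}\|\mathcal{T}_\lambda n(\cdot,t)\|_q
= t^{\frac{d\beta}{\alpha}\left(\frac{2\alpha-2}{d}-\frac{1}{q}\right)}\|n(\cdot,t)\|_q,
\]
and analogous identities for the $L^r$-norm of $\nabla v$ and the $L^p$-norm of $u$; also $\|\lambda^{2\alpha-2}n_0(\lambda\cdot)\|_{d/(2\alpha-2)}=\|n_0\|_{d/(2\alpha-2)}$ and the corresponding equalities for $\nabla v_0,u_0,\nabla\phi$, so the smallness condition \eqref{initialdata} is automatically inherited by $\mathcal{T}_\lambda(n,v,u)$. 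Uniqueness in Theorem \ref{thm:mildexis} then forces $\mathcal{T}_\lambda(n,v,u)=(n,v,u)$ for every $\lambda>0$, which is precisely \eqref{selfsolu}.

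The main technical obstacle is the bookkeeping in the first step: after the substitutions $y=\lambda z$ and $s=\lambda^{\alpha/\beta}s'$ inside each Duhamel integral, the factor $\lambda^\alpha$ from dilating $(-\Delta)^{\alpha/2}$, the factor $\lambda^{\alpha(\beta-1)/\beta+\alpha/\beta}=\lambda^\alpha$ from the time weight, the factor $\lambda^{-d}$ from the Mittag-Leffler kernel, and the various $\lambda$-powers produced by the dilated arguments of $n$, $\nabla v$, $u$ and $\nabla\phi$ must be shown to balance exactly with the $\lambda^{2\alpha-2}$, $\lambda^{0}$ and $\lambda^{\alpha-1}$ prefactors of $\mathcal{T}_\lambda$ in the three equations respectively. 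Particular care is needed for the bilinear terms $\nabla\cdot(n\nabla v)$ and $(u\cdot\nabla)u$ and for the external force $n\nabla\phi$; once this cancellation is carried out separately in the $n$-, $v$- and $u$-integral equations, the remaining two steps are immediate.
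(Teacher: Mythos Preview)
Your proposal is correct and takes a genuinely different route from the paper. The paper constructs the Picard iterates $(n_j,v_j,u_j)$ starting from the Mittag--Leffler flow of the initial data, verifies by induction that each iterate satisfies the scaling identities \eqref{selfsolu}, and then passes to the limit in $\mathcal{Y}$. Your argument instead applies the scaling operator $\mathcal{T}_\lambda$ directly to the full mild solution and invokes the uniqueness part of Theorem~\ref{thm:mildexis}. This is cleaner: it bypasses the inductive bookkeeping and, more importantly, it does not rely on the claim that the Picard sequence actually converges to the mild solution---a claim the paper uses without justification, since Theorem~\ref{thm:mildexis} is proved via the implicit function theorem rather than a contraction/iteration scheme. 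Your route thus closes a small gap that the paper's proof leaves open. Conversely, the paper's approach has the modest advantage that it does not need to appeal explicitly to uniqueness in the small ball $\mathcal{Y}_M$.

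One small correction: the displayed identity
\[
t^{\frac{d\beta}{\alpha}\left(\frac{2\alpha-2}{d}-\frac{1}{q}\right)}\|\mathcal{T}_\lambda n(\cdot,t)\|_q
= t^{\frac{d\beta}{\alpha}\left(\frac{2\alpha-2}{d}-\frac{1}{q}\right)}\|n(\cdot,t)\|_q
\]
is not literally true pointwise in $t$; the right-hand side should carry $\tau=\lambda^{\alpha/\beta}t$ in place of $t$. What you actually need---and what your substitution gives---is that the supremum over $t>0$ on the left equals the supremum over $\tau>0$ on the right, so that $\|\mathcal{T}_\lambda(n,v,u)\|_{\mathcal{Y}}=\|(n,v,u)\|_{\mathcal{Y}}$. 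This is enough to place $\mathcal{T}_\lambda(n,v,u)$ in the uniqueness class, and the rest of your argument goes through unchanged.
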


\section{Notations and Preliminaries}\label{sec2}
The purpose of this section is to state some notions and functional spaces, to introduce briefly the definition of Caputo derivative (provided in \cite{LJLiu1,LJLiu2}), then to define the mild solution to system \eqref{chemo-fluid-eq} and to recall some results which will be used later.

For $1<p<\infty$, $\|u\|_p$ denotes the $L^p$-norm of the function $u$ in $L^p(\mathbb{R}^d)$ space. The space $C((0,\infty);X)$ stands for the set of continuous functions on $(0,\infty)$ with values in Banach space $X$.
For $\mu\in \mathbb{R}, $ $1<p<\infty$, the fractional nonhomogeneous Sobolev spaces and fractional homogeneous Sobolev spaces are defined as those introduce in \cite{GuoBL}:
\[
\begin{split}
H^{\mu,p}(\mathbb{R}^n)&=\{u\in L^p(\mathbb{R}^n): \mathcal{F}^{-1}((1+|\xi|^2)^{\frac{\mu}{2}}\hat{u}) \in L^p(\mathbb{R}^n)\},\\
\dot{H}^{\mu,p}(\mathbb{R}^n)&=\{u\in L^p(\mathbb{R}^n): \mathcal{F}^{-1}(|\xi|^{\mu}\hat{u}) \in L^p(\mathbb{R}^n)\},\\
\end{split}
\]
where $\mathcal{F}$ is the Fourier transform and the corresponding norms are given by
\[
\begin{split}
\|u\|_{H^{\mu,p}}:&=\|\mathcal{F}^{-1}((1+|\xi|^2)^{\frac{\mu}{2}}\hat{u})\|_p=\|(I-\Delta)^{\frac{\mu}{2}}u\|_p,\\
\|u\|_{\dot{H}^{\mu,p}}:&=\|\mathcal{F}^{-1}(|\xi|^{\mu}\hat{u})\|_p=\|(-\Delta)^{\frac{\mu}{2}}u\|_p.
\end{split}
\]
\begin{lem}\label{equiregu}\cite{GuoBL}
Let $\mu\geq0$, $1< p< \infty$. Then $f\in H^{\mu,p}(\mathbb{R}^d)$  if and only if $f\in L^p(\mathbb{R}^d)$ and $(-\Delta)^{\frac{\mu}{2}}f\in L^p(\mathbb{R}^d)$, that is the norms $\|f\|_{H^{\mu,p}}$ and $\|f\|_p+\|f\|_{\dot{H}^{\mu,p}}$ are equivalent.
\end{lem}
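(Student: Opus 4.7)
\medskip
\noindent\textbf{Proof plan.} The assertion is the classical equivalence between the Bessel potential norm and the sum of the $L^p$ norm and the Riesz potential norm, and the natural route is through Fourier multipliers. Let me write $J^{\mu}=(I-\Delta)^{\mu/2}$ and $D^{\mu}=(-\Delta)^{\mu/2}$, so that by definition
\[
\|f\|_{H^{\mu,p}}=\|J^{\mu}f\|_{p},\qquad \|f\|_{p}+\|f\|_{\dot H^{\mu,p}}=\|f\|_{p}+\|D^{\mu}f\|_{p}.
\]
I would reduce the whole claim to showing that the two Fourier symbols
\[
m_{1}(\xi)=\frac{(1+|\xi|^{2})^{\mu/2}}{1+|\xi|^{\mu}},\qquad m_{2}(\xi)=\frac{1+|\xi|^{\mu}}{(1+|\xi|^{2})^{\mu/2}}
\]
define bounded Fourier multipliers on $L^{p}(\mathbb R^{d})$ for every $1<p<\infty$.

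The first step is to verify that both $m_{1}$ and $m_{2}$ are smooth away from the origin and that each of them, together with all its derivatives, satisfies the H\"ormander--Mikhlin condition
\[
|\partial^{\alpha}m_{j}(\xi)|\le C_{\alpha}\,|\xi|^{-|\alpha|},\qquad \xi\ne 0,
\]
for every multi-index $\alpha$ with $|\alpha|\le \lfloor d/2\rfloor+1$. Indeed, both the numerator and denominator of $m_{j}$ are radial, strictly positive, comparable as $|\xi|\to 0$ (both tend to $1$) and as $|\xi|\to\infty$ (both are comparable to $|\xi|^{\mu}$), and derivatives gain a power of $|\xi|^{-1}$ at infinity while staying smooth at the origin thanks to the $+1$ regularization. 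These are routine bounds that I would verify separately in the regions $|\xi|\le 1$ and $|\xi|\ge 1$.

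Once the Mikhlin condition is established, the Mikhlin multiplier theorem furnishes constants $C_{1},C_{2}>0$ such that $\|m_{j}(D)g\|_{p}\le C_{j}\|g\|_{p}$ for all $g\in L^{p}(\mathbb R^{d})$ and $1<p<\infty$. Applying this with $g=(I+D^{\mu})f$ and $g=J^{\mu}f$ respectively, and noting the symbol identities $J^{\mu}=m_{1}(D)\circ(I+D^{\mu})$ and $I+D^{\mu}=m_{2}(D)\circ J^{\mu}$ on the Schwartz class, I get
\[
\|J^{\mu}f\|_{p}\le C_{1}\bigl(\|f\|_{p}+\|D^{\mu}f\|_{p}\bigr),\qquad \|f\|_{p}+\|D^{\mu}f\|_{p}\le 2 C_{2}\|J^{\mu}f\|_{p},
\]
which is exactly the required norm equivalence and immediately yields the ``if and only if'' statement by density of $\mathcal{S}(\mathbb R^{d})$ in both spaces.

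The only subtle point is the careful Mikhlin estimate for $m_{1}$ and $m_{2}$ when $\mu$ is small (then $|\xi|^{\mu}$ is not smooth at the origin, but $1+|\xi|^{\mu}$ is still H\"older, and the ratio remains smooth on $\{|\xi|\ge 1\}$ while near the origin both symbols are $1+O(|\xi|^{\mu})$). I would handle this by a dyadic/Littlewood--Paley splitting, checking the symbol bounds on each annulus $\{2^{k}\le|\xi|\le 2^{k+1}\}$; this is the main technical obstacle, but it is entirely standard and is precisely the content of the cited reference \cite{GuoBL}.
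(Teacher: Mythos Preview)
The paper does not supply its own proof of this lemma; it is quoted directly from the cited reference \cite{GuoBL} as a known fact, so there is no ``paper's proof'' to compare against. Your approach via the H\"ormander--Mikhlin multiplier theorem applied to the symbols $m_1(\xi)=(1+|\xi|^2)^{\mu/2}/(1+|\xi|^\mu)$ and $m_2(\xi)=(1+|\xi|^\mu)/(1+|\xi|^2)^{\mu/2}$ is the standard and correct route to this classical equivalence, and your identification of the only delicate point (the behavior of $|\xi|^\mu$ near the origin for non-integer $\mu$) is accurate; since the Mikhlin condition $|\partial^\alpha m_j(\xi)|\le C_\alpha|\xi|^{-|\alpha|}$ becomes \emph{less} restrictive as $|\xi|\to 0$, the bound $|\partial^\alpha|\xi|^\mu|=O(|\xi|^{\mu-|\alpha|})$ is indeed sufficient there.
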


In order to introduce the generalized definition of Caputo derivative, let us first recall the following definition of limit.
\begin{defn}\label{def:limit}\cite{LJLiu1}
Let $B$ be a Banach space. For a function $u\in L^{1}_{loc}((0,T);B)$, if there exists $u_0\in B$ such that
\begin{gather}
\lim_{t\to 0^+}\frac{1}{t}\int_0^t\|u(s)-u_0\|_Bds=0,
\end{gather}
we call $u_0$ the right limit of $u$ at $t=0$, denoted by $u(0+)=u_0$. Similarly, we define $u(T-)$, the left limit of $u$ at $t=T$, to be the constant $u_T\in B$ such that
\begin{gather}
\lim_{t\to T^-}\frac{1}{T-t}\int_{t}^T\|u(s)-u_T\|_B\,ds=0.
\end{gather}
\end{defn}
For $\beta>-1$, as discussed in \cite{LJLiu1}, we use the following distributions $\{g_\beta\}$ as the convolution kernels
\begin{gather}\label{kernelg}
g_{\beta}(t) :=\displaystyle
\begin{cases}
\frac{\theta(t)}{\Gamma(\beta)}t^{\beta-1},& \beta>0,\\
\delta(t), &\beta=0,\\
\frac{1}{\Gamma(1+\beta)}D\left(\theta(t)t^{\beta}\right), & \beta\in (-1, 0),
\end{cases}
\end{gather}
here $\theta(t)$ is the standard Heaviside step function and $D$ represents the distributional derivative. $g_{\beta}$ can also be defined for $\beta\le -1$ (see \cite{LJLiu1}) 
and consequently
\begin{gather}\label{eq:group}
g_{\beta_1}*g_{\beta_2}=g_{\beta_1+\beta_2},~~\forall \beta_1,\beta_2\in\mathbb{R}.
\end{gather}
Correspondingly, we can introduce the following time-reflected group
\begin{gather*}
\tilde{\mathscr{C}} :=\{\tilde{g}_{\alpha}: \tilde{g}_{\alpha}(t)=g_{\alpha}(-t), \alpha\in \mathbb{R}\}.
\end{gather*}
Clearly, supp $\tilde{g}\subset(-\infty, 0]$ and for $\gamma\in (0, 1)$, the following equality is held to be true
\begin{gather}
\tilde{g}_{-\gamma}(t)=-\frac{1}{\Gamma(1-\gamma)}D(\theta(-t)(-t)^{-\gamma})=-D\tilde{g}_{1-\gamma}(t).
\end{gather}

\begin{defn}\label{def:rightcaputo}\cite{LJLiu1}
Let $0<\beta<1$. Consider $u\in L^1_{loc} ([0,T);\mathbb{R})$ such that $u$ has a right limit $u(0+)$ at $t=0$ in the sense of Definition \ref{def:limit}. The $\beta$-th order Caputo derivative of $u$, a distribution in $\mathscr{D}'(\mathbb{R})$ with support in $[0,T)$, is defined by
\[
D^\beta_c u:=J_{-\beta}u-u_0g_{1-\beta}=g_{-\beta}\ast(\theta(t)(u-u_0)),
\]
where $J_\alpha$ denotes the fractional integral operator
\begin{equation}\label{frac-inte-opera}
J_\alpha u(t)=\frac{1}{\Gamma(\alpha)}\int_0^t(t-s)^{\alpha-1}u(s)ds.
\end{equation}
Similarly, the $\beta$-th order right Caputo derivative of $u$ is a distribution in $\mathscr{D}'(\mathbb{R})$ with support in $(-\infty, T]$, given by
\[
\tilde{D}_{c;T}^{\beta}u :=\tilde{g}_{-\beta}*\big(\theta(T-t)(u(t)-u(T-))\big).
\]
\end{defn}

Now we state the definition of Caputo derivatives for mappings into Banach spaces.
\begin{defn}\label{def:weakcap}\cite{LJLiu1}
Let $B$ be a Banach space and $u\in L^{1}_{loc}((0,T);B)$. Let $u_0\in B$.
Define the weak Caputo derivative of $u$ associated with initial data $u_0$ to be $_0^cD_t^{\beta}u\in \mathscr{D}'$ such that for any test function $\varphi \in
C_c^{\infty}((-\infty, T); \mathbb{R}) $,
\begin{gather}
\langle _0^cD_t^{\beta}u, \varphi \rangle :=\int_{-\infty}^T (u-u_0)\theta(t) (\tilde{D}_{c; T}^{\beta}\varphi)\, dt
=\int_0^T (u-u_0)\tilde{D}_{c; T}^{\beta}\varphi \, dt,
\end{gather}
where $\mathscr{D}'=\{v|v:C_c^\infty\big((-\infty,T);\mathbb{R}\big)\rightarrow B$ is a bounded linear operator\}.
We call the weak Caputo derivative $_0^cD_t^{\beta}u$ associated with initial value $u_0$ the Caputo derivative of $u$ if $u(0+)=u_0$ in the sense of Definition \ref{def:limit} under the norm of the underlying Banach space $B$.
\end{defn}

In the sequel, we introduce three special functions, Wright function $W_{\kappa,\lambda}$, Marnardi function $M_\beta$ and Mittag-Leffler function $E_{\beta,\gamma}$, respectively. Let $\kappa>-1$ and $\lambda\in \mathbb{C}$. The Wright function $W_{\kappa,\lambda}$ is defined by the following complex series representation and convergent in the whole complex plane
\[
W_{\kappa,\lambda}(z):=\sum_{j=0}^{\infty}\frac{z^j}{j!\Gamma(\kappa j+\lambda)}.
\]
Mainardi function $M_\beta:\mathbb{C}\rightarrow \mathbb{C} $ is a particular case of the Wright function and is given by
\[
M_\beta(z):=W_{-\beta,1-\beta}(-z), ~~~z\in \mathbb{C}.
\]
For real numbers $\beta$ and $\gamma$, the Mittag-Leffler function $E_{\beta,\gamma}: \mathbb{C}\to \mathbb{C}$ is defined as follows
\begin{equation*}
E_{\beta,\gamma}(z):=\sum_{j=0}^{\infty} \frac{z^j}{\Gamma(j\beta+\gamma)},~~~~E_{\beta}(z):=E_{\beta,1}(z).
\end{equation*}

The following classical result plays an important role to obtain the structural estimates for the Mittag-Leffler operators $\{E_\beta(t^\beta\cdot):t>0\}$ and $\{E_{\beta,\beta}(t^\beta\cdot):t>0 \}$.
\begin{lem}\label{lemMbeta(s)}\cite{de}
For $0<\beta<1$ and $-1<r<\infty$, when we restrict $M_\beta$ to the positive real line, it holds that
\begin{align*}
M_\beta(t)\geq0 ~\mbox{for all } t\geq0, ~~\mbox{and}~~ \int_0^\infty t^rM_\beta(t)dt=\frac{\Gamma(1+r)}{\Gamma(1+\beta r)}.
\end{align*}
\end{lem}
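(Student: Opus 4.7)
My plan is to establish the two assertions separately, first the pointwise non-negativity of $M_\beta$ on the positive real axis and then the moment identity.

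For non-negativity, I would use the Hankel contour representation of the reciprocal Gamma function, $1/\Gamma(z) = \frac{1}{2\pi i}\int_{Ha} e^\zeta \zeta^{-z}\,d\zeta$, inside the defining series
\begin{equation*}
M_\beta(t) = \sum_{j=0}^\infty \frac{(-t)^j}{j!\,\Gamma(1-\beta(j+1))}.
\end{equation*}
Swapping sum and integral (justified by uniform bounds on the Hankel contour) and summing the resulting exponential series in $j$ leads to the Mainardi integral representation
\begin{equation*}
M_\beta(t) = \frac{1}{2\pi i}\int_{Ha} e^{\zeta - t\zeta^\beta}\, \zeta^{\beta-1}\,d\zeta.
\end{equation*}
Deforming $Ha$ to wrap tightly around the negative real axis and parametrizing $\zeta = r e^{\pm i\pi}$, the contributions from the two sides of the cut combine into a manifestly non-negative real integral, yielding $M_\beta(t)\ge0$ for all $t\ge0$.

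For the moment formula, I would first establish the Laplace identity
\begin{equation*}
\int_0^\infty e^{-\lambda t} M_\beta(t)\,dt = E_\beta(-\lambda),\qquad \lambda\ge0,
\end{equation*}
by expanding $e^{-\lambda t}$ as a Taylor series and integrating termwise against the series for $M_\beta$ (justified by $M_\beta\ge0$ together with Fubini), then matching with the defining series of $E_\beta$. Next I would take the Mellin transform in $\lambda$: for $0<\mathrm{Re}(s)<1$, multiplying the Laplace identity by $\lambda^{s-1}$ and interchanging the order of integration gives
\begin{equation*}
\Gamma(s)\int_0^\infty t^{-s} M_\beta(t)\,dt = \int_0^\infty \lambda^{s-1} E_\beta(-\lambda)\,d\lambda.
\end{equation*}
The right-hand side evaluates to $\Gamma(s)\Gamma(1-s)/\Gamma(1-\beta s)$ via the standard Hankel-integral computation for the Mellin transform of the Mittag-Leffler function: substitute the Hankel representation of $1/\Gamma(\beta k+1)$ into the series of $E_\beta$, sum the resulting geometric series in $k$, and evaluate the residual Hankel integral using $\pi/\sin(\pi s)=\Gamma(s)\Gamma(1-s)$. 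Dividing both sides by $\Gamma(s)$ and setting $r=-s$ produces the moment formula on the strip $-1<r<0$.

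To cover the remaining range $r\ge0$, I would invoke analytic continuation. Since $M_\beta$ decays faster than any polynomial at infinity, the map $r\mapsto \int_0^\infty t^r M_\beta(t)\,dt$ is entire, and the right-hand side $\Gamma(1+r)/\Gamma(1+\beta r)$ has no singularities for $r>-1$ (since $1+\beta r>1-\beta>0$), so identity on the strip $-1<r<0$ propagates to all of $-1<r<\infty$. The main obstacle here is the contour-deformation step used to exhibit non-negativity of $M_\beta$: one must carefully track how the oscillatory contributions from the two edges of the branch cut of $\zeta^\beta$ combine into a non-negative integrand. Once non-negativity is secured, the Mellin--Laplace calculation for the moments is essentially a bookkeeping exercise in classical special-function calculus.
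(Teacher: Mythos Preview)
The paper does not supply a proof of this lemma; it is quoted from the literature (the citation \cite{de}) and used as a black box in the subsequent $L^p$--$L^q$ estimates. There is therefore nothing in the paper to compare your attempt against.

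That said, your sketch has two genuine soft spots worth flagging. First, after collapsing the Hankel contour onto the negative real axis, the resulting real integral for $M_\beta(t)$ carries an oscillatory factor of the form $\sin\!\big(\pi\beta - t r^\beta\sin(\pi\beta)\big)$ and is \emph{not} manifestly non-negative; the sign is genuinely hidden. The standard route to $M_\beta\ge0$ instead goes through Pollard's theorem that $s\mapsto E_\beta(-s)$ is completely monotone on $[0,\infty)$ together with Bernstein's representation theorem, or equivalently through identifying $M_\beta$ with the density of an inverse $\beta$-stable subordinator. Second, your justification of the Laplace identity $\int_0^\infty e^{-\lambda t}M_\beta(t)\,dt=E_\beta(-\lambda)$ by ``expanding $e^{-\lambda t}$ and integrating termwise'' is circular: integrating $t^k$ against $M_\beta$ is exactly the moment you are trying to compute; and if instead you Laplace-transform the power series of $M_\beta$ term by term you obtain only the \emph{divergent} asymptotic expansion of $E_\beta(-\lambda)$ for large $\lambda$, so Fubini does not apply. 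A clean fix is to start from the Hankel representation you already derived, insert $\int_0^\infty e^{-\lambda t}e^{-t\zeta^\beta}\,dt=(\lambda+\zeta^\beta)^{-1}$ under the contour integral, and recognise the result as the standard Hankel formula for $E_\beta(-\lambda)$. Once the Laplace identity and non-negativity are secured by these alternative arguments, your Mellin-transform extraction of the moments on $-1<r<0$ and the analytic continuation to $r\ge0$ go through as written.
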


In what follows, we list some results about the fractional heat semigroup $\{e^{-t(-\Delta)^{\frac{\alpha}{2}}}\}_{t>0}$,
which is the convolution operator with the fractional heat kernel $K_t(x)$ and denoted by
\begin{equation}\label{Kalphat}
e^{-t(-\Delta)^{\frac{\alpha}{2}}}f:=K^\alpha(t)f=K_t\ast f,
\end{equation}
where $K_t(x)$ is defined via the Fourier transform
\begin{equation}\label{ktx}
K_t(x)=\mathcal{F}^{-1}(e^{-t|\xi|^\alpha})=t^{-\frac{d}{\alpha}}K(t^{-\frac{1}{\alpha}}x)~~~\mbox{with}~~ K(x)=(2\pi)^{-d}\int_{\mathbb{R}^d}e^{ix\cdot\xi}e^{-|\xi|^\alpha}\,d\xi.
\end{equation}
In \cite{bonforte2016}, since $e^{-t|\xi|^\alpha}$ is a tempered distribution, it holds that $K_t\in C^\infty((0,\infty)\times \mathbb{R}^d)$. In addition, we define $K^\alpha_\gamma(t)=e^{-t\gamma}K^\alpha(t)$ as the damped fractional heat semigroup.

\begin{lem}\label{estiKtx}\cite{MiaoYuanZhang}
For any $x\in \mathbb{R}^d$, $0<t<\infty$, $\alpha>0$, $\mu>0$ and  $1\leq p\leq \infty$, the kernel function $K(x)$ has the following properties:
\begin{enumerate}[(i)]
\item
$|K(x)|\leq C(1+|x|)^{-d-\alpha}$,  $K(x)\in L^p(\mathbb{R}^d)$ and $K_t(x)\in L^p(\mathbb{R}^d)$,
\item
$|(-\Delta)^{\frac{\mu}{2}}K(x)|\leq C(1+|x|)^{-d-\mu}$,
$(-\Delta)^{\frac{\mu}{2}}K(x)\in L^p(\mathbb{R}^d)$ and $(-\Delta)^{\frac{\mu}{2}}K_t(x)\in L^p(\mathbb{R}^d)$,
\item
$|\nabla K(x)|\leq C(1+|x|)^{-d-1}$,
$\nabla K(x)\in L^p(\mathbb{R}^d)$ and $\nabla K_t(x)\in L^p(\mathbb{R}^d)$.
\end{enumerate}
\end{lem}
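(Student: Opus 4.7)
The plan is to split each of the three pointwise estimates into a near-origin bound, obtained from integrability of the Fourier symbol, and a far-field algebraic decay obtained by harmonic analysis or subordination; the $L^p$ statements then follow from the pointwise envelopes together with the parabolic scaling relating $K_t$ to $K$.

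For (i), first observe that $K\in L^\infty(\mathbb{R}^d)$ because $e^{-|\xi|^\alpha}\in L^1(\mathbb{R}^d)$ for every $\alpha>0$, giving $|K(x)|\leq C$ uniformly and, in particular, $|K(x)|\leq C(1+|x|)^{-d-\alpha}$ on the region $|x|\leq 1$. For $|x|\geq 1$, the cleanest route for $0<\alpha<2$ is the subordination identity
\[
e^{-|\xi|^\alpha}=\int_0^\infty e^{-s|\xi|^2}\,\sigma_{\alpha/2}(s)\,ds,
\]
where $\sigma_{\alpha/2}$ is the one-sided $\alpha/2$-stable density; inverting the Fourier transform gives
\[
K(x)=\int_0^\infty (4\pi s)^{-d/2}e^{-|x|^2/(4s)}\sigma_{\alpha/2}(s)\,ds,
\]
and combining the known tail $\sigma_{\alpha/2}(s)\sim c\,s^{-1-\alpha/2}$ as $s\to\infty$ with rapid decay as $s\to 0^+$, the change of variables $s=|x|^2\tau$ yields $|K(x)|\lesssim |x|^{-d-\alpha}$. (The case $\alpha=2$ is the Gaussian, which is immediate.) An alternative argument is direct oscillatory integration in $\xi$: split $\int e^{ix\cdot\xi}e^{-|\xi|^\alpha}\,d\xi$ via a smooth cut-off into low- and high-frequency parts, apply $|x|^{-2N}\Delta_\xi^N$ to the high-frequency piece, and use the Taylor expansion of $e^{-|\xi|^\alpha}-1$ near $\xi=0$ to isolate the $|\xi|^\alpha$ singularity that controls the $|x|^{-d-\alpha}$ rate.

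For (ii), the same program is applied to $(-\Delta)^{\mu/2}K$, whose Fourier symbol is $|\xi|^\mu e^{-|\xi|^\alpha}$. The symbol is integrable (the factor $|\xi|^\mu$ is harmless at infinity and integrable near the origin since $\mu>0>-d$), so $(-\Delta)^{\mu/2}K\in L^\infty$, handling $|x|\leq 1$. The far-field decay is dictated by the $\mu$-homogeneous singularity of $|\xi|^\mu$ at $\xi=0$, and the subordination or integration-by-parts argument produces $|(-\Delta)^{\mu/2}K(x)|\leq C|x|^{-d-\mu}$ for $|x|\geq 1$. Part (iii) is identical with the symbol $i\xi_j e^{-|\xi|^\alpha}$, whose first-order zero at $\xi=0$ generates the $|x|^{-d-1}$ tail.

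The $L^p$ assertions are then automatic: each envelope $(1+|x|)^{-d-\kappa}$ with $\kappa\in\{\alpha,\mu,1\}>0$ lies in $L^p(\mathbb{R}^d)$ for every $1\leq p\leq\infty$, because it is bounded on $|x|\leq 1$ and $\int_{|x|\geq 1}|x|^{-p(d+\kappa)}\,dx<\infty$ for all such $p$. To pass from $K$ to $K_t$, the scaling $K_t(x)=t^{-d/\alpha}K(t^{-1/\alpha}x)$ yields $\|K_t\|_p=t^{-\frac{d}{\alpha}(1-1/p)}\|K\|_p$, and the analogous scalings
\[
\|(-\Delta)^{\mu/2}K_t\|_p=t^{-\frac{d}{\alpha}(1-\frac{1}{p})-\frac{\mu}{\alpha}}\|(-\Delta)^{\mu/2}K\|_p,\qquad \|\nabla K_t\|_p=t^{-\frac{d}{\alpha}(1-\frac{1}{p})-\frac{1}{\alpha}}\|\nabla K\|_p,
\]
transfer $L^p$ membership to $K_t$ for every $t>0$. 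The main obstacle is the pointwise tail bound as $|x|\to\infty$: because $e^{-|\xi|^\alpha}$ is not smooth at $\xi=0$ for non-integer $\alpha$, one cannot appeal to a naive Schwartz-class argument and must track the $|\xi|^\alpha$-regularity carefully via subordination or frequency-localized integration by parts.
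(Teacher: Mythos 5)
This lemma is quoted by the paper from \cite{MiaoYuanZhang} without any internal proof, so there is no in-paper argument to compare against; judged on its own terms, your proposal is correct and follows the standard route in the literature for this estimate. Your part (i) is complete: the subordination representation $K(x)=\int_0^\infty(4\pi s)^{-d/2}e^{-|x|^2/(4s)}\sigma_{\alpha/2}(s)\,ds$, the global bound $\sigma_{\alpha/2}(s)\le C\min(1,s^{-1-\alpha/2})$, and the substitution $s=|x|^2\tau$ do give $|K(x)|\lesssim|x|^{-d-\alpha}$ for $|x|\ge 1$, while integrability of the symbol handles $|x|\le 1$; and your scaling identities, including the extra factors $t^{-\mu/\alpha}$ and $t^{-1/\alpha}$ coming from the homogeneity of $(-\Delta)^{\mu/2}$ and $\nabla$, correctly transfer the $L^p$ statements from $K$ to $K_t$. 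Three small points deserve care. First, subordination is available only for $0<\alpha\le 2$ (which covers the paper's range $1<\alpha\le 2$, but the lemma is stated for all $\alpha>0$, so for $\alpha>2$ you must rely on your frequency-localized integration-by-parts alternative; for non-integer $\alpha>2$ that argument still yields the $|x|^{-d-\alpha}$ rate from the $|\xi|^\alpha$ singularity at the origin, and for even integer $\alpha$ the kernel is Schwartz). Second, in (ii) subordination does not apply verbatim to the symbol $|\xi|^{\mu}e^{-|\xi|^{\alpha}}$; if you differentiate under the subordination integral you need the auxiliary estimate $|(-\Delta)^{\mu/2}G_s(x)|\le C s^{-(d+\mu)/2}\bigl(1+s^{-1/2}|x|\bigr)^{-d-\mu}$ for the Gaussian $G_s$ before integrating in $s$ — the algebraic (rather than Gaussian) tail arises precisely because $(-\Delta)^{\mu/2}$ is nonlocal. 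Third, in (iii) the heuristic that the ``first-order zero'' of $i\xi_j e^{-|\xi|^{\alpha}}$ generates the $|x|^{-d-1}$ tail is imprecise: a smooth factor vanishing at the origin produces no algebraic tail by itself; the tail comes from the nonsmooth factor $e^{-|\xi|^{\alpha}}$, and in fact one gets $|\nabla K(x)|\lesssim|x|^{-d-1-\alpha}$ for $|x|\ge 1$, which is stronger than the stated bound and, combined with boundedness near the origin, still yields $|\nabla K(x)|\le C(1+|x|)^{-d-1}$. None of these affect the validity of the conclusions.
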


As showed in \cite{Taylor1}, one has the following connections between Mittag-Leffler operators and the Mainardi function:
\begin{align}\label{MLM}
\begin{split}
E_\beta(-t^\beta(-\Delta)^{\frac{\alpha}{2}})&=\int_0^\infty M_\beta(s)K^\alpha(st^\beta)\,ds,\\
E_{\beta,\beta}(-t^\beta(-\Delta)^{\frac{\alpha}{2}})&=\int_0^\infty \beta s M_\beta(s)K^\alpha(st^\beta)\,ds,\\
E_\beta(t^\beta(-(-\Delta)^{\frac{\alpha}{2}}-\gamma))&=\int_0^\infty M_\beta(s)K_\gamma^\alpha(st^\beta)\,ds,\\
E_{\beta,\beta}(t^\beta(-(-\Delta)^{\frac{\alpha}{2}}-\gamma))&=\int_0^\infty \beta s M_\beta(s)K_\gamma^\alpha(st^\beta)\,ds.
\end{split}
\end{align}

In \cite{Taylor1}, the solution to the following abstract initial value problem
\begin{equation*}
\left\{
  \begin{aligned}
  &^c_0D_t^\beta w=A w+f(t),\\
  &w|_{t=0}=w_0,\\
  \end{aligned}
     \right.
\end{equation*}
is given by
\begin{align*}
w(t)=E_\beta( t^\beta A)w_0+\int_0^t(t-\tau)^{\beta-1}E_{\beta,\beta}( (t-\tau)^\beta A)f(\tau)\,d\tau.
\end{align*}
Specialize $A=-(-\Delta)^{\frac{\alpha}{2}}$ to find that $(n,v,u)$ satisfies the following Duhamel type integral equations:
\begin{equation}\label{mildsolu0}
\left\{
  \begin{aligned}
  &n=E_\beta(-t^\beta (-\Delta)^{\frac{\alpha}{2}})n_0-\int_0^t(t-\tau)^{\beta-1}E_{\beta,\beta}\big(-(t-\tau)^\beta (-\Delta)^{\frac{\alpha}{2}}\big)(u\cdot\nabla n+\nabla\cdot (n\nabla v))\,d\tau,\\
  &v=E_\beta(t^\beta (-(-\Delta)^{\frac{\alpha}{2}}-\gamma)v_0-\int_0^t(t-\tau)^{\beta-1}E_{\beta,\beta}\big((t-\tau)^\beta (-(-\Delta)^{\frac{\alpha}{2}}-\gamma)\big)(u\cdot\nabla v-n)\,d\tau,\\
  &u=E_\beta(-t^\beta (-\Delta)^{\frac{\alpha}{2}})u_0-\int_0^t(t-\tau)^{\beta-1}E_{\beta,\beta}\big(-(t-\tau)^\beta (-\Delta)^{\frac{\alpha}{2}}\big)[P((u\cdot\nabla) u+n\nabla\phi)]\,d\tau,
  \end{aligned}
     \right.
\end{equation}
where $P=\{P_{jk}\}_{j,k=1,2,\cdots,d}$ is the projection operator onto the solenoidal vector fields with the expression
\[
P_{jk}=\delta_{jk}+R_j R_k
\]
with Riesz operator $R_j:=\frac{\partial}{\partial x_j}(-\Delta)^{-\frac{1}{2}} $. It is well-known that for all $1<q<\infty$, there holds  $PL^q(\mathbb{R}^d)=L^q_\sigma(\mathbb{R}^d):=\{g\in L^q(\mathbb{R}^d)|\nabla\cdot g=0\}$.

Now we provide the definition of mild solution as follows.
\begin{defn}\label{def:mild}
Let $X$ be a Banach space over space and time. We call that $(n,v,u)\in X$ is a mild solution to system \eqref{chemo-fluid-eq} if $(n,v,u)$ satisfies the integral equation \eqref{mildsolu0} in $X$.
\end{defn}

The following lemma regarding the estimates of the gradient of $K^\alpha(t)$ plays an indispensable role in the proof process of $L^p-L^q$ estimates to the Mittag-Leffler operators.
\begin{lem}\label{Kesti}
Suppose that $1\leq q\leq p\leq \infty$, $\mu\geq0$ and $f\in L^q({\mathbb{R}^d})$. Then the following estimate holds
\[
\begin{split}
\|(-\Delta)^{\frac{\mu}{2}} K^\alpha(t)f\|_{p}&\leq Ct^{-\frac{1}{\alpha}(\mu+\frac{d}{q}-\frac{d}{p})}\|f\|_{q},\\
\|(-\Delta)^{\frac{\mu}{2}} \nabla K^\alpha(t)f\|_{p}&\leq Ct^{-\frac{1}{\alpha}(\mu+1+\frac{d}{q}-\frac{d}{p})}\|f\|_{q}.
\end{split}
\]
here $C$ is a positive constant and independent of $t$.
\end{lem}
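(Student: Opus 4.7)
My plan is to reduce the estimate to the $t=1$ case by self-similar scaling, then close via Young's convolution inequality, using Lemma \ref{estiKtx} to bound the relevant kernels.

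First, I would exploit the scaling $K_t(x) = t^{-d/\alpha} K(t^{-1/\alpha} x)$ to compute how $(-\Delta)^{\mu/2}$ and $\nabla$ act on $K_t$. Since $(-\Delta)^{\mu/2}$ is a homogeneous Fourier multiplier of degree $\mu$, applying the change of variables $y = t^{-1/\alpha} x$ gives
\[
(-\Delta)^{\mu/2} K_t(x) = t^{-(d+\mu)/\alpha}\bigl[(-\Delta)^{\mu/2} K\bigr]\!\bigl(t^{-1/\alpha} x\bigr),
\]
and an analogous identity with an extra factor $t^{-1/\alpha}$ for $(-\Delta)^{\mu/2} \nabla K_t$. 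Taking $L^s$ norms in $x$ and changing variables yields
\[
\|(-\Delta)^{\mu/2} K_t\|_s = t^{-\frac{1}{\alpha}(\mu + d/s')} \|(-\Delta)^{\mu/2} K\|_s, \qquad \|(-\Delta)^{\mu/2} \nabla K_t\|_s = t^{-\frac{1}{\alpha}(\mu + 1 + d/s')} \|(-\Delta)^{\mu/2} \nabla K\|_s,
\]
where $s'$ is the H\"{o}lder conjugate of $s$. The $L^s$ norms of $(-\Delta)^{\mu/2} K$ and $(-\Delta)^{\mu/2} \nabla K$ on the right-hand side are finite constants for every $1\le s \le \infty$ by Lemma \ref{estiKtx} (parts (ii)--(iii) together with the pointwise bound $|(-\Delta)^{\mu/2}\nabla K(x)|\lesssim (1+|x|)^{-d-\mu-1}$, which follows by the same argument as in Lemma \ref{estiKtx} applied to $\nabla K$).

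Next, I would apply Young's convolution inequality to $K^\alpha(t)f = K_t * f$: choosing $s$ so that $1 + 1/p = 1/q + 1/s$ (which is admissible precisely because $1 \le q \le p \le \infty$, so $1/s = 1 + 1/p - 1/q \in [0,1]$), Young gives
\[
\|(-\Delta)^{\mu/2} K^\alpha(t) f\|_p \le \|(-\Delta)^{\mu/2} K_t\|_s \, \|f\|_q.
\]
Substituting the scaling formula and using $1/s' = 1/q - 1/p$ produces the exponent
\[
-\tfrac{1}{\alpha}\bigl(\mu + d/s'\bigr) = -\tfrac{1}{\alpha}\bigl(\mu + d/q - d/p\bigr),
\]
which is exactly the first claimed estimate. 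The identical argument applied to $(-\Delta)^{\mu/2}\nabla K_t$ yields the second estimate, the only difference being the extra factor $t^{-1/\alpha}$ from the gradient scaling.

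The only point that requires a little care, rather than being automatic, is justifying finiteness of $\|(-\Delta)^{\mu/2} \nabla K\|_s$ uniformly for $1 \le s \le \infty$. Lemma \ref{estiKtx} provides $|(-\Delta)^{\mu/2} K(x)| \lesssim (1+|x|)^{-d-\mu}$ and $|\nabla K(x)| \lesssim (1+|x|)^{-d-1}$ separately; to combine them, I would note that since $\widehat{K}(\xi) = e^{-|\xi|^\alpha}$ is Schwartz-like away from the origin and smooth with bounded $C^\infty$ symbol $|\xi|^{\mu}\xi_j e^{-|\xi|^\alpha}$, a stationary-phase/integration-by-parts argument identical to that in \cite{MiaoYuanZhang} gives the pointwise bound $|(-\Delta)^{\mu/2}\nabla K(x)| \lesssim (1+|x|)^{-d-\mu-1}$, whose $L^s$ norm is finite for every $s \in [1,\infty]$. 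I expect this verification to be the only non-cosmetic step; the rest is bookkeeping of exponents.
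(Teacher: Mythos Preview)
Your proposal is correct and follows essentially the same approach as the paper: scale $K_t$ to reduce to $t=1$, invoke Lemma~\ref{estiKtx} for the finiteness of $\|(-\Delta)^{\mu/2}K\|_s$, and apply Young's inequality with $1+\tfrac1p=\tfrac1q+\tfrac1s$. The paper dispatches the second estimate with ``similarly,'' while you add a brief justification for $\|(-\Delta)^{\mu/2}\nabla K\|_s<\infty$; otherwise the arguments are identical.
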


\begin{proof}
Combining \eqref{ktx} and Lemma \ref{estiKtx}, we get
\[
\begin{split}
\|(-\Delta)^{\frac{\mu}{2}}K_t(x)\|_r&=t^{-\frac{d}{\alpha}}\big(\int_{\mathbb{R}^d}|(-\Delta)^{\frac{\mu}{2}}K(t^{-\frac{1}{\alpha}}x)|^r\,dx|\big)^{\frac{1}{r}}\\
&=t^{-\frac{d}{\alpha}-\frac{1}{\mu}+\frac{d}{\alpha r}}\big(\int_{\mathbb{R}^d}|(-\Delta)^{\frac{\mu}{2}}K(\xi)|^r\,dx|\big)^{\frac{1}{r}}\\
&\leq Ct^{-\frac{d}{\alpha}(\frac{\mu}{d}+1-\frac{1}{r})}.
\end{split}
\]
By Young's inequality, it holds that
\[
\begin{split}
\|(-\Delta)^{\frac{\mu}{2}} K^\alpha(t)f\|_{p}&=\|(-\Delta)^{\frac{\mu}{2}}K_t(x)\ast f\|_p\\
&\leq \|(-\Delta)^{\frac{\mu}{2}}K_t(x)\|_r\|f\|_q\\
&\leq Ct^{-\frac{d}{\alpha}(\frac{\mu}{d}+1-\frac{1}{r})}\|f\|_q,
\end{split}
\]
where $1+\frac{1}{p}=\frac{1}{r}+\frac{1}{q}$. And the estimates of $\|(-\Delta)^{\frac{\mu}{2}} \nabla K^\alpha(t)f\|_{p}$ can be obtained similarly.
\end{proof}

The following proposition gives the $L^p-L^q$ estimates of the Mittag-Leffler operators.

\begin{prop}\label{pro:MLopLpqesti}
Let $d\geq2$, $0<\beta<1$, $1<\alpha\leq 2$, $\gamma\geq 0$ and $\mu\geq 0$. Assume that $1\leq q\leq p\leq\infty$ and $f\in L^q(\mathbb{R}^d)$, then there exists $C>0$ such that the following estimates hold
\begin{enumerate}[(i)]
\item If $~0\leq\frac{1}{q}-\frac{1}{p}<\frac{\alpha-\mu}{d}$, we have
\begin{equation}\label{MLLpq1}
\|(-\Delta)^{\frac{\mu}{2}}E_\beta(-t^\beta(-\Delta)^{\frac{\alpha}{2}})f\|_{p}\leq Ct^{-\frac{d\beta}{\alpha}(\frac{1}{q}-\frac{1}{p})-\frac{\mu\beta}{\alpha}}\|f\|_{q},
\end{equation}
\begin{equation}\label{MLLpqgamma1}
\|(-\Delta)^{\frac{\mu}{2}}E_\beta(t^\beta(-(-\Delta)^{\frac{\alpha}{2}}-\gamma))f\|_{p}\leq Ct^{-\frac{d\beta}{\alpha}(\frac{1}{q}-\frac{1}{p})-\frac{\mu\beta}{\alpha}}A_{-\frac{d}{\alpha}(\frac{1}{q}-\frac{1}{p})-\frac{\mu}{\alpha}}(t)\|f\|_{q}.
\end{equation}
Particularly, if $p=q$, the constant $C$ can be chosen to be $1$.
\item If $~0\leq\frac{1}{q}-\frac{1}{p}<\frac{2\alpha-\mu}{d}$, we have
\begin{equation}\label{MLLpq2}
\|(-\Delta)^{\frac{\mu}{2}}E_{\beta,\beta}(-t^\beta(-\Delta)^{\frac{\alpha}{2}})f\|_{p}\leq Ct^{-\frac{d\beta}{\alpha}(\frac{1}{q}-\frac{1}{p})-\frac{\mu\beta}{\alpha}}\|f\|_{q},
\end{equation}
\begin{equation}\label{MLLpqgamma2}
\|(-\Delta)^{\frac{\mu}{2}}E_{\beta,\beta}(t^\beta(-(-\Delta)^{\frac{\alpha}{2}}-\gamma))f\|_{p}\leq Ct^{-\frac{d\beta}{\alpha}(\frac{1}{q}-\frac{1}{p})-\frac{\mu\beta}{\alpha}}A_{1-\frac{d}{\alpha}(\frac{1}{q}-\frac{1}{p})-\frac{\mu}{\alpha}}(t)\|f\|_{q}.
\end{equation}
\item If $~0\leq\frac{1}{q}-\frac{1}{p}<\frac{\alpha-1-\mu}{d}$, we have
\begin{equation}\label{MLLpq3}
\|(-\Delta)^{\frac{\mu}{2}}\nabla E_\beta(-t^\beta(-\Delta)^{\frac{\alpha}{2}})f\|_{p}\leq Ct^{-\frac{d\beta}{\alpha}(\frac{1}{q}-\frac{1}{p})-\frac{\beta}{\alpha}-\frac{\mu\beta}{\alpha}}\|f\|_{q},
\end{equation}
\begin{equation}\label{MLLpqgamma3}
\|(-\Delta)^{\frac{\mu}{2}}\nabla E_\beta(t^\beta(-(-\Delta)^{\frac{\alpha}{2}}-\gamma))f\|_{p}\leq Ct^{-\frac{d\beta}{\alpha}(\frac{1}{q}-\frac{1}{p})-\frac{\beta}{\alpha}-\frac{\mu\beta}{\alpha}}A_{-\frac{d}{\alpha}(\frac{1}{q}-\frac{1}{p})-\frac{1+\mu}{\alpha}}(t)\|f\|_{q}.
\end{equation}
\item If $~0\leq\frac{1}{q}-\frac{1}{p}<\frac{2\alpha-1-\mu}{d}$, we have
\begin{equation}\label{MLLpq4}
\|(-\Delta)^{\frac{\mu}{2}}\nabla E_{\beta,\beta}(-t^\beta(-\Delta)^{\frac{\alpha}{2}})f\|_{p}\leq Ct^{-\frac{d\beta}{\alpha}(\frac{1}{q}-\frac{1}{p})-\frac{\beta}{\alpha}-\frac{\mu\beta}{\alpha}}\|f\|_{q},
\end{equation}
\begin{equation}\label{MLLpqgamma4}
\|(-\Delta)^{\frac{\mu}{2}}\nabla E_{\beta,\beta}(t^\beta(-(-\Delta)^{\frac{\alpha}{2}}-\gamma))f\|_{p}\leq Ct^{-\frac{d\beta}{\alpha}(\frac{1}{q}-\frac{1}{p})-\frac{\beta}{\alpha}-\frac{\mu\beta}{\alpha}}A_{1-\frac{d}{\alpha}(\frac{1}{q}-\frac{1}{p})-\frac{1+\mu}{\alpha}}(t)\|f\|_{q},
\end{equation}
\end{enumerate}
where $A_{\sigma}(z)=\int_0^\infty s^{\sigma}M_\beta (s)e^{-sz^\beta\gamma}\,ds$.
\end{prop}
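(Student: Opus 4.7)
The strategy is to exploit the subordination identities \eqref{MLM}, which express each Mittag-Leffler operator as a superposition of the fractional heat semigroups $K^\alpha(st^\beta)$ (or their damped counterparts $K^\alpha_\gamma(st^\beta)=e^{-st^\beta\gamma}K^\alpha(st^\beta)$) against the Mainardi density $M_\beta(s)$. The plan is: (a) interchange $(-\Delta)^{\mu/2}$ (or $(-\Delta)^{\mu/2}\nabla$) with the $s$-integral, (b) apply Minkowski's integral inequality in $L^p$, using the non-negativity of $M_\beta$ from Lemma \ref{lemMbeta(s)}, (c) bound the inner norm pointwise in $s$ by Lemma \ref{Kesti}, and (d) pull out the time factor and integrate against $M_\beta$ in $s$, using Lemma \ref{lemMbeta(s)} in the undamped case and absorbing the damping into the weight $A_\sigma(t)$ in the damped case.

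To illustrate with \eqref{MLLpq1}, the first relation in \eqref{MLM} combined with Minkowski yields
\begin{equation*}
\|(-\Delta)^{\mu/2}E_\beta(-t^\beta(-\Delta)^{\alpha/2})f\|_p\leq \int_0^\infty M_\beta(s)\,\|(-\Delta)^{\mu/2}K^\alpha(st^\beta)f\|_p\,ds.
\end{equation*}
Lemma \ref{Kesti} bounds the integrand by $C(st^\beta)^{-(\mu+d/q-d/p)/\alpha}\|f\|_q$; factoring out $t^{-\beta(\mu+d/q-d/p)/\alpha}$ leaves $\int_0^\infty s^{-(\mu+d/q-d/p)/\alpha}M_\beta(s)\,ds$, which is finite precisely when the exponent exceeds $-1$, i.e.\ when $\frac{1}{q}-\frac{1}{p}<\frac{\alpha-\mu}{d}$, and Lemma \ref{lemMbeta(s)} evaluates it to $\Gamma(1-\tfrac{\mu+d/q-d/p}{\alpha})/\Gamma(1-\tfrac{\beta(\mu+d/q-d/p)}{\alpha})$, giving \eqref{MLLpq1}. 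The refinement $p=q$, $\mu=0$ collapses the integral to $\int_0^\infty M_\beta(s)ds=1$, whence $C=1$.

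Estimates (ii)--(iv) follow the same template with two independent bookkeeping modifications. First, the second and fourth identities in \eqref{MLM} carry an additional factor $\beta s$, so the $s$-moment index shifts up by one, relaxing the admissible range of $\frac{1}{q}-\frac{1}{p}$ from $<\frac{\alpha-\mu}{d}$ to $<\frac{2\alpha-\mu}{d}$. Second, when $K^\alpha$ is replaced by $\nabla K^\alpha$, the second bound in Lemma \ref{Kesti} inserts an extra weight $(st^\beta)^{-1/\alpha}$, which both shifts the $s$-exponent down by $1/\alpha$ (shrinking the admissible range by $1/d$) and adds the factor $t^{-\beta/\alpha}$ in the conclusion. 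Combining these shifts produces the exponents and ranges in \eqref{MLLpq2}, \eqref{MLLpq3}, \eqref{MLLpq4}.

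For the damped variants \eqref{MLLpqgamma1}--\eqref{MLLpqgamma4}, the very same chain of inequalities applies, but the factor $e^{-st^\beta\gamma}$ from $K^\alpha_\gamma$ stays under the $s$-integral. After factoring out the appropriate power of $t$, the remaining integral is by definition $A_\sigma(t)=\int_0^\infty s^\sigma M_\beta(s)e^{-st^\beta\gamma}ds$, with $\sigma\in\{-\frac{d}{\alpha}(\frac{1}{q}-\frac{1}{p})-\frac{\mu}{\alpha},\ 1-\frac{d}{\alpha}(\frac{1}{q}-\frac{1}{p})-\frac{\mu}{\alpha},\ -\frac{d}{\alpha}(\frac{1}{q}-\frac{1}{p})-\frac{1+\mu}{\alpha},\ 1-\frac{d}{\alpha}(\frac{1}{q}-\frac{1}{p})-\frac{1+\mu}{\alpha}\}$ in the four cases respectively, matching the statement. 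The only real work is the algebraic bookkeeping of the four shifted moment indices; there is no analytic obstacle beyond the pointwise kernel estimate of Lemma \ref{Kesti} and the elementary moment computation of Lemma \ref{lemMbeta(s)}.
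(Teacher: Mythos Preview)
Your proposal is correct and follows essentially the same approach as the paper: both use the subordination identities \eqref{MLM} to reduce to the fractional heat semigroup, apply the pointwise $L^p$--$L^q$ estimate of Lemma \ref{Kesti} inside the $s$-integral, and then evaluate (or bound) the resulting $s$-moment via Lemma \ref{lemMbeta(s)} in the undamped case or leave it as $A_\sigma(t)$ in the damped case. Your bookkeeping of the exponent shifts (the extra factor $\beta s$ for $E_{\beta,\beta}$ and the extra $(st^\beta)^{-1/\alpha}$ for the gradient) and your remark on the case $p=q$, $\mu=0$ giving $C=1$ match the paper's computations exactly.
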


\begin{rem}
For $-1<\sigma<\infty$, the boundedness of function $A_{\sigma}$ has been pointed out in \cite{MarioB}. In fact, by Lemma \ref{lemMbeta(s)}, one can easily find that $$\sup_{t>0}A_\sigma(t)\leq\frac{\Gamma(1+\sigma)}{\Gamma(1+\beta\sigma)}.$$
\end{rem}

\begin{proof}
By virtue of the first formula in \eqref{MLM} and Lemma \ref{Kesti}, it follows that
\begin{gather*}
\begin{split}
\|(-\Delta)^{\frac{\mu}{2}}E_\beta(-t^\beta(-\Delta)^{\frac{\alpha}{2}})f\|_{p}&\leq\int_0^\infty M_\beta (s)\|(-\Delta)^{\frac{\mu}{2}}K^\alpha(st^\beta)f\|_{p}\,ds\\
&\leq Ct^{-\frac{d\beta}{\alpha}(\frac{1}{q}-\frac{1}{p})-\frac{\mu\beta}{\alpha}}\|f\|_{q}\int_0^\infty M_\beta (s)s^{-\frac{d}{\alpha}(\frac{1}{q}-\frac{1}{p})-\frac{\mu}{\alpha}}\,ds\\
&=C\frac{\Gamma(1-\frac{d}{\alpha}(\frac{1}{q}-\frac{1}{p})-\frac{\mu}{\alpha})}{\Gamma(1-\frac{d\beta}{\alpha}(\frac{1}{q}-\frac{1}{p})-\frac{\mu\beta}{\alpha})}
t^{-\frac{d\beta}{\alpha}(\frac{1}{q}-\frac{1}{p})-\frac{\mu\beta}{\alpha}}\|f\|_{q},
\end{split}
\end{gather*}
and the last equality holds due to Lemma \ref{lemMbeta(s)}. Thus \eqref{MLLpq1} is verified.

We now come to estimate \eqref{MLLpqgamma1}. Utilizing the third formula in \eqref{MLM}, the definition of $K^\alpha_\gamma(t)$ and Lemma \ref{Kesti}, we see that
\begin{gather*}
\begin{split}
\|(-\Delta)^{\frac{\mu}{2}}E_\beta(t^\beta(-(-\Delta)^{\frac{\alpha}{2}})-\gamma)f\|_{p}\leq&\int_0^\infty M_\beta (s)\|(-\Delta)^{\frac{\mu}{2}}K_\gamma^\alpha(st^\beta)f\|_{p}\,ds\\
\leq &Ct^{-\frac{d\beta}{\alpha}(\frac{1}{q}-\frac{1}{p})-\frac{\mu\beta}{\alpha}}\int_0^\infty M_\beta (s)e^{-st^\beta\gamma}s^{-\frac{d}{\alpha}(\frac{1}{q}-\frac{1}{p})-\frac{\mu}{\alpha}}\|f\|_{q}\,ds\\
\leq& Ct^{-\frac{d\beta}{\alpha}(\frac{1}{q}-\frac{1}{p})-\frac{\mu\beta}{\alpha}}A_{-\frac{d}{\alpha}(\frac{1}{q}-\frac{1}{p})-\frac{\mu}{\alpha}}(t)\|f\|_{q}.
\end{split}
\end{gather*}

Employing \eqref{MLM} and Lemma \ref{Kesti}, we obtain
\begin{gather*}
\begin{split}
\|(-\Delta)^{\frac{\mu}{2}}E_{\beta,\beta}(-t^\beta(-\Delta)^{\frac{\alpha}{2}})f\|_{p}&\leq\int_0^\infty \beta s M_\beta (s)\|(-\Delta)^{\frac{\mu}{2}}K^\alpha(st^\beta)f\|_{p}\,ds\\
&\leq Ct^{-\frac{d\beta}{\alpha}(\frac{1}{q}-\frac{1}{p})-\frac{\mu\beta}{\alpha}}\int_0^\infty M_\beta (s)s^{1-\frac{d}{\alpha}(\frac{1}{q}-\frac{1}{p})-\frac{\mu}{\alpha}}\|f\|_{q}\, ds\\
&=C\frac{\Gamma(2-\frac{d}{\alpha}(\frac{1}{q}-\frac{1}{p})-\frac{\mu}{\alpha})}{\Gamma(1+\beta-\frac{d\beta}{\alpha}(\frac{1}{q}-\frac{1}{p})-\frac{\mu\beta}{\alpha})}
t^{-\frac{d\beta}{\alpha}(\frac{1}{q}-\frac{1}{p})-\frac{\mu\beta}{\alpha}}\|f\|_{q},
\end{split}
\end{gather*}
and
\begin{gather*}
\begin{split}
\|(-\Delta)^{\frac{\mu}{2}}\nabla E_{\beta,\beta}(-t^\beta(-\Delta)^{\frac{\alpha}{2}})f\|_{p}\leq&\int_0^\infty \beta s M_\beta (s)\|(-\Delta)^{\frac{\mu}{2}}\nabla K^\alpha(st^\beta)f\|_{p}\,ds\\
\leq& Ct^{-\frac{d\beta}{\alpha}(\frac{1}{q}-\frac{1}{p})-\frac{(1+\mu)\beta}{\alpha}}\int_0^\infty M_\beta (s)s^{1-\frac{1+\mu}{\alpha}-\frac{d}{\alpha}(\frac{1}{q}-\frac{1}{p})}\|f\|_{q}\,ds\\
=&C\frac{\Gamma(2-\frac{1+\mu}{\alpha}-\frac{d}{\alpha}(\frac{1}{q}-\frac{1}{p}))}{\Gamma(1+\beta-\frac{(1+\mu)\beta}{\alpha}-\frac{d\beta}{\alpha}(\frac{1}{q}-\frac{1}{p}))}
t^{-\frac{d\beta}{\alpha}(\frac{1}{q}-\frac{1}{p})-\frac{(1+\mu)\beta}{\alpha}}\|f\|_{q},
\end{split}
\end{gather*}
which yields \eqref{MLLpq2} and \eqref{MLLpq4}. The proofs of the other estimates \eqref{MLLpqgamma2}-\eqref{MLLpqgamma3} and \eqref{MLLpqgamma4} can be verified with the same arguments as the above.
\end{proof}

\begin{rem}
Reference \cite{LLW} listed the $L^p-L^q$ estimates regarding the operators $S_\alpha^\beta(t)$ and $T_\alpha^\beta(t)$, which are defined by
\begin{align}\label{PQ}
\begin{split}
S_{\alpha}^{\beta}(t)f(x):&=E_{\beta}(-t^{\beta}A)f(x)=P(\cdot, t)*f(x),\\
T_{\alpha}^{\beta}(t)f(x):&=t^{\beta-1}E_{\beta,\beta}(-t^{\beta}A)f(x)=Y(\cdot, t)*f(x),
\end{split}
\end{align}
and the $L^p-L^q$ estimates in \cite{LLW} follow from the asymptotic behaviours of $P(x,t)$ and $~Y(x,t)$. As a matter of fact, the $L^p-L^q$ estimates of the operators $S_\alpha^\beta(t)$ and $T_\alpha^\beta(t)$ in \cite{LLW} are equivalent to the estimates listed in Proposition \ref{pro:MLopLpqesti} whenever $\mu=0$.
\end{rem}

Based on the above estimates, we then establish the time continuity of the Mittag-Leffler operators.
\begin{prop}\label{pro:MLopconti}
For $d\geq2$, $0<T\leq \infty$, $0<\beta<1$, $1<\alpha\leq2$, $\gamma\geq 0$, $\mu\geq0$ and $p,q$ satisfy the conditions in Proposition \ref{pro:MLopLpqesti}. If $f\in L^q(\mathbb{R}^d)$, then we have
\begin{equation}\label{conMLLpq1}
t^{\frac{d\beta}{\alpha}(\frac{1}{q}-\frac{1}{p})+\frac{\mu\beta}{\alpha}}(-\Delta)^{\frac{\mu}{2}}E_\beta(-t^\beta(-\Delta)^{\frac{\alpha}{2}})f\in C((0,T);L^p(\mathbb{R}^d)),
\end{equation}
\begin{equation}\label{conMLLpqgamma1}
t^{\frac{d\beta}{\alpha}(\frac{1}{q}-\frac{1}{p})+\frac{\mu\beta}{\alpha}}(-\Delta)^{\frac{\mu}{2}}E_\beta(t^\beta(-(-\Delta)^{\frac{\alpha}{2}}-\gamma)f\in C((0,T);L^p(\mathbb{R}^d)),
\end{equation}
\begin{equation}\label{conMLLpq2}
t^{\frac{d\beta}{\alpha}(\frac{1}{q}-\frac{1}{p})+\frac{(1+\mu)\beta}{\alpha}}(-\Delta)^{\frac{\mu}{2}}\nabla E_\beta(-t^\beta(-\Delta)^{\frac{\alpha}{2}})f\in C((0,T);L^p(\mathbb{R}^d)),
\end{equation}
\begin{equation}\label{conMLLpqgamma2}
t^{\frac{d\beta}{\alpha}(\frac{1}{q}-\frac{1}{p})+\frac{(1+\mu)\beta}{\alpha}}(-\Delta)^{\frac{\mu}{2}}\nabla E_\beta(t^\beta(-(-\Delta)^{\frac{\alpha}{2}}-\gamma)f\in C((0,T);L^p(\mathbb{R}^d)),
\end{equation}
\begin{equation}\label{conMLLpq3}
t^{\frac{d\beta}{\alpha}(\frac{1}{q}-\frac{1}{p})+\frac{\mu\beta}{\alpha}}(-\Delta)^{\frac{\mu}{2}}E_{\beta,\beta}(-t^\beta(-\Delta)^{\frac{\alpha}{2}})f\in C((0,T);L^p(\mathbb{R}^d)),
\end{equation}
\begin{equation}\label{conMLLpqgamma3}
t^{\frac{d\beta}{\alpha}(\frac{1}{q}-\frac{1}{p})+\frac{\mu\beta}{\alpha}}(-\Delta)^{\frac{\mu}{2}}E_{\beta,\beta}(t^\beta(-(-\Delta)^{\frac{\alpha}{2}}-\gamma)f\in C((0,T);L^p(\mathbb{R}^d)),
\end{equation}
\begin{equation}\label{conMLLpq4}
t^{\frac{d\beta}{\alpha}(\frac{1}{q}-\frac{1}{p})+\frac{(1+\mu)\beta}{\alpha}}(-\Delta)^{\frac{\mu}{2}}\nabla E_{\beta,\beta}(-t^\beta(-\Delta)^{\frac{\alpha}{2}})f\in C((0,T);L^p(\mathbb{R}^d)),
\end{equation}
\begin{equation}\label{conMLLpqgamma4}
t^{\frac{d\beta}{\alpha}(\frac{1}{q}-\frac{1}{p})+\frac{(1+\mu)\beta}{\alpha}}(-\Delta)^{\frac{\mu}{2}}\nabla E_{\beta,\beta}(t^\beta(-(-\Delta)^{\frac{\alpha}{2}}-\gamma)f\in C((0,T);L^p(\mathbb{R}^d)).
\end{equation}
\end{prop}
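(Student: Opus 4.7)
The plan is to establish each of the eight statements in Proposition \ref{pro:MLopconti} from the integral representations \eqref{MLM} by a two-step dominated convergence argument: first, continuity in $t$ of the integrand in $L^p$ for each fixed $s$; second, dominated convergence in $s$ based on the moment bound from Lemma \ref{lemMbeta(s)}. I describe the argument in detail for \eqref{conMLLpq1}; the remaining seven claims follow by the same recipe, with bookkeeping modifications I flag at the end.

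Write
\[
(-\Delta)^{\frac{\mu}{2}}E_\beta(-t^\beta(-\Delta)^{\frac{\alpha}{2}})f=\int_0^\infty M_\beta(s)\,(-\Delta)^{\frac{\mu}{2}}K^\alpha(st^\beta)f\,ds,
\]
fix $t_0\in(0,T)$, and pick $\delta>0$ with $[t_0-\delta,t_0+\delta]\subset(0,T)$. In Step 1, for each fixed $s>0$ I show that the map $t\mapsto(-\Delta)^{\mu/2}K^\alpha(st^\beta)f$ is continuous from $(0,T)$ into $L^p(\mathbb{R}^d)$. Using the scaling identity $(-\Delta)^{\mu/2}K_\tau(x)=\tau^{-(d+\mu)/\alpha}\,[(-\Delta)^{\mu/2}K](\tau^{-1/\alpha}x)$ together with the pointwise decay $|(-\Delta)^{\mu/2}K(y)|\leq C(1+|y|)^{-d-\mu}$ from Lemma \ref{estiKtx}, dominated convergence gives $(-\Delta)^{\mu/2}K_{st_1^\beta}\to(-\Delta)^{\mu/2}K_{st_0^\beta}$ in $L^r(\mathbb{R}^d)$ as $t_1\to t_0$ for every $r\in[1,\infty)$. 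Choosing $r$ so that $1+\tfrac{1}{p}=\tfrac{1}{r}+\tfrac{1}{q}$, Young's inequality transfers this $L^r$-convergence of kernels to $L^p$-convergence of the convolutions $(-\Delta)^{\mu/2}K^\alpha(st^\beta)f$.

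In Step 2, Lemma \ref{Kesti} supplies the estimate
\[
\|M_\beta(s)(-\Delta)^{\frac{\mu}{2}}K^\alpha(st^\beta)f\|_p\leq C\,M_\beta(s)\,(st^\beta)^{-\mu/\alpha-d/\alpha(1/q-1/p)}\,\|f\|_q,
\]
which, uniformly for $t\in[t_0-\delta,t_0+\delta]$, is bounded by $C(t_0,\delta)\,M_\beta(s)\,s^{-\mu/\alpha-d/\alpha(1/q-1/p)}\,\|f\|_q$. The hypothesis $0\leq\tfrac{1}{q}-\tfrac{1}{p}<\tfrac{\alpha-\mu}{d}$ forces the exponent on $s$ to lie strictly above $-1$, whereupon Lemma \ref{lemMbeta(s)} guarantees that this dominator is integrable in $s$. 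Dominated convergence then yields continuity of $t\mapsto(-\Delta)^{\mu/2}E_\beta(-t^\beta(-\Delta)^{\alpha/2})f$ as a map $(0,T)\to L^p(\mathbb{R}^d)$, and multiplication by the continuous scalar weight $t^{d\beta/\alpha(1/q-1/p)+\mu\beta/\alpha}$ preserves continuity, establishing \eqref{conMLLpq1}.

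The remaining seven claims follow by identical reasoning. The damped-kernel variants \eqref{conMLLpqgamma1}, \eqref{conMLLpqgamma2}, \eqref{conMLLpqgamma3}, \eqref{conMLLpqgamma4} introduce an extra factor $e^{-st^\beta\gamma}\leq 1$, so the same dominator works. The $E_{\beta,\beta}$ variants carry the additional factor $\beta s$ inside the integral, raising the admissible $s$-exponent by one and matching the hypothesis $\tfrac{1}{q}-\tfrac{1}{p}<\tfrac{2\alpha-\mu}{d}$. The gradient variants contribute an extra $(st^\beta)^{-1/\alpha}$ through the gradient estimate of Lemma \ref{Kesti}, tightening the integrability window to $\tfrac{\alpha-1-\mu}{d}$ or $\tfrac{2\alpha-1-\mu}{d}$ as the proposition requires. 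I expect the main technical friction to lie in Step 1 at endpoint cases (for instance $q=1$ and $p=\infty$, where one needs $L^\infty$-continuity of the rescaled kernel), which is handled via uniform continuity of the smooth profile $(-\Delta)^{\mu/2}K$ afforded by Lemma \ref{estiKtx}; in all non-endpoint regimes the argument reduces to routine dominated convergence.
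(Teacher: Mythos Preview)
Your proposal is correct and follows essentially the same strategy as the paper: both arguments pass through the Mainardi integral representation \eqref{MLM}, use $L^r$-continuity of the rescaled kernel $(-\Delta)^{\mu/2}K_\tau$ together with Young's inequality for the pointwise-in-$s$ limit, and invoke the moment bound of Lemma \ref{lemMbeta(s)} with the exponent constraint from Proposition \ref{pro:MLopLpqesti} to dominate the $s$-integral. The only cosmetic difference is that the paper carries the time weight inside the integral and performs an explicit add--subtract split $I_1+I_2$ (one piece for the weight difference, one for the kernel difference), whereas you first establish continuity of the unweighted operator via dominated convergence and then multiply by the scalar weight; the ingredients and thresholds are identical.
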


\begin{proof}
Fix $t_0>0$ and consider $t>t_0$, the case $t<t_0$ follows analogously. Using the definition of Mittag-Leffler operators \eqref{MLM}, we first write
\begin{align*}
\begin{split}
&\|t^{\frac{d\beta}{\alpha}(\frac{1}{q}-\frac{1}{p})+\frac{\mu\beta}{\alpha}}(-\Delta)^{\frac{\mu}{2}}E_\beta(-t^\beta(-\Delta)^{\frac{\alpha}{2}})f
-t_0^{\frac{d\beta}{\alpha}(\frac{1}{q}-\frac{1}{p})+\frac{\mu\beta}{\alpha}}(-\Delta)^{\frac{\mu}{2}}E_\beta(-t_0^\beta(-\Delta)^{\frac{\alpha}{2}})f\|_{p}\\
\leq&\int_0^\infty s^{-\frac{d}{\alpha}(\frac{1}{q}-\frac{1}{p})-\frac{\mu}{\alpha}}M_\beta(s)\|(st^\beta)^{\frac{d}{\alpha}(\frac{1}{q}-\frac{1}{p})
+\frac{\mu}{\alpha}}(-\Delta)^{\frac{\mu}{2}}K^\alpha(st^\beta)f
-(st_0^\beta)^{\frac{d}{\alpha}(\frac{1}{q}-\frac{1}{p})+\frac{\mu}{\alpha}}(-\Delta)^{\frac{\mu}{2}}K^\alpha(st_0^\beta)f\|_{p}ds\\
\leq&\int_0^\infty s^{-\frac{d}{\alpha}(\frac{1}{q}-\frac{1}{p})-\frac{\mu}{\alpha}}M_\beta(s)\big((st^\beta)^{\frac{d}{\alpha}(\frac{1}{q}-\frac{1}{p})+\frac{\mu}{\alpha}}
-(st_0^\beta)^{\frac{d}{\alpha}(\frac{1}{q}-\frac{1}{p})+\frac{\mu}{\alpha}}\big)\|(-\Delta)^{\frac{\mu}{2}}K^\alpha(st^\beta)f\|_{p}\,ds\\
&+\int_0^\infty s^{-\frac{d}{\alpha}(\frac{1}{q}-\frac{1}{p})-\frac{\mu}{\alpha}}M_\beta(s)(st_0^\beta)^{\frac{d}{\alpha}(\frac{1}{q}-\frac{1}{p})+\frac{\mu}{\alpha}}\|(-\Delta)^{\frac{\mu}{2}}K^\alpha(st^\beta)f
-(-\Delta)^{\frac{\mu}{2}}K^\alpha(st_0^\beta)f\|_{p}\,ds\\
:=&I_1(t,t_0)+I_2(t,t_0).
\end{split}
\end{align*}
Regarding $I_1(t,t_0)$, in terms of Lemma \ref{lemMbeta(s)} and \eqref{MLLpq1} in Proposition \ref{pro:MLopLpqesti}, one obtains that
\begin{align*}
\begin{split}
I_1(t,t_0)&\leq \int_0^\infty s^{-\frac{d}{\alpha}(\frac{1}{q}-\frac{1}{p})-\frac{\mu}{\alpha}}M_\beta(s)\big(t^{\frac{d\beta}{\alpha}(\frac{1}{q}-\frac{1}{p})+\frac{\mu\beta}{\alpha}}
-t_0^{\frac{d\beta}{\alpha}(\frac{1}{q}-\frac{1}{p})+\frac{\mu\beta}{\alpha}}\big)t^{-\frac{d\beta}{\alpha}(\frac{1}{q}-\frac{1}{p})-\frac{\mu\beta}{\alpha}}\|f\|_{q}\,ds\\
&\leq Ct^{-\frac{d\beta}{\alpha}(\frac{1}{q}-\frac{1}{p})-\frac{\mu\beta}{\alpha}}|t^{\frac{d\beta}{\alpha}(\frac{1}{q}-\frac{1}{p})+\frac{\mu\beta}{\alpha}}
-t_0^{\frac{d\beta}{\alpha}(\frac{1}{q}-\frac{1}{p})+\frac{\mu\beta}{\alpha}}|\|f\|_{q}.
\end{split}
\end{align*}
Therefore, $I_1(t,t_0)$ goes to $0$ as $t\rightarrow t_0$.

As for $I_2(t,t_0)$, since $K_t\in C^\infty((0,\infty)\times \mathbb{R}^d)$, we have
\begin{align*}
I_2(t,t_0)&\leq \int_0^\infty M_\beta(s)t_0^{\frac{d\beta}{\alpha}(\frac{1}{q}-\frac{1}{p})+\frac{\mu\beta}{\alpha}}\|(-\Delta)^{\frac{\mu}{2}}K_{st^\beta}
-(-\Delta)^{\frac{\mu}{2}}K_{st_0^\beta}\|_{\frac{pq}{pq+q-p}}\|f\|_{q}\,ds,
\end{align*}
$I_2(t,t_0)$ also goes to $0$ as $t\rightarrow t_0$. Hence this implies that $$t^{\frac{d\beta}{\alpha}(\frac{1}{q}-\frac{1}{p})+\frac{\mu\beta}{\alpha}}(-\Delta)^{\frac{\mu}{2}}E_\beta(-t^\beta(-\Delta)^{\frac{\alpha}{2}})f\in C((0,T);L^p(\mathbb{R}^d)).$$
The other results can be verified in the same way as what we have done above.
\end{proof}

\begin{lem}\label{highregu}\cite{GuoBL}
Let $\mu>0$, $1< p< \infty$. If $f,g\in \mathcal{S}(\mathbb{R}^d)$ and $1<p_2,p_3<\infty$ satisfies
\[
\frac{1}{p}=\frac{1}{p_1}+\frac{1}{p_2}=\frac{1}{p_3}+\frac{1}{p_4},
\]
then the following estimate holds
\[
\|(-\Delta)^{\frac{\mu}{2}}(fg)\|_{p}\leq C\big(\|f\|_{{p_1}}\|g\|_{\dot{H}^{\mu,p_2}}+\|f\|_{\dot{H}^{\mu,p_3}}\|g\|_{{p_4}}\big),
\]
where $\mathcal{S}(\mathbb{R}^d)$ denotes the Schwartz space.
\end{lem}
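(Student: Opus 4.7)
The plan is to establish this fractional Leibniz (Kato--Ponce type) inequality by means of Bony's paraproduct decomposition combined with Littlewood--Paley theory. First, I would fix a smooth homogeneous dyadic partition of unity with projections $\Delta_j$ Fourier-localized in the annulus $\{|\xi|\sim 2^j\}$ and low-frequency cutoffs $S_j=\sum_{k\leq j-1}\Delta_k$, so that $f=\sum_j\Delta_j f$ in $\mathcal{S}'/\mathcal{P}$. Decompose the product via Bony as
\[
fg=T_f g+T_g f+R(f,g),\quad T_f g=\sum_j S_{j-1}f\cdot \Delta_j g,\quad R(f,g)=\sum_{|j-k|\leq 1}\Delta_j f\,\Delta_k g.
\]

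Second, for the paraproduct $T_f g$, the summand $S_{j-1}f\cdot \Delta_j g$ has Fourier support in an annulus of scale $2^j$, so applying $(-\Delta)^{\mu/2}$ to it essentially produces a multiplier of size $2^{j\mu}$. I would then invoke the $L^p$ boundedness of the Littlewood--Paley square function together with the Fefferman--Stein vector-valued maximal inequality and the pointwise bound $|S_{j-1}f|\leq C\,Mf$ (with $M$ the Hardy--Littlewood maximal operator) to conclude
\[
\|(-\Delta)^{\mu/2}T_f g\|_p\leq C\Bigl\|\Bigl(\sum_j 2^{2j\mu}|\Delta_j g|^2\Bigr)^{1/2}\Bigr\|_{p_2}\|Mf\|_{p_1}\leq C\|f\|_{p_1}\|g\|_{\dot{H}^{\mu,p_2}}.
\]
By the symmetric argument applied to $T_g f$, one obtains the contribution $C\|f\|_{\dot{H}^{\mu,p_3}}\|g\|_{p_4}$.

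Third, for the remainder $R(f,g)$, the block $\Delta_j f\,\Delta_k g$ with $|j-k|\leq 1$ is only Fourier-supported in a ball $\{|\xi|\lesssim 2^j\}$ rather than an annulus, so I would insert a further dyadic decomposition $R(f,g)=\sum_l\Delta_l R(f,g)$. After applying $(-\Delta)^{\mu/2}\Delta_l$, only the blocks with $l\leq j+O(1)$ survive, and absorbing the $2^{l\mu}$ factor into the $g$-block produces a geometric series $\sum_{l\leq j+O(1)}2^{(l-j)\mu}$ that converges precisely because $\mu>0$. Summing in $j$ and applying the square-function/maximal argument once more yields the desired bound.

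The main obstacle is the remainder term: unlike the two paraproducts, it lacks annular frequency localization, so one must reshuffle the derivative onto a frequency-localized factor and exploit $\mu>0$ strictly to close the low-frequency geometric tail. A further technical point is to ensure the Fefferman--Stein inequality applies in the full range $1<p_1,p_2,p_3,p_4<\infty$ dictated by the hypothesis, which forces one to balance the distribution of $(-\Delta)^{\mu/2}$ between the factors differently in each of the three pieces of the Bony decomposition.
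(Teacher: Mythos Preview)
The paper does not prove this lemma at all: it is simply quoted from the reference \cite{GuoBL} and used as a black box, so there is no ``paper's own proof'' to compare against. Your paraproduct/Littlewood--Paley argument is the standard route to the Kato--Ponce fractional Leibniz rule and is essentially correct as sketched; in particular your handling of the remainder $R(f,g)$ via the geometric tail $\sum_{l\le j+O(1)}2^{(l-j)\mu}$ is the right mechanism and correctly identifies why the strict inequality $\mu>0$ is needed.
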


\section{Existence of mild solutions in \texorpdfstring{$L^p$}{m } spaces}\label{sec3}
In this section, we concentrate on the local existence and global existence of mild solutions to the Cauchy problem of the time-space fractional Keller-Segel-Navier-Stokes system \eqref{chemo-fluid-eq}. To this end, we take advantage of Banach fixed point theorem and Banach implicit function theorem to obtain the local existence and global existence, respectively.

\subsection{Proof of Theorem \ref{thm:localmildexis}}\label{3.1}

\begin{proof}
According to the assumptions in Theorem \ref{thm:localmildexis}, the initial data $n_0\in L^q(\mathbb{R}^d),\nabla v_0\in L^r(\mathbb{R}^d)$ and $u_0\in L^p(\mathbb{R}^d)$, thus there exists a constant $M_0>0$ such that the initial data satisfy
\[
\|n_0\|_q+\|\nabla v_0\|_r+\|u_0\|_q\leq M_0.
\]
For fixed $T>0$, we define the Banach space as follows
\[
X_{T}:=\{(n,v,u):n\in C((0,T];L^q(\mathbb{R}^d)),\nabla v \in C((0,T];L^r(\mathbb{R}^d)),u\in C((0,T];L^p(\mathbb{R}^d))\}
\]
endowed with the norm
\[
\|(n,v,u)\|_{X_T}:=\sup_{0< t\leq T}\|n(t)\|_{q}+\sup_{0< t\leq T}\|\nabla v(t)\|_{r}+\sup_{0< t\leq T}\|u(t)\|_{p},
\]
and the closed subset $S$, which is denoted by
\[
S:=\{(n,v,u)\in X_T:\|(n,v,u)\|_{X_T}\leq 2M_0\}.
\]
Then for $0< t\leq T$, we introduce a mapping $\mathcal{H}=(\mathcal{H}_1,\mathcal{H}_2,\mathcal{H}_3)(n,v,u)$ on $S$ by defining
\begin{equation}\label{mildsoluH}
\left\{
  \begin{aligned}
  \mathcal{H}_1(n,v,u)=&E_\beta(-t^\beta(-\Delta)^{\frac{\alpha}{2}})n_0-\int_0^t(t-\tau)^{\beta-1}E_{\beta,\beta}(-(t-\tau)^\beta(-\Delta)^{\frac{\alpha}{2}})(u\cdot\nabla n)\,d\tau\\
  &-\int_0^t(t-\tau)^{\beta-1} E_{\beta,\beta}(-(t-\tau)^\beta(-\Delta)^{\frac{\alpha}{2}})\nabla\cdot(n\nabla v)\,d\tau,\\
  \mathcal{H}_2(n,v,u)=&E_\beta\big(t^\beta(-(-\Delta)^{\frac{\alpha}{2}}-\gamma)\big)v_0
  +\int_0^t(t-\tau)^{\beta-1} E_{\beta,\beta}\big((t-\tau)^\beta(-(-\Delta)^{\frac{\alpha}{2}}-\gamma)\big)n\,d\tau\\
  &-\int_0^t(t-\tau)^{\beta-1}E_{\beta,\beta}\big((t-\tau)^\beta(-(-\Delta)^{\frac{\alpha}{2}}-\gamma)\big)(u\cdot\nabla v)\,d\tau,\\
  \mathcal{H}_3(n,v,u)=&E_\beta(-t^\beta(-\Delta)^{\frac{\alpha}{2}})u_0-\int_0^t(t-\tau)^{\beta-1}E_{\beta,\beta}(-(t-\tau)^\beta(-\Delta)^{\frac{\alpha}{2}})[P((u\cdot\nabla) u)]\,d\tau\\
  &-\int_0^t(t-\tau)^{\beta-1}E_{\beta,\beta}(-(t-\tau)^\beta(-\Delta)^{\frac{\alpha}{2}})[P(n\nabla\phi)]\,d\tau.\\
  \end{aligned}
     \right.
\end{equation}

Next, we intend to show that $\mathcal{H}=(\mathcal{H}_1,\mathcal{H}_2,\mathcal{H}_3)$ defined in \eqref{mildsoluH} is a mapping from $S$ to itself.
For any fixed $(n,v,u)\in S$, by \eqref{MLLpq4} in Proposition \ref{pro:MLopLpqesti} and H\"{o}lder's inequality, one has
\begin{equation}\label{preun1}
\begin{split}
\|\nabla \cdot E_{\beta,\beta}(-(t-\tau)^\beta(-\Delta)^{\frac{\alpha}{2}})(u n)\|_{q}&\leq C (t-\tau)^{-\frac{d\beta}{\alpha}(\frac{1}{p}+\frac{1}{q}-\frac{1}{q})-\frac{\beta}{\alpha}}\|un\|_{\frac{pq}{p+q}}\\
&\leq C (t-\tau)^{-\frac{d\beta}{\alpha p}-\frac{\beta}{\alpha}}\|u\|_p\|n\|_q,
\end{split}
\end{equation}
and
\begin{equation}\label{prenv1}
\begin{split}
\|\nabla\cdot E_{\beta,\beta}(-(t-\tau)^\beta(-\Delta)^{\frac{\alpha}{2}})(n\nabla v)\|_{q}&\leq  C (t-\tau)^{-\frac{d\beta}{\alpha}(\frac{1}{r}+\frac{1}{q}-\frac{1}{q})-\frac{\beta}{\alpha}}\|n\nabla v\|_{\frac{rq}{r+q}}\\
&\leq C (t-\tau)^{-\frac{d\beta}{\alpha r}-\frac{\beta}{\alpha}}\|\nabla v\|_r\|n\|_q,
\end{split}
\end{equation}
provided $0<\frac{1}{p}<\frac{2\alpha-1}{d}$ and $0<\frac{1}{r}<\frac{2\alpha-1}{d}$.

For any $0< t\leq T$, we deal with the first formula in \eqref{mildsoluH} with the help of  \eqref{MLLpq1} in Proposition \ref{pro:MLopLpqesti}, \eqref{preun1} and \eqref{prenv1} as follows
\begin{equation*}
\begin{split}
\|\mathcal{H}_1(n,v,u)\|_{q}\leq& \|E_\beta(-t^\beta(-\Delta)^{\frac{\alpha}{2}})n_0\|_{q}+\int_0^t(t-\tau)^{\beta-1}\|\nabla\cdot E_{\beta,\beta}(-(t-\tau)^\beta(-\Delta)^{\frac{\alpha}{2}})(u n)\|_{q}\,d\tau\\
&~+\int_0^t(t-\tau)^{\beta-1}\|\nabla\cdot E_{\beta,\beta}(-(t-\tau)^\beta(-\Delta)^{\frac{\alpha}{2}})(n\nabla v)\|_{q}\,d\tau\\
\leq&\|n_0\|_{q}+ C\int_0^t(t-\tau)^{-\frac{d\beta}{\alpha p}-\frac{\beta}{\alpha}+\beta-1}\|u\|_{p}\|n\|_{q}\,d\tau\\
&~+ C\int_0^t(t-\tau)^{-\frac{d\beta}{\alpha r}-\frac{\beta}{\alpha}+\beta-1}\|\nabla v\|_{r}\|n\|_{q}\,d\tau.
\end{split}
\end{equation*}
Since $p>\frac{d}{\alpha-1}$ and $r>\frac{d}{\alpha-1}$ imply that $-\frac{d\beta}{\alpha p}-\frac{\beta}{\alpha}+\beta>0$ and $-\frac{d\beta}{\alpha r}-\frac{\beta}{\alpha}+\beta>0$, respectively, we then have
\begin{equation}\label{localmildH1}
\begin{split}
\|\mathcal{H}_1(n,v,u)\|_{q}\leq&\|n_0\|_{q}+CT^{-\frac{d\beta}{\alpha}(\frac{1}{p}-\frac{\alpha-1}{d})}\|u\|_{C((0,T];L^p(\mathbb{R}^d))}\|n\|_{C((0,T];L^q(\mathbb{R}^d))}\\
&+CT^{-\frac{d\beta}{\alpha}(\frac{1}{r}-\frac{\alpha-1}{d})}\|\nabla v\|_{C((0,T];L^r(\mathbb{R}^d))}\|n\|_{C((0,T];L^q(\mathbb{R}^d))}.
\end{split}
\end{equation}

Now, we will verify the time continuity of $\mathcal{H}_1(n,v,u)$. Choose $0< t<t+\delta\leq T$ for some $\delta>0$, then one has
\begin{equation*}
\begin{split}
&\|\mathcal{H}_1(n,v,u)(t+\delta)-\mathcal{H}_1(n,v,u)(t)\|_{q}\\
\leq&\|E_{\beta}(-(t+\delta)^\beta(-\Delta)^{\frac{\alpha}{2}})n_0-E_{\beta}(-t^\beta(-\Delta)^{\frac{\alpha}{2}})n_0\|_{q}\\
&+ \int_{t}^{t+\delta}(t+\delta-\tau)^{\beta-1}\|\nabla\cdot E_{\beta,\beta}(-(t+\delta-\tau)^\beta(-\Delta)^{\frac{\alpha}{2}})(u n+n\nabla v)\|_{q}\,d\tau\\
&+\int_0^{t}\|\big((t+\delta-\tau)^{\beta-1}\nabla\cdot E_{\beta,\beta}(-(t+\delta-\tau)^\beta(-\Delta)^{\frac{\alpha}{2}})\\
&~~~~~~~~~~~-(t-\tau)^{\beta-1}\nabla\cdot E_{\beta,\beta}(-(t-\tau)^\beta(-\Delta)^{\frac{\alpha}{2}})\big)(u n+n\nabla v)\|_{q}\,d\tau\\
:=&I_1+I_2+I_3.
\end{split}
\end{equation*}
In terms of \eqref{conMLLpq1} in Proposition \ref{pro:MLopconti}, the first term $I_1$ goes to zero as $\delta \to 0$.

For the second term $I_2$, in a similar fashion as the estimate of \eqref{localmildH1}, by Proposition \ref{pro:MLopLpqesti} and H\"{o}lder's inequality, we see that
\[
\begin{split}
I_2\leq&C\int_t^{t+\delta}(t+\delta-\tau)^{-\frac{d\beta}{\alpha p}-\frac{\beta}{\alpha}+\beta-1}\|u\|_{p}\|n\|_{q}\,d\tau+C\int_t^{t+\delta}(t+\delta-\tau)^{-\frac{d\beta}{\alpha r}-\frac{\beta}{\alpha}+\beta-1}\|\nabla v\|_{r}\|n\|_{q}\,d\tau\\
\leq& C\|n\|_{C((0,T];L^q(\mathbb{R}^d))}\big(\delta^{\frac{d\beta}{\alpha}(\frac{\alpha-1}{d}-\frac{1}{p})}\|u\|_{C((0,T];L^p(\mathbb{R}^d))}+\delta^{\frac{d\beta}{\alpha}(\frac{\alpha-1}{d}-\frac{1}{r})}\|\nabla v\|_{C((0,T];L^r(\mathbb{R}^d))}\big),
\end{split}
\]
then it is not difficult to observe that $I_2$ goes to zero as $\delta \to 0$.

Regarding the third term $I_3$, we split it into two parts as follows
\begin{equation*}
\begin{split}
I_3\leq&\int_0^{t}\big((t+\delta-\tau)^{\beta-1}-(t-\tau)^{\beta-1})\|\nabla\cdot E_{\beta,\beta}(-(t+\delta-\tau)^\beta(-\Delta)^{\frac{\alpha}{2}})(u n+n\nabla v)\,d\tau\|_{q}\,d\tau\\
&+\int_0^t(t-\tau)^{\beta-1}\|\big(\nabla\cdot E_{\beta,\beta}(-(t+\delta-\tau)^\beta(-\Delta)^{\frac{\alpha}{2}})-\nabla\cdot E_{\beta,\beta}(-(t+\delta-\tau)^\beta(-\Delta)^{\frac{\alpha}{2}})\big)\\
&~~~~~~~\times(u n+n\nabla v)\|_{q}\,d\tau\\
:=& I_{31}+I_{32},
\end{split}
\end{equation*}
For any $0<\tau<t$, the fact $(t+\delta)^{-\frac{d\beta}{\alpha p}-\frac{\beta}{\alpha}}<(t+\delta-\tau)^{-\frac{d\beta}{\alpha p}-\frac{\beta}{\alpha}}<\delta^{-\frac{d\beta}{\alpha p}-\frac{\beta}{\alpha}}$ yields that
\[
-\int_0^t(t+\delta-\tau)^{-\frac{d\beta}{\alpha p}-\frac{\beta}{\alpha}}(t-\tau)^{\beta-1}\,d\tau<-C(t+\delta)^{-\frac{d\beta}{\alpha p}-\frac{\beta}{\alpha}}t^\beta.
\]
Combining the above inequality with \eqref{preun1} and \eqref{prenv1}, we find that
\begin{equation}\label{loH1conI31}
\begin{split}
I_{31}\leq&C\big((t+\delta)^{\beta-\frac{\beta}{\alpha}-\frac{d\beta}{ \alpha p}}-\delta^{\beta-\frac{\beta}{\alpha}-\frac{d\beta}{ \alpha p}}-
(t+\delta)^{-\frac{d\beta}{\alpha p}-\frac{\beta}{\alpha}}t^\beta\big)\|n\|_{C((0,T];L^q(\mathbb{R}^d))}\|u\|_{C((0,T];L^p(\mathbb{R}^d))}\\
+&C\big((t+\delta)^{\beta-\frac{\beta}{\alpha}-\frac{d\beta}{ \alpha r}}-\delta^{\beta-\frac{\beta}{\alpha}-\frac{d\beta}{ \alpha r}}-
(t+\delta)^{-\frac{d\beta}{\alpha r}-\frac{\beta}{\alpha}}t^\beta\big)\|n\|_{C((0,T];L^q(\mathbb{R}^d))}\|\nabla v\|_{C((0,T];L^r(\mathbb{R}^d))}.
\end{split}
\end{equation}
Therefore $I_{31}$ goes to zero as $\delta \to 0$.

As regards $I_{32}$, utilizing the estimates listed in \eqref{preun1} and \eqref{prenv1}, we derive that
\begin{equation}\label{loH1conI32}
\begin{split}
I_{32}\leq&C\int_0^t(t-\tau)^{\beta-1}\|\int_0^\infty\beta sM_\beta(s)\big(\nabla\cdot K^\alpha(s(t+\delta)^\beta)-\nabla\cdot K^\alpha(st^\beta)\big)(u n+n\nabla v)\,ds\|_{q}\,d\tau\\
\leq&C\int_0^t(t-\tau)^{\beta-1}\int_0^\infty\beta sM_\beta(s)\|\nabla\cdot K_{s(t+\delta)^\beta}-\nabla\cdot K_{st^\beta}\|_{\frac{p}{p-1}}\|u\|_p \|n\|_q\,ds\,d\tau\\
&+C\int_0^t(t-\tau)^{\beta-1}\int_0^\infty\beta sM_\beta(s)\|\nabla\cdot K_{s(t+\delta)^\beta}-\nabla\cdot K_{st^\beta}\|_{\frac{r}{r-1}}\|\nabla v\|_r \|n\|_q\,ds\,d\tau.
\end{split}
\end{equation}
Recall that $K_t\in C^\infty((0,\infty)\times \mathbb{R}^d)$, $I_{32}$ goes to zero as $\delta \to 0$. Thus, \eqref{loH1conI31}-\eqref{loH1conI32} imply that $I_3$ goes to zero as $\delta \to 0$. At this point, this verifies that $\mathcal{H}_1(n,v,u)\in C((0,T],L^q(\mathbb{R}^d))$.

We next estimate $\|\nabla\mathcal{H}_2(n,v,u)\|_{r}$ and $\|\mathcal{H}_3(n,v,u)\|_{p}$. Once again we employ \eqref{mildsoluH}, Proposition \ref{pro:MLopLpqesti}, H\"{o}lder's inequality and the boundedness of the projection operator $P$ to obtain
\begin{equation}\label{localmildH2}
\begin{split}
\|\nabla\mathcal{H}_2(n,v,u)\|_{r}\leq& \|\nabla E_\beta(t^\beta(-(-\Delta)^{\frac{\alpha}{2}})-\gamma)v_0\|_{r}\\
&+\int_0^t(t-\tau)^{\beta-1}\|\nabla E_{\beta,\beta}((t-\tau)^\beta(-(-\Delta)^{\frac{\alpha}{2}})-\gamma)(u\cdot\nabla v)\|_{r}\,d\tau\\
&+\int_0^t(t-\tau)^{\beta-1}\|\nabla E_{\beta,\beta}((t-\tau)^\beta(-(-\Delta)^{\frac{\alpha}{2}})-\gamma)n\|_{r}\,d\tau\\
\leq&\|\nabla v_0\|_{r}+CT^{-\frac{d\beta}{\alpha}(\frac{1}{p}-\frac{\alpha-1}{d})}\|u\|_{C((0,T];L^p(\mathbb{R}^d))}\|\nabla v\|_{C((0,T];L^r(\mathbb{R}^d))}\\
&+CT^{-\frac{d\beta}{\alpha}(\frac{1}{q}-\frac{1}{r})-\frac{\beta}{\alpha}+\beta}\|n\|_{C((0,T];L^q(\mathbb{R}^d))},
\end{split}
\end{equation}
and
\begin{equation}\label{localmildH3}
\begin{split}
\|\mathcal{H}_3(n,v,u)\|_{p}\leq& \|E_\beta(-t^\beta(-\Delta)^{\frac{\alpha}{2}})u_0\|_{p}+\int_0^t(t-\tau)^{\beta-1}\|E_{\beta,\beta}(-(t-\tau)^\beta(-\Delta)^{\frac{\alpha}{2}})[P(n\nabla \phi)]\|_{p}\,d\tau\\
&+\int_0^t(t-\tau)^{\beta-1}\|\nabla\cdot E_{\beta,\beta}(-(t-\tau)^\beta(-\Delta)^{\frac{\alpha}{2}})[P(u\otimes u)]\|_{p}\,d\tau\\
\leq&\|u_0\|_{p}+ C\int_0^t(t-\tau)^{-\frac{d\beta}{\alpha p}-\frac{\beta}{\alpha}+\beta-1}\|u\|^2_{p}\,d\tau\\
&+ C\int_0^t(t-\tau)^{-\frac{d\beta}{\alpha }(\frac{1}{q}-\frac{1}{p})-\frac{\beta}{\alpha}+\beta-1}\|\nabla \phi\|_{d}\|n\|_{q}\,d\tau\\
\leq&\|u_0\|_{p}+CT^{-\frac{d\beta}{\alpha}(\frac{1}{p}-\frac{\alpha-1}{d})}\|u\|^2_{C((0,T];L^p(\mathbb{R}^d))}
+CT^{-\frac{d\beta}{\alpha}(\frac{1}{q}-\frac{1}{p})-\frac{\beta}{\alpha}+\beta}\|n\|_{C((0,T];L^q(\mathbb{R}^d))},
\end{split}
\end{equation}
provided $p>\frac{d}{\alpha-1}$, $0\leq\frac{1}{q}-\frac{1}{r}<\frac{\alpha-1}{d}$ and $0\leq\frac{1}{q}-\frac{1}{p}+\frac{1}{d}<\frac{\alpha}{d}$.

On the basis of \eqref{localmildH2} and \eqref{localmildH3}, the time continuity can be proved with the same argument as the proof of $\mathcal{H}_1(n,v,u)\in C((0,T];L^q(\mathbb{R}^d))$, which yields that $\nabla\mathcal{H}_2(n,v,u)\in C((0,T];L^r(\mathbb{R}^d))$ and $\mathcal{H}_3(n,v,u)\in C((0,T];L^p(\mathbb{R}^d))$.

Combining the inequalities \eqref{localmildH1}, \eqref{localmildH2} and \eqref{localmildH3} leads to
\begin{equation*}
\begin{split}
&\|\mathcal{H}_1(n,v,u)\|_{q}+\|\nabla\mathcal{H}_2(n,v,u)\|_{r}+\|\mathcal{H}_3(n,v,u)\|_{p}\\
\leq& \|n_0\|_{p}+\|\nabla v_0\|_{r}+\|u_0\|_{p}+C_1\big(T^{-\frac{d\beta}{\alpha}(\frac{1}{q}-\frac{1}{r}-\frac{\alpha-1}{d})}
+T^{-\frac{d\beta}{\alpha}(\frac{1}{q}-\frac{1}{p}-\frac{\alpha-1}{d})}\big)\| n\|_{C((0,T];L^q(\mathbb{R}^d))}\\
&+C_2T^{-\frac{d\beta}{\alpha}(\frac{1}{r}-\frac{\alpha-1}{d})}\|\nabla v\|_{C((0,T];L^r(\mathbb{R}^d))}\| n\|_{C((0,T];L^q(\mathbb{R}^d))}\\
&+C_3T^{-\frac{d\beta}{\alpha}(\frac{1}{p}-\frac{\alpha-1}{d})}\big(\|u\|_{C((0,T];L^p(\mathbb{R}^d))}\| n\|_{C((0,T];L^q(\mathbb{R}^d))}+\|u\|_{C((0,T];L^p(\mathbb{R}^d))}\|\nabla v\|_{C((0,T];L^r(\mathbb{R}^d))}\\
&~~~~~~~~~~~~~~~~~~~~~~~+\|u\|^2_{C((0,T];L^p(\mathbb{R}^d))}\big)\\
\leq&M_0+2C_1M_0\big(T^{-\frac{d\beta}{\alpha}(\frac{1}{q}-\frac{1}{r}-\frac{\alpha-1}{d})}+T^{-\frac{d\beta}{\alpha}(\frac{1}{q}-\frac{1}{p}-\frac{\alpha-1}{d})}\big)
+4\tilde{C}_2M_0^2\big(T^{-\frac{d\beta}{\alpha}(\frac{1}{r}-\frac{\alpha-1}{d})}+T^{-\frac{d\beta}{\alpha}(\frac{1}{p}-\frac{\alpha-1}{d})}\big).
\end{split}
\end{equation*}
If we choose
\[
T_0:=\min\{(8C_1)^{\frac{\alpha q r}{\beta((r-q)d-(\alpha-1)qr)}},
(8C_1)^{\frac{\alpha q p}{\beta((p-q)d-(\alpha-1)qp)}},(16\tilde{C}_2M_0)^{\frac{\alpha r}{\beta(d-(\alpha-1)r)}},
(16\tilde{C}_2M_0)^{\frac{\alpha p}{\beta(d-(\alpha-1)p)}}\},
\]
it is easily to check that for $0<t\leq T_0$,
\[
\|\mathcal{H}_1(n,v,u)\|_{q}+\|\nabla\mathcal{H}_2(n,v,u)\|_{r}+\|\mathcal{H}_3(n,v,u)\|_{p}\leq 2M_0.
\]
Thus, we conclude that $\mathcal{H}(n,v,u): S\rightarrow S$.

Let $(n_1,v_1,u_1), (n_2,v_2,u_2)\in X_{T}$. Define
\[
\begin{split}
D_T(n_1-n_2,v_1-v_2,u_1-u_2):=&\sup_{0\leq t\leq T}\|(n_1-n_2)(t)\|_{q}\\
&+\sup_{0< t\leq T}\|\nabla (v_1-v_2)(t)\|_{r}+\sup_{0< t\leq T}\|(u_1-u_2)(t)\|_{p}.
\end{split}
\]
We proceed in the same way as the estimates of \eqref{localmildH1}, \eqref{localmildH2} and \eqref{localmildH3} to obtain
\begin{equation}\label{localH11-2}
\begin{split}
&\|\mathcal{H}_1(n_1,v_1,u_1)-\mathcal{H}_1(n_2,v_2,u_2)\|_{q}\\
\leq& CT^{-\frac{d\beta}{\alpha}(\frac{1}{p}-\frac{\alpha-1}{d})}\|u_1-u_2\|_{C((0,T];L^p(\mathbb{R}^d))}\|n_1\|_{C((0,T];L^q(\mathbb{R}^d))}\\
&+CT^{-\frac{d\beta}{\alpha}(\frac{1}{p}-\frac{\alpha-1}{d})}\|n_1-n_2\|_{C((0,T];L^q(\mathbb{R}^d))}\|u_2\|_{C((0,T];L^p(\mathbb{R}^d))}\\
&+CT^{-\frac{d\beta}{\alpha}(\frac{1}{r}-\frac{\alpha-1}{d})}\|n_1-n_2\|_{C((0,T];L^q(\mathbb{R}^d))}\|\nabla v_1\|_{C((0,T];L^r(\mathbb{R}^d))}\\
&+CT^{-\frac{d\beta}{\alpha}(\frac{1}{r}-\frac{\alpha-1}{d})}\|\nabla(v_1-v_2)\|_{C((0,T];L^r(\mathbb{R}^d))}\|n_2\|_{C((0,T];L^q(\mathbb{R}^d))}\\
\leq&2C_4M_0\big(T^{-\frac{d\beta}{\alpha}(\frac{1}{p}-\frac{\alpha-1}{d})}+T^{-\frac{d\beta}{\alpha}(\frac{1}{r}-\frac{\alpha-1}{d})}\big) D_T(n_1-n_2,v_1-v_2,u_1-u_2),
\end{split}
\end{equation}
\begin{equation}\label{localH21-2}
\begin{split}
&\|\nabla(\mathcal{H}_2(n_1,v_1,u_1)-\mathcal{H}_2(n_2,v_2,u_2))\|_{r}\\
\leq& CT^{-\frac{d\beta}{\alpha}(\frac{1}{p}-\frac{\alpha-1}{d})}\|u_1-u_2\|_{C((0,T];L^p(\mathbb{R}^d))}\|\nabla v_1\|_{C((0,T];L^r(\mathbb{R}^d))}\\
&+CT^{-\frac{d\beta}{\alpha}(\frac{1}{p}-\frac{\alpha-1}{d})}\|\nabla(v_1-v_2)\|_{C((0,T];L^r(\mathbb{R}^d))}\|u_2\|_{C((0,T];L^p(\mathbb{R}^d))}\\
&+CT^{-\frac{d\beta}{\alpha}(\frac{1}{q}-\frac{1}{r}-\frac{\alpha-1}{d})}\|n_1-n_2\|_{C((0,T];L^q(\mathbb{R}^d))}\\
\leq&C_5\big(2M_0T^{-\frac{d\beta}{\alpha}(\frac{1}{p}-\frac{\alpha-1}{d})}+T^{-\frac{d\beta}{\alpha}(\frac{1}{q}-\frac{1}{r}-\frac{\alpha-1}{d})}\big) D_T(n_1-n_2,v_1-v_2,u_1-u_2),
\end{split}
\end{equation}
and
\begin{equation}\label{localH31-2}
\begin{split}
&\|\mathcal{H}_3(n_1,v_1,u_1)-\mathcal{H}_3(n_2,v_2,u_2)\|_{p}\\
&+CT^{-\frac{d\beta}{\alpha}(\frac{1}{p}-\frac{\alpha-1}{d})}\|u_1-u_2\|_{C((0,T];L^r(\mathbb{R}^d))}(\|u_1\|_{C((0,T];L^p(\mathbb{R}^d))}+\|u_2\|_{C((0,T];L^p(\mathbb{R}^d))})\\
&+CT^{-\frac{d\beta}{\alpha}(\frac{1}{q}-\frac{1}{p}-\frac{\alpha-1}{d})}\|n_1-n_2\|_{C((0,T];L^q(\mathbb{R}^d))}\\
\leq&C_6\big(2M_0T^{-\frac{d\beta}{\alpha}(\frac{1}{p}-\frac{\alpha-1}{d})}+T^{-\frac{d\beta}{\alpha}(\frac{1}{q}-\frac{1}{p}-\frac{\alpha-1}{d})}\big) D_T(n_1-n_2,v_1-v_2,u_1-u_2).
\end{split}
\end{equation}
Combining the above estimates \eqref{localH11-2}-\eqref{localH31-2}, it holds that
\[
\begin{split}
&D_T(\mathcal{H}(n_1,v_1,u_1)-\mathcal{H}(n_2,v_2,u_2))\leq \big(\tilde{C}_3M_0T^{-\frac{d\beta}{\alpha}(\frac{1}{p}-\frac{\alpha-1}{d})}+2C_4M_0T^{-\frac{d\beta}{\alpha}(\frac{1}{r}-\frac{\alpha-1}{d})}\\
&~~~~~+C_5T^{-\frac{d\beta}{\alpha}(\frac{1}{q}-\frac{1}{r}-\frac{\alpha-1}{d})}
+C_6T^{-\frac{d\beta}{\alpha}(\frac{1}{q}-\frac{1}{p}-\frac{\alpha-1}{d})}\big) D_T(n_1-n_2,v_1-v_2,u_1-u_2).
\end{split}
\]
Then choose $T_1$ to satisfy
\begin{small}
\[
T_1:=\min\{T_0, \big(\frac{4\tilde{C}_3M_0}{\varrho}\big)^{\frac{\alpha p}{\beta(d-(\alpha-1)q)}},
\big(\frac{8C_4M_0}{\varrho}\big)^{\frac{\alpha r}{\beta(d-(\alpha-1)q)}},\big(\frac{4C_5}{\varrho}\big)^{\frac{\alpha q r}{\beta((r-q)d-(\alpha-1)qr)}},
\big(\frac{4C_6}{\varrho}\big)^{\frac{\alpha p q}{\beta((p-q)d-(\alpha-1)qp)}}\},
\]
\end{small}
we conclude that $\mathcal{H}:S\rightarrow S$ is a strict contraction mapping, that is there exists a constant $0<\varrho<1$ such that
\begin{equation*}
D_T(\mathcal{H}(n_1,v_1,u_1)-\mathcal{H}(n_2,v_2,u_2))\leq \varrho D_T(n_1-n_2,v_1-v_2,u_1-u_2).
\end{equation*}

Therefore, it follows from Banach fixed point theorem that there exists $(n,v,u)\in S$ such that $(n,v,u)=\mathcal{H}(n,v,u)$ is the unique local mild solution to system \eqref{chemo-fluid-eq}.

Finally, we prove the claim regarding $T_{max}$ by contradiction. Assume that
\[
\limsup_{t\to T_{max}^-}\|n(\cdot, t)\|_{q}<\infty,~~~\limsup_{t\to T_{max}^-}\|\nabla v(\cdot, t)\|_{r}<\infty,~~~\limsup_{t\to T_{max}^-}\|u(\cdot, t)\|_{p}<\infty.
\]
Following the same approach as we show $\mathcal{H}_1(n,v,u)\in C((0,T];L^q(\mathbb{R}^d))$, $\nabla\mathcal{H}_2(n,v,u)\in C((0,T];L^r(\mathbb{R}^d))$ and $\mathcal{H}_3(n,v,u)\in C((0,T];L^p(\mathbb{R}^d))$, for any $\epsilon>0$, there exists $\delta>0$ such that if $T_{max}-\delta<t_1<t_2<T_{max}$,
\[
\|n(t_1)-n(t_2)\|_q<\epsilon,~~~~\|\nabla v(t_1)-\nabla v(t_2)\|_r<\epsilon,~~~~\|u(t_1)-u(t_2)\|_p<\epsilon.
\]
Hence, we can define $n(T_{max})$, $v(T_{max})$ and $u(T_{max})$ such that
\[
n\in C((0,T_{max}];L^q(\mathbb{R}^d)),\nabla v \in C((0,T_{max}];L^r(\mathbb{R}^d)),u\in C((0,T_{max}];L^p(\mathbb{R}^d)).
\]
Now we consider
\begin{equation}\label{Tmaxmildsolu-n}
\begin{aligned}
\check{n}=&E_\beta(-(T_{max}+t)^\beta(-\Delta)^{\frac{\alpha}{2}})n_0\\
&-\int_0^{T_{max}}(T_{max}+t-\tau)^{\beta-1}\nabla\cdot E_{\beta,\beta}(-(T_{max}+t-\tau)^\beta(-\Delta)^{\frac{\alpha}{2}})(u n+n\nabla v)\,d\tau\\
&-\int_0^t(t-\tau)^{\beta-1}\nabla\cdot E_{\beta,\beta}(-(t-\tau)^\beta(-\Delta)^{\frac{\alpha}{2}})(\check{u} \check{n}+\check{n}\nabla \check{v})\,d\tau\\
:=&\check{n}_1+\check{n}_2+\check{n}_3,
\end{aligned}
\end{equation}
\begin{equation}\label{Tmaxmildsolu-v}
\begin{aligned}
\check{v}=&E_\beta\big((T_{max}+t)^\beta(-(-\Delta)^{\frac{\alpha}{2}}-\gamma)\big)v_0\\
  &-\int_0^{T_{max}}(T_{max}+t-\tau)^{\beta-1} E_{\beta,\beta}\big((T_{max}+t-\tau)^\beta(-(-\Delta)^{\frac{\alpha}{2}}-\gamma)\big)(u\cdot \nabla v-n)\,d\tau\\
  &-\int_0^t(t-\tau)^{\beta-1}E_{\beta,\beta}\big((t-\tau)^\beta(-(-\Delta)^{\frac{\alpha}{2}}-\gamma)\big)(\check{u}\cdot\nabla \check{v}-\check{n})\,d\tau\\
  :=&\check{v}_1+\check{v}_2+\check{v}_3,
\end{aligned}
\end{equation}
\begin{equation}\label{Tmaxmildsolu-u}
\begin{aligned}
\check{u}=&E_\beta(-(T_{max}+t) ^\beta(-\Delta)^{\frac{\alpha}{2}})u_0\\
  &-\int_0^{T_{max}}(T_{max}+t-\tau)^{\beta-1}E_{\beta,\beta}(-(T_{max}+t-\tau)^\beta(-\Delta)^{\frac{\alpha}{2}})[P((u\cdot\nabla) u+n\nabla\phi)]\,d\tau\\
  &-\int_0^t(t-\tau)^{\beta-1}E_{\beta,\beta}(-(t-\tau)^\beta(-\Delta)^{\frac{\alpha}{2}})[P((\check{u}\cdot\nabla) \check{u}+\check{n}\nabla\phi)]\,d\tau\\
  :=&\check{u}_1+\check{u}_2+\check{u}_3.
 \end{aligned}
\end{equation}
For some $\delta_1>0$, employ \eqref{conMLLpq1} and \eqref{conMLLpqgamma1} in Proposition \ref{pro:MLopconti} to deduce that
\[
\check{n}_1\in C((0,\delta_1];L^q(\mathbb{R}^d)),~~\nabla\check{v}_1\in C((0,\delta_1];L^r(\mathbb{R}^d)),~~\check{u}_1\in C((0,\delta_1];L^p(\mathbb{R}^d)).
\]
Following the same approach as we show $\mathcal{H}_1(n,v,u)\in C((0,T];L^q(\mathbb{R}^d))$, $\nabla\mathcal{H}_2(n,v,u)\in C((0,T];L^r(\mathbb{R}^d))$, $\mathcal{H}_3(n,v,u)\in C((0,T];L^p(\mathbb{R}^d))$, we find that
\[
\check{n}_2\in C((0,\delta_1];L^q(\mathbb{R}^d)),~~\nabla \check{v}_2\in C((0,\delta_1];L^r(\mathbb{R}^d)),~~ \check{u}_2\in C((0,\delta_1];L^p(\mathbb{R}^d)).
\]
Repeating what has been just done, it holds that
\[
\check{n}_3\in C((0,\delta_1];L^q(\mathbb{R}^d)),~~\nabla \check{v}_3\in C((0,\delta_1];L^r(\mathbb{R}^d)),~~ \check{u}_3\in C((0,\delta_1];L^p(\mathbb{R}^d)).
\]
Then we see that \eqref{Tmaxmildsolu-n}-\eqref{Tmaxmildsolu-u} has a unique mild solution $(\check{n},\check{v},\check{u})$ with the property
\[
\check{n}\in C((0,\delta_1];L^q(\mathbb{R}^d)),~\nabla \check{v} \in C((0,\delta_1];L^r(\mathbb{R}^d)),~ \check{u}\in C((0,\delta_1];L^p(\mathbb{R}^d)).
\]
For $t\in (0,\delta_1]$, if we define
\[n(T_{max}+t)=\check{n}(t), ~~v(T_{max}+t)=\check{v}(t), ~~ u(T_{max}+t)=\check{u}(t),
\]
then the fact that $(\check{n},\check{v},\check{u})$ is a mild solution on $(0,T_{max}+\delta_1)$ contradicts with the definition of $T_{max}$.
\end{proof}

\subsection{Proof of Theorem \ref{thm:mildexis}}\label{3.2}

\begin{proof}
\textbf{Step 1.}(Two functional spaces $\mathcal{X},\mathcal{Y}$ and the map $F$)
In this proof, we define
\[
\mathcal{X}:=\{[n_0,v_0,u_0,\phi]|n_0\in L^{\frac{d}{2\alpha-2}}(\mathbb{R}^d),\nabla v_0\in L^{\frac{d}{\alpha-1}}(\mathbb{R}^d), u_0\in L^{\frac{d}{\alpha-1}}(\mathbb{R}^d), \nabla \phi\in L^{d}(\mathbb{R}^d)\}
\]
with the norm
\begin{gather*}
\|[n_0,v_0,u_0,\phi]\|_\mathcal{X}:=\|n_0\|_{\frac{d}{2\alpha-2}}+\|\nabla v_0\|_{\frac{d}{\alpha-1}}+\|u_0\|_{\frac{d}{\alpha-1}}+\|\nabla \phi\|_{d},
\end{gather*}
and
\[
\begin{split}
\mathcal{Y}:=\{[n,v,u]|&t^{\frac{d\beta}{\alpha}(\frac{2\alpha-2}{d}-\frac{1}{q})}n\in C((0,\infty);L^q(\mathbb{R}^d)),
t^{\frac{d\beta}{\alpha}(\frac{\alpha-1}{d}-\frac{1}{r})}\nabla v\in C((0,\infty);L^r(\mathbb{R}^d)),\\
&t^{\frac{d\beta}{\alpha}(\frac{\alpha-1}{d}-\frac{1}{p})}u\in C((0,\infty);L^p(\mathbb{R}^d))\}
\end{split}
\]
with the norm
\begin{gather*}
\|[n,v,u]\|_\mathcal{Y}:=\sup_{0<t<\infty}t^{\frac{d\beta}{\alpha}(\frac{2\alpha-2}{d}-\frac{1}{q})}\|n(t)\|_{q}
+\sup_{0<t<\infty}t^{\frac{d\beta}{\alpha}(\frac{\alpha-1}{d}-\frac{1}{r})}\|\nabla v(t)\|_{r}+\sup_{0<t<\infty}t^{\frac{d\beta}{\alpha}(\frac{\alpha-1}{d}-\frac{1}{p})}\|u(t)\|_{p}.
\end{gather*}
It is clear that the space $\mathcal{X}$ and $\mathcal{Y}$ equipped with the norm $\|\cdot\|_\mathcal{X}$ and $\|\cdot\|_\mathcal{Y}$ are Banach spaces.

For $0<t<\infty$, $[n_0,v_0,u_0,\phi]\in \mathcal{X}$ and $[n,v,u]\in \mathcal{Y}$, we define the map $F$ as follows
\[
F(n_0,v_0,u_0,\phi,n,v,u):=[\tilde{n},\tilde{v},\tilde{u}],
\]
\begin{equation}\label{mapfmildsolu}
\left\{
\begin{aligned}
\tilde{n}(t)&=n(t)-E_\beta(-t^\beta(-\Delta)^{\frac{\alpha}{2}})n_0+\int_0^t(t-\tau)^{\beta-1}E_{\beta,\beta}\big(-(t-\tau)^\beta (-\Delta)^{\frac{\alpha}{2}}\big)(u\cdot\nabla n)\,d\tau\\
&~~~~+\int_0^t(t-\tau)^{\beta-1}\nabla \cdot E_{\beta,\beta}\big(-(t-\tau)^\beta (-\Delta)^{\frac{\alpha}{2}}\big)(n\nabla v)\,d\tau,\\
\tilde{v}(t)&=v(t)-E_\beta(t^\beta(-(-\Delta)^{\frac{\alpha}{2}}-\gamma))v_0-\int_0^t (t-\tau)^{\beta-1}E_{\beta,\beta}\big((t-\tau)^\beta (-(-\Delta)^{\frac{\alpha}{2}}-\gamma)\big)n\,d\tau\\
&~~~~+\int_0^t(t-\tau)^{\beta-1}E_{\beta,\beta}\big((t-\tau)^\beta (-(-\Delta)^{\frac{\alpha}{2}}-\gamma)\big)(u\cdot\nabla v)\,d\tau,\\
\tilde{u}(t)&=u(t)-E_\beta(-t^\beta(-\Delta)^{\frac{\alpha}{2}})u_0+\int_0^t(t-\tau)^{\beta-1}E_{\beta,\beta}(-(t-\tau)^\beta (-\Delta)^{\frac{\alpha}{2}})\\
  &~~~~\times[P((u\cdot\nabla) u+n\nabla\phi)]\,d\tau.
\end{aligned}
\right.
\end{equation}

\textbf{Step 2.}($F:\mathcal{X}\times \mathcal{Y}\rightarrow \mathcal{Y}$ is a continuous map)
To begin with, we show that $t^{\frac{d\beta}{\alpha}(\frac{2\alpha-2}{d}-\frac{1}{q})}\tilde{n}\in C((0,\infty);L^q(\mathbb{R}^d))$.
Based on \eqref{mapfmildsolu}, we estimate $\|\tilde{n}\|_{q}$ as follows.
\begin{equation}\label{nLqesti}
\begin{split}
\|\tilde{n}(t)\|_{q}&\leq\|n(t)\|_{q}+\|E_\beta(-t^\beta(-\Delta)^{\frac{\alpha}{2}})n_0\|_{q}\\
&~~~~+\int_0^t(t-\tau)^{\beta-1}
\|\nabla\cdot E_{\beta,\beta}\big(-(t-\tau)^\beta (-\Delta)^{\frac{\alpha}{2}}\big)(u n)\|_{q}\,d\tau\\
&~~~~+\int_0^t(t-\tau)^{\beta-1}
\|\nabla \cdot E_{\beta,\beta}\big(-(t-\tau)^\beta (-\Delta)^{\frac{\alpha}{2}}\big)(n\nabla v)\|_{q}\,d\tau\\
&:=A_0+A_1+A_2+A_3.
\end{split}
\end{equation}
Now, applying the $L^p-L^q$ estimates of Mittag-Leffler operators in Proposition \ref{pro:MLopLpqesti}, we estimate $A_1,A_2$ and $A_3$, respectively.
For the term $A_1$, recalling \eqref{MLLpq1}, if $q>\frac{d}{2\alpha-2}$, we arrive at
\begin{gather}\label{A1esti}
A_1\leq Ct^{-\frac{d\beta}{\alpha}(\frac{2\alpha-2}{d}-\frac{1}{q})}\| n_0\|_{\frac{d}{2\alpha-2}}.
\end{gather}
As for $A_2$, based on \eqref{MLLpq4} and H\"{o}lder's inequality, we obtain
\begin{gather}\label{A2esti}
\begin{split}
A_2
&\leq C\int_0^t(t-\tau)^{-\frac{d\beta}{\alpha}(\frac{1}{p}+\frac{1}{q}-\frac{1}{q})-\frac{\beta}{\alpha}+\beta-1}
\|un(\tau)\|_{\frac{pq}{p+q}}\,d\tau\\
&\leq C\int_0^t(t-\tau)^{-\frac{d\beta}{\alpha p}-\frac{\beta}{\alpha}+\beta-1}\tau^{\frac{d\beta}{\alpha}(\frac{1}{p}
+\frac{1}{q}-\frac{3\alpha-3}{d})}\tau^{\frac{d\beta}{\alpha}(\frac{\alpha-1}{d}-\frac{1}{p})}\|u(\tau)\|_{p}
\tau^{\frac{d\beta}{\alpha}(\frac{2\alpha-2}{d}-\frac{1}{q})}\|n(\tau)\|_{q}\,d\tau\\
&\leq C \sup_{\tau>0}\tau^{\frac{d\beta}{\alpha}(\frac{\alpha-1}{d}-\frac{1}{p})}\|u(\tau)\|_{p}
\sup_{\tau>0}\tau^{\frac{d\beta}{\alpha}(\frac{2\alpha-2}{d}-\frac{1}{q})}\|n(\tau)\|_{q} \\
&~~~~\times t^{-\frac{d\beta}{\alpha}(\frac{2\alpha-2}{d}-\frac{1}{q})}\mathbf{B}(-\frac{d\beta}{\alpha p}-\frac{\beta}{\alpha}+\beta, \frac{d\beta}{\alpha p}+\frac{d\beta}{\alpha q}-\frac{3\beta(\alpha-1)}{\alpha}+1),
\end{split}
\end{gather}
where conditions (I), (II), (III), (IV) and (V) in Assumption \ref{assum1} ensure that
\[
-\frac{d\beta}{\alpha p}-\frac{\beta}{\alpha}+\beta>0,~~~\frac{d\beta}{\alpha p}+\frac{d\beta}{\alpha q}-\frac{3\beta(\alpha-1)}{\alpha}+1>0.
\]
Hence, we deduce that the Beta function
$
\mathbf{B}(-\frac{d\beta}{\alpha p}-\frac{\beta}{\alpha}+\beta, \frac{d\beta}{\alpha p}+\frac{d\beta}{\alpha q}-\frac{3\beta(\alpha-1)}{\alpha}+1)\leq C
$, then \eqref{A2esti} satisfies the following estimates
\begin{equation}\label{0A2esti}
A_2\leq C t^{-\frac{d\beta}{\alpha}(\frac{2\alpha-2}{d}-\frac{1}{q})} \sup_{\tau>0}\tau^{\frac{d\beta}{\alpha}(\frac{\alpha-1}{d}-\frac{1}{p})}\|u(\tau)\|_{p}
\sup_{\tau>0}\tau^{\frac{d\beta}{\alpha}(\frac{2\alpha-2}{d}-\frac{1}{q})}\|n(\tau)\|_{q}.
\end{equation}
As regards $A_3$, utilizing \eqref{MLLpq4} and H\"{o}lder's inequality again, we derive
\begin{gather}\label{A3esti}
\begin{split}
A_3&\leq C\int_0^t(t-\tau)^{-\frac{d\beta}{\alpha}(\frac{1}{r}+\frac{1}{q}-\frac{1}{q})-\frac{\beta}{\alpha}+\beta-1}
\|(n\nabla v)(\tau)\|_{\frac{qr}{q+r}}\,d\tau\\
&\leq C\int_0^t(t-\tau)^{-\frac{d\beta}{\alpha r}-\frac{\beta}{\alpha}+\beta-1}\tau^{\frac{d\beta}{\alpha}(\frac{1}{r}
+\frac{1}{q}-\frac{3\alpha-3}{d})}\tau^{\frac{d\beta}{\alpha}(\frac{2\alpha-2}{d}-\frac{1}{q})}\|n(\tau)\|_{q}
\tau^{\frac{d\beta}{\alpha}(\frac{\alpha-1}{d}-\frac{1}{r})}\|\nabla v(\tau)\|_{r}\,d\tau\\
&\leq C \sup_{\tau>0}\tau^{\frac{d\beta}{\alpha}(\frac{\alpha-1}{d}-\frac{1}{r})}\|\nabla v(\tau)\|_{r}\sup_{\tau>0}\tau^{\frac{d\beta}{\alpha}(\frac{2\alpha-2}{d}-\frac{1}{q})}\|n(\tau)\|_{q}\\
&~~~~~~~\times t^{-\frac{d\beta}{\alpha}(\frac{2\alpha-2}{d}-\frac{1}{q})}\mathbf{B}(-\frac{d\beta}{\alpha r}-\frac{\beta}{\alpha}+\beta, \frac{d\beta}{\alpha q}+\frac{d\beta}{\alpha r}-\frac{3\beta(\alpha-1)}{\alpha}+1).
\end{split}
\end{gather}
Since conditions (I), (II), (III), (IV) and (V) in Assumption \ref{assum1} imply that
\[
-\frac{d\beta}{\alpha r}-\frac{\beta}{\alpha}+\beta>0,~~\frac{d\beta}{\alpha q}+\frac{d\beta}{\alpha r}-\frac{3\beta(\alpha-1)}{\alpha}+1>0,
\]
we have
\begin{equation}\label{0A3esti}
A_3\leq C t^{-\frac{d\beta}{\alpha}(\frac{2\alpha-2}{d}-\frac{1}{q})} \sup_{\tau>0}\tau^{\frac{d\beta}{\alpha}(\frac{\alpha-1}{d}-\frac{1}{r})}\|\nabla v(\tau)\|_{p}
\sup_{\tau>0}\tau^{\frac{d\beta}{\alpha}(\frac{2\alpha-2}{d}-\frac{1}{q})}\|n(\tau)\|_{q}.
\end{equation}
Combining the above estimates \eqref{A1esti}, \eqref{0A2esti} and \eqref{0A3esti}, one has
\begin{equation}\label{nestbdd}
\begin{split}
\sup_{t>0}&t^{\frac{d\beta}{\alpha}(\frac{2\alpha-2}{d}-\frac{1}{q})}\|\tilde{n}(t)\|_{q}\leq C\| n_0\|_{\frac{d}{2\alpha-2}}
+C\sup_{t>0}t^{\frac{d\beta}{\alpha}(\frac{2\alpha-2}{d}-\frac{1}{q})}\|n(t)\|_{q}\\
&\times\big(1+\sup_{t>0}t^{\frac{d\beta}{\alpha}(\frac{\alpha-1}{d}-\frac{1}{p})}\|u(t)\|_{p}+
\sup_{t>0}t^{\frac{d\beta}{\alpha}(\frac{\alpha-1}{d}-\frac{1}{r})}\|\nabla v(t)\|_{r}\big).
\end{split}
\end{equation}
The time continuity can be proved with the same argument as the proof of Theorem \ref{thm:localmildexis}, then we claim that
\[
t^{\frac{d\beta}{\alpha}(\frac{2\alpha-2}{d}-\frac{1}{q})}\tilde{n}\in C((0,\infty);L^q(\mathbb{R}^d)).
\]

In views of \eqref{mapfmildsolu}, $\|\nabla \tilde{v}(t)\|_{r}$ can be written as
\begin{equation}\label{vtildeLresti}
\begin{split}
\|\nabla\tilde{v}(t)\|_{r}&\leq\|\nabla v(t)\|_{r}+\|E_\beta(t^\beta(-(-\Delta)^{\frac{\alpha}{2}}-\gamma))\nabla v_0\|_{r}\\
&~~~~+\int_0^t(t-\tau)^{\beta-1}\|\nabla E_{\beta,\beta}\big((t-\tau)^\beta (-(-\Delta)^{\frac{\alpha}{2}}-\gamma)\big)(u\cdot\nabla v)(\tau)\|_{r}\,d\tau\\
&~~~~+\int_0^t(t-\tau)^{\beta-1}\|\nabla E_{\beta,\beta}\big((t-\tau)^\beta (-(-\Delta)^{\frac{\alpha}{2}}-\gamma)\big)n(\tau)\|_{r}\,d\tau\\
&:=Q_0+Q_1+Q_2+Q_3.
\end{split}
\end{equation}
For the term $Q_1$, by virtue of \eqref{MLLpq1} in Proposition \ref{pro:MLopLpqesti}, it holds that for $r>\frac{d}{\alpha-1}$,
\begin{gather}\label{Q1esti}
Q_1\leq Ct^{-\frac{d\beta}{\alpha}(\frac{\alpha-1}{d}-\frac{1}{r})}\|\nabla v_0\|_{\frac{d}{\alpha-1}}.
\end{gather}
As for $Q_2$, based on \eqref{MLLpqgamma4} and H\"{o}lder's inequality, we calculate that
\begin{gather}\label{Q2esti1}
\begin{split}
Q_2&\leq C\int_0^t(t-\tau)^{-\frac{d\beta}{\alpha}(\frac{1}{p}+\frac{1}{r}-\frac{1}{r})-\frac{\beta}{\alpha}+\beta-1}
\|(u\nabla v)(\tau)\|_{\frac{pr}{p+r}}\,d\tau\\
&\leq C\int_0^t(t-\tau)^{-\frac{d\beta}{\alpha p}-\frac{\beta}{\alpha}+\beta-1}\|u(\tau)\|_{p}\|\nabla v(\tau)\|_{r}\,d\tau\\
&\leq C\int_0^t(t-\tau)^{-\frac{d\beta}{\alpha p}-\frac{\beta}{\alpha}+\beta-1}\tau^{\frac{d\beta}{\alpha}(\frac{1}{p}
+\frac{1}{r}-\frac{2\alpha-2}{d})}\tau^{\frac{d\beta}{\alpha}(\frac{\alpha-1}{d}-\frac{1}{p})}\|u(\tau)\|_{p}
\tau^{\frac{d\beta}{\alpha}(\frac{\alpha-1}{d}-\frac{1}{r})}\|\nabla v(\tau)\|_{r}\,d\tau\\
&\leq C \sup_{\tau>0}\tau^{\frac{d\beta}{\alpha}(\frac{\alpha-1}{d}-\frac{1}{p})}\|u(\tau)\|_{p}
\sup_{\tau>0}\tau^{\frac{d\beta}{\alpha}(\frac{\alpha-1}{d}-\frac{1}{r})}\|\nabla v(\tau)\|_{r} \\
&~~~~\times t^{-\frac{d\beta}{\alpha}(\frac{\alpha-1}{d}-\frac{1}{r})}\mathbf{B}(-\frac{d\beta}{\alpha p}-\frac{\beta}{\alpha}+\beta, \frac{d\beta}{\alpha p}+\frac{d\beta}{\alpha r}-\frac{2\beta(\alpha-1)}{\alpha}+1)\\
&\leq Ct^{-\frac{d\beta}{\alpha}(\frac{\alpha-1}{d}-\frac{1}{r})}\sup_{\tau>0}\tau^{\frac{d\beta}{\alpha}(\frac{\alpha-1}{d}-\frac{1}{p})}\|u\|_{p}
\sup_{\tau>0}\tau^{\frac{d\beta}{\alpha}(\frac{\alpha-1}{d}-\frac{1}{r})}\|\nabla v\|_{r},
\end{split}
\end{gather}
where we have used the fact
$$\mathbf{B}(-\frac{d\beta}{\alpha p}-\frac{\beta}{\alpha}+\beta, \frac{d\beta}{\alpha p}+\frac{d\beta}{\alpha r}-\frac{2\beta(\alpha-1)}{\alpha}+1)\leq C$$
due to conditions (I), (II), (III), (IV) and (V) in Assumption \ref{assum1} to derive the last inequality of \eqref{Q2esti1}.
Regarding $Q_3$, using \eqref{MLLpqgamma4} and H\"{o}lder's inequality, we get
\begin{gather}\label{Q3esti}
\begin{split}
Q_3&\leq C\int_0^t(t-\tau)^{-\frac{d\beta}{\alpha}(\frac{1}{q}-\frac{1}{r})-\frac{\beta}{\alpha}+\beta-1}
\|n(\tau)\|_{q}\,d\tau\\
&\leq C\int_0^t(t-\tau)^{-\frac{d\beta}{\alpha q}+\frac{d\beta}{\alpha r}-\frac{\beta}{\alpha}+\beta-1}\tau^{\frac{d\beta}{\alpha}(\frac{1}{q}-\frac{2\alpha-2}{d})}
\tau^{\frac{d\beta}{\alpha}(\frac{2\alpha-2}{d}-\frac{1}{q})}\|n(\tau)\|_{q}\,d\tau\\
&\leq C t^{-\frac{d\beta}{\alpha}(\frac{\alpha-1}{d}-\frac{1}{r})}\mathbf{B}(-\frac{d\beta}{\alpha q}+\frac{d\beta}{\alpha r}-\frac{\beta}{\alpha}+\beta, \frac{d\beta}{\alpha q}-\frac{2\beta(\alpha-1)}{\alpha}+1)\sup_{\tau>0}\tau^{\frac{d\beta}{\alpha}(\frac{2\alpha-2}{d}-\frac{1}{q})}\|n(\tau)\|_{q}\\
&\leq C t^{-\frac{d\beta}{\alpha}(\frac{\alpha-1}{d}-\frac{1}{r})}\sup_{\tau>0}\tau^{\frac{d\beta}{\alpha}(\frac{2\alpha-2}{d}-\frac{1}{q})}\|n(\tau)\|_{q} ,
\end{split}
\end{gather}
where Assumption \ref{assum1} ensure that $\mathbf{B}(-\frac{d\beta}{\alpha q}+\frac{d\beta}{\alpha r}-\frac{\beta}{\alpha}+\beta, \frac{d\beta}{\alpha q}-\frac{2\beta(\alpha-1)}{\alpha}+1)\leq C$.

Thus, \eqref{Q1esti}, \eqref{Q2esti1} and \eqref{Q3esti} lead to the following estimate,
\begin{equation}\label{vtildeestbdd}
\begin{split}
\sup_{t>0}t^{\frac{d\beta}{\alpha}(\frac{\alpha-1}{d}-\frac{1}{r})}\|\nabla \tilde{v}\|_{r}&\leq C\|\nabla v_0\|_{\frac{d}{\alpha-1}}+\sup_{t>0}\tau^{\frac{d\beta}{\alpha}(\frac{\alpha-1}{d}-\frac{1}{r})}\|\nabla v(\tau)\|_{r}\\
&~~+C\sup_{\tau>0}\tau^{\frac{d\beta}{\alpha}(\frac{\alpha-1}{d}-\frac{1}{p})}\|u(\tau)\|_{p}
\sup_{\tau>0}\tau^{\frac{d\beta}{\alpha}(\frac{\alpha-1}{d}-\frac{1}{r})}\|\nabla v(\tau)\|_{r}\\
&~~+C\sup_{\tau>0}\tau^{\frac{d\beta}{\alpha}(\frac{2\alpha-2}{d}-\frac{1}{q})}\|n(\tau)\|_{q}.
\end{split}
\end{equation}
The time continuity of $t^{\frac{d\beta}{\alpha}(\frac{\alpha-1}{d}-\frac{1}{r})}\nabla\tilde{v}$ can be verified similiarly as the proof of Theorem \ref{thm:localmildexis}. Therefore, we derive that
\[
t^{\frac{d\beta}{\alpha}(\frac{\alpha-1}{d}-\frac{1}{r})}\nabla\tilde{v}\in C((0,\infty);L^r(\mathbb{R}^d)).
\]

Next, we shall estimate $\|\tilde{u}\|_{p}$ with the aid of \eqref{mapfmildsolu},
\begin{equation}\label{uLpesti}
\begin{split}
\|\tilde{u}(t)\|_{p}&\leq\|u(t)\|_{p}+\|E_\beta(-t^\beta(-\Delta)^{\frac{\alpha}{2}}) u_0\|_{p}\\
&~~~+\int_0^t(t-\tau)^{\beta-1}\|\nabla\cdot E_{\beta,\beta}\big(-(t-\tau)^\beta (-\Delta)^{\frac{\alpha}{2}}\big)P(u\otimes u)(\tau)\|_{p}\,d\tau\\
&~~~+\int_0^t(t-\tau)^{\beta-1}\| E_{\beta,\beta}\big(-(t-\tau)^\beta (-\Delta)^{\frac{\alpha}{2}}\big)P(n\nabla\phi)(\tau)\|_{p}\,d\tau\\
&:=B_0+B_1+B_2+B_3.
\end{split}
\end{equation}
To estimate the term $B_1$, for $p>\frac{d}{\alpha-1}$, Proposition \ref{pro:MLopLpqesti} leads to
\begin{gather}\label{B1esti}
B_1\leq Ct^{-\frac{d\beta}{\alpha}(\frac{\alpha-1}{d}-\frac{1}{p})}\| u_0\|_{\frac{d}{\alpha-1}}.
\end{gather}
For the term $B_2$, due to conditions (I), (II), (III), (IV) and (V) in Assumption \ref{assum1}, it holds
\[
\mathbf{B}(\beta-\frac{d\beta}{\alpha p}-\frac{\beta}{\alpha}, \frac{2d\beta}{\alpha p}-\frac{2\beta(\alpha-1)}{\alpha}+1)\leq C.
\]
Combining the boundedness of the projection operator $P$, \eqref{MLLpq4} in Proposition \ref{pro:MLopLpqesti} and H\"{o}lder's inequality, we obtain
\begin{gather}\label{B2esti}
\begin{split}
B_2&\leq C\int_0^t(t-\tau)^{-\frac{d\beta}{\alpha}(\frac{2}{p}-\frac{1}{p})-\frac{\beta}{\alpha}+\beta-1}
\|P(u\otimes u)(\tau)\|_{\frac{p}{2}}\,d\tau\\
&\leq C\int_0^t(t-\tau)^{-\frac{d\beta}{\alpha p}-\frac{\beta}{\alpha}+\beta-1}\tau^{\frac{2d\beta}{\alpha}(\frac{1}{p}
-\frac{\alpha-1}{d})}\big(\tau^{\frac{d\beta}{\alpha}(\frac{\alpha-1}{d}-\frac{1}{p})}\|u(\tau)\|_{p}\big)^2\,d\tau\\
&\leq Ct^{-\frac{d\beta}{\alpha}(\frac{\alpha-1}{d}-\frac{1}{p})}\mathbf{B}(\beta-\frac{d\beta}{\alpha p}-\frac{\beta}{\alpha}, \frac{2d\beta}{\alpha p}-\frac{2\beta(\alpha-1)}{\alpha}+1)\big(\sup_{\tau>0}\tau^{\frac{d\beta}{\alpha}(\frac{\alpha-1}{d}-\frac{1}{p})}\|u(\tau)\|_{p}\big)^2\\
&\leq Ct^{-\frac{d\beta}{\alpha}(\frac{\alpha-1}{d}-\frac{1}{p})}\big(\sup_{\tau>0}\tau^{\frac{d\beta}{\alpha}(\frac{\alpha-1}{d}-\frac{1}{p})}\|u(\tau)\|_{p}\big)^2.
\end{split}
\end{gather}
As regards $B_3$, the boundedness of the projection operator $P$, \eqref{MLLpq2} and H\"{o}lder's inequality indicate that
\begin{gather}\label{B3esti}
\begin{split}
B_3&\leq C\int_0^t(t-\tau)^{-\frac{d\beta}{\alpha}(\frac{1}{q}+\frac{1}{d}-\frac{1}{p})+\beta-1}
\|P(n\nabla\phi)(\tau)\|_{\frac{qd}{q+d}}\,d\tau\\
&\leq C\int_0^t(t-\tau)^{-\frac{d\beta}{\alpha}(\frac{1}{q}+\frac{1}{d}-\frac{1}{p})+\beta-1}\tau^{\frac{d\beta}{\alpha}(\frac{1}{q}
-\frac{2\alpha-2}{d})}\tau^{\frac{d\beta}{\alpha}(\frac{2\alpha-2}{d}-\frac{1}{q})}\|n(\tau)\|_{q}
\|\nabla\phi\|_{d}\,d\tau\\
&\leq C t^{-\frac{d\beta}{\alpha}(\frac{\alpha-1}{d}-\frac{1}{p})}\mathbf{B}(\beta-\frac{d\beta}{\alpha q}+\frac{d\beta}{\alpha p}-\frac{\beta}{\alpha}, \frac{d\beta}{\alpha q}-\frac{2\beta(\alpha-1)}{\alpha}+1)\sup_{\tau>0}\tau^{\frac{d\beta}{\alpha}(\frac{2\alpha-2}{d}-\frac{1}{q})}\|n(\tau)\|_{q}\|\nabla\phi\|_{d}.\\
\end{split}
\end{gather}
Since conditions (I), (II), (III), (IV) and (V) in Assumption \ref{assum1} imply that
\[-\frac{d\beta}{\alpha q}+\frac{d\beta}{\alpha p}-\frac{\beta}{\alpha}+\beta>0,~~\frac{d\beta}{\alpha q}-\frac{2\beta(\alpha-1)}{\alpha}+1>0,\]
then we have
\begin{equation}\label{0B3esti}
B_3\leq C t^{-\frac{d\beta}{\alpha}(\frac{\alpha-1}{d}-\frac{1}{p})}\sup_{\tau>0}\tau^{\frac{d\beta}{\alpha}(\frac{2\alpha-2}{d}-\frac{1}{q})}
\|n(\tau)\|_{q}\|\nabla\phi\|_{d}.
\end{equation}
Combine \eqref{B1esti}, \eqref{B2esti} and \eqref{0B3esti} to yield
\begin{equation}\label{uestbdd}
\begin{split}
\sup_{t>0}t^{\frac{d\beta}{\alpha}(\frac{\alpha-1}{d}-\frac{1}{p})}\|\tilde{u}\|_{p}\leq& C\| u_0\|_{\frac{d}{\alpha-1}}
+C\|\nabla\phi\|_{d}\sup_{\tau>0}\tau^{\frac{d\beta}{\alpha}(\frac{2\alpha-2}{d}-\frac{1}{q})}\|n(\tau)\|_{q}\\
&+C\sup_{\tau>0}\tau^{\frac{d\beta}{\alpha}(\frac{\alpha-1}{d}-\frac{1}{p})}\| u(\tau)\|_{p}\big(1+\sup_{t>0}\tau^{\frac{d\beta}{\alpha}(\frac{\alpha-1}{d}-\frac{1}{p})}\| u(\tau)\|_{p}\big).
\end{split}
\end{equation}
Hence it follows that
\[
t^{\frac{d\beta}{\alpha}(\frac{\alpha-1}{d}-\frac{1}{p})}\tilde{u}\in C((0,\infty);L^p(\mathbb{R}^d)).
\]

Then, from \eqref{nestbdd},\eqref{vtildeestbdd} and \eqref{uestbdd}, we conclude that $F(n_0,v_0,u_0,\phi,n,v,u)\in \mathcal{Y}$ with
\[
\|F(n_0,v_0,u_0,\phi,n,v,u)\|_{\mathcal{Y}}
\leq C\|[n_0,v_0,u_0,\phi]\|_{\mathcal{X}}+C\|[n,v,u]\|_{\mathcal{Y}}(1+\|[n,v,u]\|_{\mathcal{Y}}+\|\nabla\phi\|_{d}).
\]

\textbf{Step 3.}(The map $F(n_0,v_0,u_0,\phi,\cdot,\cdot,\cdot)$ is of class $C^1$ from $\mathcal{Y}$ into itself) For each $[n,v,u]\in \mathcal{Y}$, we define a linear map $\mathcal{L}_{[n,v,u]}(\bar{n},\bar{v},\bar{u})=[\bar{N},\bar{V},\bar{U}]$ on $\mathcal{Y}$ by
\begin{equation}\label{maplmildsolu}
\left\{
\begin{aligned}
\bar{N}(t)=&\bar{n}(t)+\int_0^t(t-\tau)^{\beta-1}E_{\beta,\beta}(-(t-\tau)^\beta(-\Delta)^{\frac{\alpha}{2}})(u\cdot\nabla \bar{n}+\bar{u}\cdot\nabla n)\,d\tau\\
&+\int_0^t(t-\tau)^{\beta-1}\nabla\cdot E_{\beta,\beta}(-(t-\tau)^\beta(-\Delta)^{\frac{\alpha}{2}})(\bar{n}\nabla v+n\nabla \bar{v})\,d\tau,\\
  \bar{V}(t)=&\bar{v}(t)+\int_0^t(t-\tau)^{\beta-1}E_{\beta,\beta}((t-\tau)^\beta(-(-\Delta)^{\frac{\alpha}{2}}-\gamma))(\bar{u}\cdot\nabla v+u\cdot\nabla \bar{v})\,d\tau\\
  &-\int_0^t (t-\tau)^{\beta-1}E_{\beta,\beta}((t-\tau)^\beta(-(-\Delta)^{\frac{\alpha}{2}}-\gamma))\bar{n}\,d\tau,\\
  \bar{U}(t)=&\bar{u}(t)+\int_0^t(t-\tau)^{\beta-1}E_{\beta,\beta}(-(t-\tau)^\beta(-\Delta)^{\frac{\alpha}{2}})[P((\bar{u}\cdot\nabla)u+(u\cdot\nabla)\bar{u})]\,d\tau\\
  &+\int_0^t (t-\tau)^{\beta-1}E_{\beta,\beta}(-(t-\tau)^\beta(-\Delta)^{\frac{\alpha}{2}})P(\bar{n}\nabla\phi)\,d\tau.
\end{aligned}
\right.
\end{equation}
We intend to show that for each fixed $[n_0,v_0,u_0,\phi]\in \mathcal{X}$, $\mathcal{L}_{[n,v,u]}$ is the Fr\'{e}chet derivative of $F(n_0,v_0,u_0,\phi,n,v,u)$ at $[n,v,u]\in \mathcal{Y}$. We define $[N,V,U]$ by
\[
[N,V,U]:=F(n_0,v_0,u_0,\phi,n+\bar{n},v+\bar{v},u+\bar{u})-F(n_0,v_0,u_0,\phi,n,v,u)-\mathcal{L}_{[n,v,u]}(\bar{n},\bar{v},\bar{u}).
\]
In view of \eqref{mapfmildsolu} and \eqref{maplmildsolu}, it is not difficult to find that
\begin{gather}\label{Nt}
\begin{split}
N(t)=&n(t)+\bar{n}(t)-E_\beta(-t^\beta(-\Delta)^{\frac{\alpha}{2}})n_0\\
&+\int_0^t(t-\tau)^{\beta-1}E_{\beta,\beta}(-(t-\tau)^\beta(-\Delta)^{\frac{\alpha}{2}})\big((u+\bar{u})\cdot\nabla (n+\bar{n})\big)\,d\tau\\
&+\int_0^t(t-\tau)^{\beta-1}\nabla \cdot E_{\beta,\beta}(-(t-\tau)^\beta(-\Delta)^{\frac{\alpha}{2}})\big((n+\bar{n})\nabla (v+\bar{v})\big)\,d\tau\\
&-\big(n(t)-E_\beta(-t^\beta(-\Delta)^{\frac{\alpha}{2}})n_0+\int_0^t(t-\tau)^{\beta-1}E_{\beta,\beta}(-(t-\tau)^\beta(-\Delta)^{\frac{\alpha}{2}})(u\cdot\nabla n)\,d\tau\\
&+\int_0^t(t-\tau)^{\beta-1}\nabla \cdot E_{\beta,\beta}(-(t-\tau)^\beta(-\Delta)^{\frac{\alpha}{2}})(n\nabla v)\,d\tau\big)\\
&-\big(\bar{n}(t)+\int_0^t(t-\tau)^{\beta-1}E_{\beta,\beta}(-(t-\tau)^\beta(-\Delta)^{\frac{\alpha}{2}})(u\cdot\nabla \bar{n}+\bar{u}\cdot\nabla n)\,d\tau\\
&+\int_0^t(t-\tau)^{\beta-1}\nabla\cdot E_{\beta,\beta}(-(t-\tau)^\beta(-\Delta)^{\frac{\alpha}{2}})(\bar{n}\nabla v+n\nabla \bar{v})\,d\tau\big)\\
=&\int_0^t(t-\tau)^{\beta-1} E_{\beta,\beta}(-(t-\tau)^\beta(-\Delta)^{\frac{\alpha}{2}})(\bar{u}\cdot\nabla \bar{n}+\bar{n}\nabla \bar{v})\,d\tau.
\end{split}
\end{gather}
Due to the similar estimates in \eqref{A2esti} and \eqref{A3esti}, one obtains
\begin{gather}\label{lNmildsolu}
\begin{split}
\|N(t)\|_{q}=&\|\int_0^t(t-\tau)^{\beta-1} E_{\beta,\beta}(-(t-\tau)^\beta(-\Delta)^{\frac{\alpha}{2}})(\bar{u}\cdot\nabla \bar{n})\,d\tau\|_q\\
&+\|\int_0^t(t-\tau)^{\beta-1}\nabla\cdot E_{\beta,\beta}(-(t-\tau)^\beta(-\Delta)^{\frac{\alpha}{2}})(\bar{n}\nabla \bar{v})\,d\tau\|_q\\
\leq& C \sup_{\tau>0}\tau^{\frac{d\beta}{\alpha}(\frac{\alpha-1}{d}-\frac{1}{p})}\|\bar{u}(\tau)\|_{p}
\sup_{\tau>0}\tau^{\frac{d\beta}{\alpha}(\frac{2\alpha-2}{d}-\frac{1}{q})}\|\bar{n}(\tau)\|_{q} \\
&~~~~\times t^{-\frac{d\beta}{\alpha}(\frac{2\alpha-2}{d}-\frac{1}{q})}\mathbf{B}(-\frac{d\beta}{\alpha p}-\frac{\beta}{\alpha}+\beta, \frac{d\beta}{\alpha p}+\frac{d\beta}{\alpha q}-\frac{3\beta(\alpha-1)}{\alpha}+1)\\
&+C \sup_{\tau>0}\tau^{\frac{d\beta}{\alpha}(\frac{\alpha-1}{d}-\frac{1}{r})}\|\nabla \bar{v}(\tau)\|_{r}\sup_{\tau>0}\tau^{\frac{d\beta}{\alpha}(\frac{2\alpha-2}{d}-\frac{1}{q})}\|\bar{n}(\tau)\|_{q}\\
&~~~~\times t^{-\frac{d\beta}{\alpha}(\frac{2\alpha-2}{d}-\frac{1}{q})}\mathbf{B}(-\frac{d\beta}{\alpha r}-\frac{\beta}{\alpha}+\beta, \frac{d\beta}{\alpha q}+\frac{d\beta}{\alpha r}-\frac{3\beta(\alpha-1)}{\alpha}+1).
\end{split}
\end{gather}

By \eqref{mapfmildsolu} and \eqref{maplmildsolu}, it holds that
\begin{gather}\label{Vt}
\nabla V(t)=\int_0^t(t-\tau)^{\beta-1}\nabla E_{\beta,\beta}((t-\tau)^\beta(-(-\Delta)^\frac{\alpha}{2}-\gamma))(\bar{u}\cdot\nabla \bar{v})(\tau)\,d\tau
\end{gather}
with
\begin{gather}\label{lVmildsolu}
\begin{split}
\|\nabla V(t)\|_{r}\leq& C \sup_{\tau>0}\tau^{\frac{d\beta}{\alpha}(\frac{\alpha-1}{d}-\frac{1}{p})}\|\bar{u}(\tau)\|_{p}
\sup_{\tau>0}\tau^{\frac{d\beta}{\alpha}(\frac{\alpha-1}{d}-\frac{1}{r})}\|\nabla \bar{v}(\tau)\|_{r} \\
&~~~\times t^{-\frac{d\beta}{\alpha}(\frac{\alpha-1}{d}-\frac{1}{r})}\mathbf{B}(-\frac{d\beta}{\alpha p}-\frac{\beta}{\alpha}+\beta, \frac{d\beta}{\alpha p}+\frac{d\beta}{\alpha r}-\frac{2\beta(\alpha-1)}{\alpha}+1).
\end{split}
\end{gather}

In the same way as above, we see from \eqref{mapfmildsolu} and \eqref{maplmildsolu} that
\begin{gather}\label{Ut}
U(t)=\int_0^t(t-\tau)^{\beta-1}E_{\beta,\beta}(-(t-\tau)^\beta(-\Delta)^\frac{\alpha}{2})P((\bar{u}\cdot\nabla) \bar{u})(\tau)\,d\tau,
\end{gather}
and it holds
\begin{gather}\label{lUmildsolu}
\begin{split}
\| U(t)\|_{p}\leq& C t^{-\frac{d\beta}{\alpha}(\frac{\alpha-1}{d}-\frac{1}{p})}\mathbf{B}(-\frac{d\beta}{\alpha p}-\frac{\beta}{\alpha}+\beta, \frac{2d\beta}{\alpha p}-\frac{2\beta(\alpha-1)}{\alpha}+1)\\
&~~~\times\big(\sup_{\tau>0}\tau^{\frac{d\beta}{\alpha}(\frac{\alpha-1}{d}-\frac{1}{p})}\|\bar{u}(\tau)\|_{p}\big)^2.
\end{split}
\end{gather}
Therefore, \eqref{lNmildsolu}, \eqref{lVmildsolu}, \eqref{lUmildsolu} yield that for each $[n_0,v_0,u_0,\phi]\in \mathcal{X}$ and each $[n,v,u]\in \mathcal{Y}$
\begin{gather*}
\begin{split}
&\lim_{\|[\bar{n},\bar{v},\bar{u}]\|_\mathcal{Y}\rightarrow 0}\frac{\|[N,V,U]\|_\mathcal{Y}}{\|[\bar{n},\bar{v},\bar{u}]\|_\mathcal{Y}}\\
=&\lim_{\|[\bar{n},\bar{v},\bar{u}]\|_\mathcal{Y}\rightarrow 0}\frac{\big(\|F(n_0,v_0,u_0,\phi,n+\bar{n},v+\bar{v},u+\bar{u})-F(n_0,v_0,u_0,\phi,n,v,u)-\mathcal{L}_{[n,v,u]}(\bar{n},\bar{v},\bar{u})\|_\mathcal{Y}\big)}
{\|[\bar{n},\bar{v},\bar{u}]\|_\mathcal{Y}}\\
=&0.
\end{split}
\end{gather*}
This implies that the Fr\'{e}chet derivative of $F$ at point $[n_0,v_0,u_0,\phi,n+\bar{n},v+\bar{v},u+\bar{u}]\in \mathcal{X}\times \mathcal{Y}$ in the direction to $[n,v,u]$ is equal to $\mathcal{L}_{[n,v,u]}(\bar{n},\bar{v},\bar{u})$.

\textbf{Step 4.}(Fr\'{e}chet derivative $\mathcal{L}_{[n,v,u]}(\bar{n},\bar{v},\bar{u})$ at $[n,v,u]=[0,0,0]$ is a bijective mapping)
From Step 3, for $[\bar{n},\bar{v},\bar{u}]\in \mathcal{Y}$, we have an expression $\mathcal{L}_{[0,0,0]}(\bar{n},\bar{v},\bar{u})=[\bar{N}_0,\bar{V}_0,\bar{U}_0]$ as
\[
\bar{N}_0(t)=\bar{n}(t),~\bar{V}_0(t)=\bar{v}(t)-\int_0^t (t-\tau)^{\beta-1}E_{\beta,\beta}((t-\tau)^\beta(-(-\Delta)^{\frac{\alpha}{2}}-\gamma))\bar{n}(\tau)\,d\tau,~\bar{U}_0(t)=\bar{u}(t).
\]
Hence it is easy to see that $\bar{N}_0=\bar{V}_0=\bar{U}_0=0$ implies that $\bar{n}=\bar{v}=\bar{u}=0$, which suggests that $\mathcal{L}_{[0,0,0]}$ is injective.

On the other hand, for every $[\bar{N}_0,\bar{V}_0,\bar{U}_0]\in \mathcal{Y}$, we may take $[\bar{n},\bar{v},\bar{u}]\in \mathcal{Y}$ as
\[
\bar{n}(t)=\bar{N}_0(t),~ \bar{v}(t)=\bar{V}_0(t)+\int_0^t (t-\tau)^{\beta-1}E_{\beta,\beta}((t-\tau)^\beta(-(-\Delta)^{\frac{\alpha}{2}}-\gamma))\bar{N}_0(\tau)\,d\tau,~\bar{u}(t)=\bar{U}_0(t),
\]
so that
\[
\mathcal{L}_{[0,0,0]}(\bar{n},\bar{v},\bar{u})=[\bar{N}_0,\bar{V}_0,\bar{U}_0].
\]
Therefore, we prove that $\mathcal{L}_{[0,0,0]}$ is surjective from $\mathcal{Y}$ onto itself.

\textbf{Step 5.}(Existence and uniqueness)
Now, it follows from Banach implicit function theorem that there is a $C^1$-map $g$ defined as
\[
g: \mathcal{X}_M\rightarrow \mathcal{Y}_M,
\]
and there exists some $M>0$ such that
\[
\begin{split}
g(0,0,0,0)&=[0,0,0],\\
F(n_0,v_0,u_0,\phi,g(n_0,v_0,u_0,\phi))&=[0,0,0],~~\forall [n_0,v_0,u_0,\phi]\in \mathcal{X}_M,
\end{split}
\]
where
\[
\begin{split}
\mathcal{X}_M:&=\{[n_0,v_0,u_0,\phi]\in \mathcal{X}:\|[n_0,v_0,u_0,\phi]\|_\mathcal{X}<M\},\\
\mathcal{Y}_M:&=\{[n,v,u]\in \mathcal{Y}:\|[n,v,u]\|_\mathcal{Y}<M\}.
\end{split}
\]

Thus, the existence and uniqueness of mild solutions to system \eqref{chemo-fluid-eq} are the consequences of the existence of the continuously differential map $g$ from $\mathcal{X}_M$ to $\mathcal{Y}_M$.
\end{proof}

\section{Existence of mild solutions in fractional homogeneous Sobolev spaces}\label{sec4}

In this section, we investigate the existence of the solution to \eqref{chemo-fluid-eq} in fractional homogeneous  Sobolev spaces. The methods of proving the local and global existence result are still Banach fixed point theorem and Banach implicit function theorem. The proofs in this section are similar to Section \ref{3.1} and Section \ref{3.2}.

\subsection{Proof of Theorem \ref{thm:localhighregu}}\label{4.1}

\begin{proof}
In this proof, we proceed as the same argument with the proof of Theorem \ref{thm:localmildexis} using Banach fixed point theorem. According to the assumptions in Theorem \ref{thm:localhighregu}, the initial data $n_0\in H^{\mu,q}(\mathbb{R}^d), \nabla v_0\in H^{\mu,r}(\mathbb{R}^d)$ and $u_0\in H^{\mu,p}(\mathbb{R}^d)$, there exists a constant $\mathcal{M}>0$ such that the initial data satisfy
\[
\|n_0\|_{H^{\mu,q}}+\|\nabla v_0\|_{H^{\mu,r}}+\|u_0\|_{H^{\mu,p}}\leq \mathcal{M}.
\]
Define the Banach space
\[
X_{\mu,T}:=\{(n,v,u):n\in C((0,T];H^{\mu,q}(\mathbb{R}^d)),\nabla v \in C((0,T];H^{\mu,r}(\mathbb{R}^d)),u\in C((0,T];H^{\mu,p}(\mathbb{R}^d))\}
\]
with the norm given by
\begin{gather*}
\|(n,v,u)\|_{X_{\mu,T}}:=\sup_{0< t\leq T}\|n(t)\|_{H^{\mu,q}}+\sup_{0< t\leq T}\|\nabla v(t)\|_{H^{\mu,r}}+\sup_{0< t\leq T}\|u(t)\|_{H^{\mu,p}},
\end{gather*}
and consider the closed subset $\tilde{S}$, which is defined by
\[
\tilde{S}:=\{(n,v,u)\in X_{\mu,T}:\|(n,v,u)\|_{X_{\mu,T}}\leq 2\mathcal{M}\}.
\]
The mapping $\mathcal{H}$ is the one given in \eqref{mildsoluH}.

In light of Lemma \ref{highregu}, it is obvious that
\begin{equation}\label{hinvnu}
\begin{split}
\|(-\Delta)^{\frac{\mu}{2}}(un)\|_{\frac{pq}{p+q}}&\leq C\big(\|(-\Delta)^{\frac{\mu}{2}}u\|_{p}\|n\|_q+\|(-\Delta)^{\frac{\mu}{2}}n\|_{q}\|u\|_p\big),\\
\|(-\Delta)^{\frac{\mu}{2}}(n\nabla v)\|_{\frac{rq}{r+q}}&\leq C\big(\|(-\Delta)^{\frac{\mu}{2}}\nabla v\|_{r}\|n\|_q+\|(-\Delta)^{\frac{\mu}{2}}n\|_{q}\|\nabla v\|_r\big).
\end{split}
\end{equation}
 In views of the definition of $\mathcal{H}_1(n,v,u)$ listed in \eqref{mildsoluH}, Proposition \ref{pro:MLopLpqesti}, the above inequality \eqref{hinvnu} and Lemma \ref{equiregu}, we evaluate that for any $0< t\leq T$:
\begin{equation}\label{hirelocalh1}
\begin{split}
\|\mathcal{H}_1(n,v,u)\|_{\dot{H}^{\mu,q}}\leq& \|(-\Delta)^{\frac{\mu}{2}}E_\beta(-t^\beta(-\Delta)^{\frac{\alpha}{2}})n_0\|_{q}\\
&~+\int_0^t(t-\tau)^{\beta-1}\|\nabla \cdot E_{\beta,\beta}(-(t-\tau)^\beta(-\Delta)^{\frac{\alpha}{2}})(-\Delta)^{\frac{\mu}{2}}(u n)\|_{q}\,d\tau\\
&~+\int_0^t(t-\tau)^{\beta-1}\|\nabla\cdot E_{\beta,\beta}(-(t-\tau)^\beta(-\Delta)^{\frac{\alpha}{2}})(-\Delta)^{\frac{\mu}{2}}(n\nabla v)\|_{q}\,d\tau\\
\leq&\|n_0\|_{\dot{H}^{\mu,q}} +CT^{-\frac{d\beta}{\alpha}(\frac{1}{p}-\frac{\alpha-1}{d})}\|u\|_{C((0,T];H^{\mu,p}(\mathbb{R}^d))}\|n\|_{C((0,T];H^{\mu,q}(\mathbb{R}^d))}\\
&+CT^{-\frac{d\beta}{\alpha}(\frac{1}{r}-\frac{\alpha-1}{d})}\|\nabla v\|_{C((0,T];H^{\mu,r}(\mathbb{R}^d))}\|n\|_{C((0,T];H^{\mu,q}(\mathbb{R}^d))},
\end{split}
\end{equation}
here the last inequality holds since the conditions $(1)$-$(4)$ in Theorem \ref{thm:localhighregu} ensure that
\[
-\frac{d\beta}{\alpha}(\frac{1}{p}-\frac{\alpha-1}{d})>0, ~~-\frac{d\beta}{\alpha}(\frac{1}{r}-\frac{\alpha-1}{d})>0.
\]
The inequality \eqref{hirelocalh1} together with \eqref{localmildH1} in Section \ref{3.1} yields that
\begin{equation}\label{hirelocalmildH1}
\begin{split}
\|\mathcal{H}_1(n,v,u)\|_{H^{\mu,q}}\leq&\|n_0\|_{H^{\mu,q}} +CT^{-\frac{d\beta}{\alpha}(\frac{1}{p}-\frac{\alpha-1}{d})}\|u\|_{C((0,T];H^{\mu,p}(\mathbb{R}^d))}\|n\|_{C((0,T];H^{\mu,q}(\mathbb{R}^d))}\\
&+CT^{-\frac{d\beta}{\alpha}(\frac{1}{r}-\frac{\alpha-1}{d})}\|\nabla v\|_{C((0,T];H^{\mu,r}(\mathbb{R}^d))}\|n\|_{C((0,T];H^{\mu,q}(\mathbb{R}^d))}.
\end{split}
\end{equation}

Similarly, by virtue of \eqref{mildsoluH}, Proposition \ref{pro:MLopLpqesti}, Lemma \ref{highregu} and Lemma \ref{equiregu} together with \eqref{localmildH2} in Section \ref{3.1}, we obtain that
\begin{equation}\label{hirelocalmildH2}
\begin{split}
\|\mathcal{H}_2(n,v,u)\|_{H^{\mu,r}}\leq&\|\nabla v_0\|_{H^{\mu,r}}+CT^{-\frac{d\beta}{\alpha}(\frac{1}{q}-\frac{1}{r}-\frac{\alpha-1}{d})}\|n\|_{C((0,T];H^{\mu,q}(\mathbb{R}^d))} \\
&+CT^{-\frac{d\beta}{\alpha}(\frac{1}{p}-\frac{\alpha-1}{d})}\|u\|_{C((0,T];H^{\mu,p}(\mathbb{R}^d))}\|\nabla v\|_{C((0,T];H^{\mu,r}(\mathbb{R}^d))}.
\end{split}
\end{equation}
In terms of the boundedness of the operator $P$, \eqref{MLLpq4} in Proposition \ref{pro:MLopLpqesti} and H\"{o}lder's inequality, we have
\[
\begin{split}
&\int_0^t(t-\tau)^{\beta-1}\|(-\Delta)^{\frac{\mu}{2}}E_{\beta,\beta}(-(t-\tau)^\beta(-\Delta)^{\frac{\alpha}{2}})[P(n\nabla \phi)]\|_{p}\,d\tau\\
\leq&C\int_0^t(t-\tau)^{-\frac{d\beta}{\alpha}(\frac{1}{q}+\frac{1+\mu}{d}-\frac{1}{p})+\beta-1}\|\nabla \phi\|_{d}\|n\|_{q}\,d\tau\\
\leq&CT^{-\frac{d\beta}{\alpha}(\frac{1}{q}+\frac{1+\mu}{d}-\frac{1}{p})+\beta}\|n\|_{C((0,T];L^q(\mathbb{R}^d))},
\end{split}
\]
provided $0\leq\frac{1}{q}-\frac{1}{p}+\frac{1+\mu}{d}<\frac{\alpha-1}{d}$. Therefore, it holds that
\begin{equation}\label{hirelocalmildH3}
\begin{split}
\|\mathcal{H}_3(n,v,u)\|_{\dot{H}^{\mu,p}}\leq& \|(-\Delta)^{\frac{\mu}{2}}E_\beta(-t^\beta(-\Delta)^{\frac{\alpha}{2}})u_0\|_{p}\\
&+\int_0^t(t-\tau)^{\beta-1}\|(-\Delta)^{\frac{\mu}{2}}\nabla\cdot E_{\beta,\beta}(-(t-\tau)^\beta(-\Delta)^{\frac{\alpha}{2}})[P(u\otimes u)]\|_{p}\,d\tau\\
&+\int_0^t(t-\tau)^{\beta-1}\|(-\Delta)^{\frac{\mu}{2}}E_{\beta,\beta}(-(t-\tau)^\beta(-\Delta)^{\frac{\alpha}{2}})[P(n\nabla \phi)]\|_{p}\,d\tau\\
\leq&\|u_0\|_{\dot{H}^{\mu,p}}+CT^{-\frac{d\beta}{\alpha}(\frac{1}{p}-\frac{\alpha-1}{d})}\|u\|^2_{C((0,T];H^{\mu,p}(\mathbb{R}^d))}\\
&+CT^{-\frac{d\beta}{\alpha}(\frac{1}{q}-\frac{1}{p})-\frac{(1+\mu)\beta}{\alpha}+\beta}\|n\|_{C((0,T];L^q(\mathbb{R}^d))}.
\end{split}
\end{equation}
Hence combining the above inequality \eqref{hirelocalmildH3} and \eqref{localmildH3} in Section \ref{3.1}, we find
\begin{equation}\label{highrelocalmildH3}
\begin{split}
\|\mathcal{H}_3(n,v,u)\|_{H^{\mu,p}}
\leq&\|u_0\|_{H^{\mu,p}}+CT^{-\frac{d\beta}{\alpha}(\frac{1}{p}-\frac{\alpha-1}{d})}\|u\|^2_{C((0,T];H^{\mu,p}(\mathbb{R}^d))}\\
&+C\big(T^{-\frac{d\beta}{\alpha}(\frac{1}{q}-\frac{1}{p})-\frac{\beta}{\alpha}
+\beta}+T^{-\frac{d\beta}{\alpha}(\frac{1}{q}-\frac{1}{p})-\frac{(1+\mu)\beta}{\alpha}+\beta}\big)\|n\|_{C((0,T];H^{\mu,q}(\mathbb{R}^d))}.
\end{split}
\end{equation}
In light of \eqref{hirelocalmildH1}, \eqref{hirelocalmildH2} and \eqref{highrelocalmildH3}, if we choose $\tilde{T}_0$ satisfy
\[
\begin{split}
\tilde{T}_0:=\min\{&(8\tilde{C}_1)^{\frac{\alpha q r}{\beta((r-q)d-(\alpha-1)qr)}},
(16\tilde{C}_1)^{\frac{\alpha q p}{\beta((p-q)d-(\alpha-1)qp)}},
(16\tilde{C}_1)^{\frac{\alpha pq}{\beta((p-q)d-(\alpha-1-\mu)pq)}},\\
&~~~~(16\tilde{\mathcal{C}}_2\mathcal{M})^{\frac{\alpha r}{\beta(d-(\alpha-1)r)}},
(16\tilde{\mathcal{C}}_2\mathcal{M})^{\frac{\alpha p}{\beta(d-(\alpha-1)p)}}\},
\end{split}
\]
it is not difficult to find that $\mathcal{H}(n,v,u): \tilde{S}\rightarrow \tilde{S}$, that is
\[
\|\mathcal{H}_1(n,v,u)\|_{H^{\mu,q}}+\|\nabla\mathcal{H}_2(n,v,u)\|_{H^{\mu,r}}+\|\mathcal{H}_3(n,v,u)\|_{H^{\mu,p}}\leq 2\mathcal{M},~~~~0<t\leq \tilde{T}_0.
\]

In the sequel, we shall calculate the distance estimates. For $(n_1,v_1,u_1), (n_2,v_2,u_2)\in X_{\mu,T}$, using similar estimates as  \eqref{hirelocalmildH1}, \eqref{hirelocalmildH2} and \eqref{highrelocalmildH3}, it is clear that
\begin{equation*}
\begin{split}
&\|\mathcal{H}_1(n_1,v_1,u_1)-\mathcal{H}_1(n_2,v_2,u_2)\|_{H^{\mu,q}}\\
\leq &CT^{-\frac{d\beta}{\alpha}(\frac{1}{p}-\frac{\alpha-1}{d})}\|u_1-u_2\|_{C((0,T];H^{\mu,p}\mathbb{R}^d))}\|n_1\|_{C((0,T];H^{\mu,q}(\mathbb{R}^d))}\\
&+CT^{-\frac{d\beta}{\alpha}(\frac{1}{p}-\frac{\alpha-1}{d})}\|n_1-n_2\|_{C((0,T];H^{\mu,q}\mathbb{R}^d))}\|u_2\|_{C((0,T];H^{\mu,p}(\mathbb{R}^d))}\\
&+CT^{-\frac{d\beta}{\alpha}(\frac{1}{r}-\frac{\alpha-1}{d})}\|n_1-n_2\|_{C((0,T];H^{\mu,q}\mathbb{R}^d))}\|\nabla v_1\|_{C((0,T];H^{\mu,r}(\mathbb{R}^d))}\\
&+CT^{-\frac{d\beta}{\alpha}(\frac{1}{r}-\frac{\alpha-1}{d})}\|\nabla v_1-\nabla v_2\|_{C((0,T];H^{\mu,r}\mathbb{R}^d))}\|n_2\|_{C((0,T];H^{\mu,q}(\mathbb{R}^d))}\\
\leq&2\mathcal{C}_4\mathcal{M}\big(T^{-\frac{d\beta}{\alpha}(\frac{1}{p}-\frac{\alpha-1}{d})}+T^{-\frac{d\beta}{\alpha}(\frac{1}{r}-\frac{\alpha-1}{d})}\big) D_T(n_1-n_2,v_1-v_2,u_1-u_2),
\end{split}
\end{equation*}
\begin{equation*}
\begin{split}
&\|\nabla\mathcal{H}_2(n_1,v_1,u_1)-\nabla\mathcal{H}_2(n_2,v_2,u_2)\|_{H^{\mu,r}}\\
\leq& CT^{-\frac{d\beta}{\alpha}(\frac{1}{p}-\frac{\alpha-1}{d})}\|u_1-u_2\|_{C((0,T];H^{\mu,p}\mathbb{R}^d))}(\|\nabla v_1\|_{C((0,T];H^{\mu,r}(\mathbb{R}^d))}\\
&+CT^{-\frac{d\beta}{\alpha}(\frac{1}{p}-\frac{\alpha-1}{d})}\|\nabla v_1-\nabla v_2\|_{C((0,T];H^{\mu,r}\mathbb{R}^d))}(\| u_2\|_{C((0,T];H^{\mu,p}(\mathbb{R}^d))}\\
&+CT^{-\frac{d\beta}{\alpha}(\frac{1}{q}-\frac{1}{r})-\frac{\beta}{\alpha}+\beta}\|n_1-n_2\|_{C((0,T];H^{\mu,q}(\mathbb{R}^d))}\\
\leq&\mathcal{C}_5\big(2\mathcal{M}T^{-\frac{d\beta}{\alpha}(\frac{1}{p}-\frac{\alpha-1}{d})}+T^{-\frac{d\beta}{\alpha}(\frac{1}{q}-\frac{1}{r}-\frac{\alpha-1}{d})}\big)  D_T(n_1-n_2,v_1-v_2,u_1-u_2),
\end{split}
\end{equation*}
and
\begin{equation*}
\begin{split}
&\|\mathcal{H}_3(n_1,v_1,u_1)-\mathcal{H}_3(n_2,v_2,u_2)\|_{H^{\mu,p}}\\
\leq & CT^{-\frac{d\beta}{\alpha}(\frac{1}{p}-\frac{\alpha-1}{d})}\|u_1-u_2\|_{C((0,T];H^{\mu,p}(\mathbb{R}^d))}(\|u_1\|_{C((0,T];H^{\mu,p}(\mathbb{R}^d))}+\|u_2\|_{C((0,T];H^{\mu,p}(\mathbb{R}^d))})\\
&+C\big(T^{-\frac{d\beta}{\alpha}(\frac{1}{q}-\frac{1}{p})-\frac{\beta}{\alpha}+\beta}
+T^{-\frac{d\beta}{\alpha}(\frac{1}{q}-\frac{1}{p})-\frac{(1+\mu)\beta}{\alpha}+\beta}\big)\|n_1-n_2\|_{C((0,T];H^{\mu,q}(\mathbb{R}^d))}\\
\leq&\mathcal{C}_6\big(2\mathcal{M}T^{-\frac{d\beta}{\alpha}(\frac{1}{p}-\frac{\alpha-1}{d})}+T^{-\frac{d\beta}{\alpha}(\frac{1}{q}-\frac{1}{p}-\frac{\alpha-1}{d})}
+T^{-\frac{d\beta}{\alpha}(\frac{1}{q}-\frac{1}{p})-\frac{(1+\mu)\beta}{\alpha}+\beta}\big) D_T(n_1-n_2,v_1-v_2,u_1-u_2).
\end{split}
\end{equation*}
Define
\[
\begin{split}
D_{\mu,T}(n_1-n_2,v_1-v_2,u_1-u_2):=&\sup_{0\leq t\leq T}\|(n_1-n_2)(t)\|_{H^{\mu,q}}\\
&+\sup_{0< t\leq T}\|\nabla (v_1-v_2)(t)\|_{H^{\mu,r}}+\sup_{0< t\leq T}\|(u_1-u_2)(t)\|_{H^{\mu,p}},
\end{split}
\]
therefore,
\[
\begin{split}
&D_{\mu,T}(\mathcal{H}(n_1,v_1,u_1)-\mathcal{H}(n_2,v_2,u_2))\leq \big(\tilde{\mathcal{C}}_3\mathcal{M}T^{-\frac{d\beta}{\alpha}(\frac{1}{p}-\frac{\alpha-1}{d})}
+2\mathcal{C}_4\mathcal{M}T^{-\frac{d\beta}{\alpha}(\frac{1}{r}-\frac{\alpha-1}{d})}\\
&~+\mathcal{C}_5T^{-\frac{d\beta}{\alpha}(\frac{1}{q}-\frac{1}{r}-\frac{\alpha-1}{d})}
+\mathcal{C}_6(T^{-\frac{d\beta}{\alpha}(\frac{1}{q}-\frac{1}{p}-\frac{\alpha-1}{d})}+
T^{-\frac{d\beta}{\alpha}(\frac{1}{q}-\frac{1}{p}-\frac{\alpha-1-\mu}{d})})\big) D_{\mu,T}(n_1-n_2,v_1-v_2,u_1-u_2).
\end{split}
\]
If we choose $\tilde{T}_1$ satisfy
\[
\begin{split}
\tilde{T}_1:=\min\{&\tilde{T}, \big(\frac{4\tilde{\mathcal{C}}_3\mathcal{M}}{\varrho}\big)^{\frac{\alpha p}{\beta(d-(\alpha-1)p)}},
\big(\frac{8\mathcal{C}_4\mathcal{M}}{\varrho}\big)^{\frac{\alpha r}{\beta(d-(\alpha-1)r)}},
\big(\frac{4\mathcal{C}_5}{\varrho}\big)^{\frac{\alpha q r}{\beta((r-q)d-(\alpha-1)qr)}},\\
&\big(\frac{8\mathcal{C}_6}{\varrho}\big)^{\frac{\alpha q p}{\beta((p-q)d-(\alpha-1)qp)}},
\big(\frac{8\mathcal{C}_6}{\varrho}\big)^{\frac{\alpha pq}{\beta((p-q)d-(\alpha-1-\mu)pq)}}\},
\end{split}
\]
then for $0<\varrho<1$, we have
\[
D_{\mu,T}(\mathcal{H}(n_1,v_1,u_1)-\mathcal{H}(n_2,v_2,u_2))\leq \varrho D_{\mu,T}(n_1-n_2,v_1-v_2,u_1-u_2).
\]
The claim that $\mathcal{H}: \tilde{S}\rightarrow \tilde{S}$ is a strict contraction map can be verified as the proof of Theorem \ref{thm:localmildexis}. The existence and uniqueness of mild solution to \eqref{chemo-fluid-eq} in $\tilde{S}$ then follow from Banach fixed point theorem.

Following the approach of the proof of Theorem \ref{thm:localmildexis}, the statement as regards $T_{max}$ can be obtained similarly, and we omit the details.
\end{proof}

\subsection{Proof of Theorem \ref{thm:globalhire}}\label{4.2}

\begin{proof}
The proof of Theorem \ref{thm:globalhire} is similar to that of Theorem \ref{thm:mildexis}. We consider Banach spaces $\mathcal{Y}_\mu$,
\[
\begin{split}
\mathcal{Y}_\mu:=\{[&n,v,u]|t^{\frac{d\beta}{\alpha}(\frac{2\alpha-2}{d}-\frac{1}{q})}n\in C((0,\infty);L^q(\mathbb{R}^d)),~t^{\frac{d\beta}{\alpha}(\frac{\alpha-1}{d}-\frac{1}{r})}\nabla v\in C((0,\infty);L^r(\mathbb{R}^d)),\\
&t^{\frac{d\beta}{\alpha}(\frac{\alpha-1}{d}-\frac{1}{p})}u\in C((0,\infty);L^p(\mathbb{R}^d)),~t^{\frac{d\beta}{\alpha}(\frac{2\alpha-2+\mu}{d}-\frac{1}{q})}n\in C((0,\infty);\dot{H}^{\mu,q}(\mathbb{R}^d)),\\
&t^{\frac{d\beta}{\alpha}(\frac{\alpha-1+\mu}{d}-\frac{1}{r})}\nabla v\in C((0,\infty);\dot{H}^{\mu,r}(\mathbb{R}^d)),
~t^{\frac{d\beta}{\alpha}(\frac{\alpha-1+\mu}{d}-\frac{1}{p})}u\in C((0,\infty);\dot{H}^{\mu,p}(\mathbb{R}^d))\}
\end{split}
\]
endowed with the norm
\begin{gather*}
\begin{split}
\|[n,v,u]\|_{\mathcal{Y}_\mu}:=&\sup_{0<t<\infty}t^{\frac{d\beta}{\alpha}(\frac{2\alpha-2}{d}-\frac{1}{q})}\|n(t)\|_{q}
+\sup_{0<t<\infty}t^{\frac{d\beta}{\alpha}(\frac{2\alpha-2}{d}-\frac{1}{q}+\frac{\mu}{d})}\|n(t)\|_{\dot{H}^{\mu,q}}\\
&+\sup_{0<t<\infty}t^{\frac{d\beta}{\alpha}(\frac{\alpha-1}{d}-\frac{1}{r})}\|\nabla v(t)\|_{r}
+\sup_{0<t<\infty}t^{\frac{d\beta}{\alpha}(\frac{\alpha-1}{d}-\frac{1}{r}+\frac{\mu}{d})}\|\nabla v(t)\|_{\dot{H}^{\mu,r}}\\
&+\sup_{0<t<\infty}t^{\frac{d\beta}{\alpha}(\frac{\alpha-1}{d}-\frac{1}{p})}\|u(t)\|_{p}
+\sup_{0<t<\infty}t^{\frac{d\beta}{\alpha}(\frac{\alpha-1}{d}-\frac{1}{p}+\frac{\mu}{d})}\|u(t)\|_{\dot{H}^{\mu,p}}.
\end{split}
\end{gather*}
The functional space $\mathcal{X}$ and the map $F$ are those introduced in Section \ref{3.2}.

Next we are going to show that $F:\mathcal{X}\times \mathcal{Y}_\mu\rightarrow \mathcal{Y}_\mu$ is a continuous map. By the definition of $\tilde{n}$ in \eqref{mapfmildsolu}, we have
\begin{equation}\label{hinLqesti}
\begin{split}
\|\tilde{n}(t)\|_{\dot{H}^{\mu,q}}&\leq\|n(t)\|_{\dot{H}^{\mu,q}}+\|E_\beta(-t^\beta(-\Delta)^{\frac{\alpha}{2}})n_0\|_{\dot{H}^{\mu,q}}\\
&~~~+\int_0^t(t-\tau)^{\beta-1}
\|\nabla\cdot E_{\beta,\beta}\big(-(t-\tau)^\beta (-\Delta)^{\frac{\alpha}{2}}\big)(-\Delta)^{\frac{\mu}{2}}(u n)\|_{q}\,d\tau\\
&~~~+\int_0^t(t-\tau)^{\beta-1}
\|\nabla \cdot E_{\beta,\beta}\big(-(t-\tau)^\beta (-\Delta)^{\frac{\alpha}{2}}\big)(-\Delta)^{\frac{\mu}{2}}(n\nabla v)\|_{q}\,d\tau.
\end{split}
\end{equation}
By virtue of \eqref{MLLpq1} in Proposition \ref{pro:MLopLpqesti}, it is obvious that if $q>\frac{d}{2\alpha-2+\mu}$, it holds
\[
\|E_\beta(-t^\beta(-\Delta)^{\frac{\alpha}{2}})n_0\|_{\dot{H}^{\mu,q}}\leq Ct^{-\frac{d\beta}{\alpha}(\frac{2\alpha-2+\mu}{d}-\frac{1}{q})}\|n_0\|_{\frac{d}{2\alpha-2}}.
\]
Proposition \ref{pro:MLopLpqesti} and Lemma \ref{highregu} yield that
\[
\begin{split}
&~~~\int_0^t(t-\tau)^{\beta-1}
\|\nabla\cdot E_{\beta,\beta}\big(-(t-\tau)^\beta (-\Delta)^{\frac{\alpha}{2}}\big)(-\Delta)^{\frac{\mu}{2}}(u n)\|_{q}\,d\tau\\
&\leq C\int_0^t(t-\tau)^{-\frac{d\beta}{\alpha p}-\frac{\beta}{\alpha}+\beta-1}\big(\|(-\Delta)^{\frac{\mu}{2}}u\|_{p}\|n\|_{q}+\|(-\Delta)^{\frac{\mu}{2}} n\|_{q}\|u\|_{p}\big)\,d\tau \\
&\leq C t^{-\frac{d\beta}{\alpha}(\frac{2\alpha-2+\mu}{d}-\frac{1}{q})}\mathbf{B}(-\frac{d\beta}{\alpha p}-\frac{\beta}{\alpha}+\beta, \frac{d\beta}{\alpha }(\frac{1}{q}+\frac{1}{ p})-\frac{(3\alpha-3+\mu)\beta}{\alpha}+1)\\
&~~~\times\big(\sup_{\tau>0}\tau^{\frac{d\beta}{\alpha}(\frac{\alpha-1+\mu}{d}-\frac{1}{p})}\|u\|_{\dot{H}^{\mu,p}}
\sup_{\tau>0}\tau^{\frac{d\beta}{\alpha}(\frac{2\alpha-2}{d}-\frac{1}{q})}\|n\|_{q}\\
&~~~~~~~~+\sup_{\tau>0}\tau^{\frac{d\beta}{\alpha}(\frac{2\alpha-2+\mu}{d}-\frac{1}{q})}\|n\|_{\dot{H}^{\mu,q}}
\sup_{\tau>0}\tau^{\frac{d\beta}{\alpha}(\frac{\alpha-1}{d}-\frac{1}{p})}\|u\|_{p}\big)\\
&\leq Ct^{-\frac{d\beta}{\alpha}(\frac{2\alpha-2+\mu}{d}-\frac{1}{q})}\big(\sup_{\tau>0}\tau^{\frac{d\beta}{\alpha}(\frac{\alpha-1+\mu}{d}-\frac{1}{p})}\|u\|_{\dot{H}^{\mu,p}}
\sup_{\tau>0}\tau^{\frac{d\beta}{\alpha}(\frac{2\alpha-2}{d}-\frac{1}{q})}\|n\|_{q}\\
&~~~~~~~~~~~~~~~~~~~~~~~~~~+\sup_{\tau>0}\tau^{\frac{d\beta}{\alpha}(\frac{2\alpha-2+\mu}{d}-\frac{1}{q})}\|n\|_{\dot{H}^{\mu,q}}
\sup_{\tau>0}\tau^{\frac{d\beta}{\alpha}(\frac{\alpha-1}{d}-\frac{1}{p})}\|u\|_{p}\big),
\end{split}
\]
where the exponents $\alpha,\beta,\mu,p,q,r$ satisfy the conditions in Assumption \ref{assum2}, which imply that
\[
-\frac{d\beta}{\alpha p}-\frac{\beta}{\alpha}+\beta>0, ~~\frac{d\beta}{\alpha q}+\frac{d\beta}{\alpha p}-\frac{(3\alpha-3+\mu)\beta}{\alpha}+1>0.
\]
Similarly, Proposition \ref{pro:MLopLpqesti} and Lemma \ref{highregu} lead to
\[
\begin{split}
&~~~\int_0^t(t-\tau)^{\beta-1}
\|\nabla \cdot E_{\beta,\beta}\big(-(t-\tau)^\beta (-\Delta)^{\frac{\alpha}{2}}\big)(-\Delta)^{\frac{\mu}{2}}(n\nabla v)\|_{q}\,d\tau\\
&\leq C t^{-\frac{d\beta}{\alpha}(\frac{2\alpha-2+\mu}{d}-\frac{1}{q})}\mathbf{B}(-\frac{d\beta}{\alpha r}-\frac{\beta}{\alpha}+\beta, \frac{d\beta}{\alpha}(\frac{1}{q}+\frac{1}{r})-\frac{(3\alpha-3+\mu)\beta}{\alpha}+1)\\
&~~~\times\big(\sup_{\tau>0}\tau^{\frac{d\beta}{\alpha}(\frac{\alpha-1+\mu}{d}-\frac{1}{r})}\|\nabla v\|_{\dot{H}^{\mu,r}}
\sup_{\tau>0}\tau^{\frac{d\beta}{\alpha}(\frac{2\alpha-2}{d}-\frac{1}{q})}\|n\|_{q}\\
&~~~~~~~~+\sup_{\tau>0}\tau^{\frac{d\beta}{\alpha}(\frac{2\alpha-2+\mu}{d}-\frac{1}{q})}\|n\|_{\dot{H}^{\mu,q}}
\sup_{\tau>0}\tau^{\frac{d\beta}{\alpha}(\frac{\alpha-1}{d}-\frac{1}{r})}\|\nabla v\|_{r}\big),
\end{split}
\]
where Assumption \ref{assum2} ensure that
\[
-\frac{d\beta}{\alpha r}-\frac{\beta}{\alpha}+\beta>0,~ \frac{d\beta}{\alpha q}+\frac{d\beta}{\alpha r}-\frac{(3\alpha-3+\mu)\beta}{\alpha}+1>0.
\]
Hence we obtain that
\begin{equation}\label{hinestbdd}
\begin{split}
\sup_{t>0}t^{\frac{d\beta}{\alpha}(\frac{2\alpha-2+\mu}{d}-\frac{1}{q})}\|\tilde{n}(t)\|_{\dot{H}^{\mu,q}}&\leq C\| n_0\|_{\frac{d}{2\alpha-2}}
+\sup_{t>0}t^{\frac{d\beta}{\alpha}(\frac{2\alpha-2+\mu}{d}-\frac{1}{q})}\|n(t)\|_{\dot{H}^{\mu,q}}\\
&~~~+
C\sup_{t>0}t^{\frac{d\beta}{\alpha}(\frac{\alpha-1+\mu}{d}-\frac{1}{p})}\|u\|_{\dot{H}^{\mu,p}}
\sup_{t>0}t^{\frac{d\beta}{\alpha}(\frac{2\alpha-2}{d}-\frac{1}{q})}\|n\|_{q}\\
&~~~+C\sup_{t>0}t^{\frac{d\beta}{\alpha}(\frac{2\alpha-2+\mu}{d}-\frac{1}{q})}\|n\|_{\dot{H}^{\mu,q}}
\sup_{t>0}t^{\frac{d\beta}{\alpha}(\frac{\alpha-1}{d}-\frac{1}{p})}\|u\|_{p}\\
&~~~+C\sup_{t>0}t^{\frac{d\beta}{\alpha}(\frac{\alpha-1+\mu}{d}-\frac{1}{r})}\|\nabla v\|_{\dot{H}^{\mu,r}}
\sup_{t>0}t^{\frac{d\beta}{\alpha}(\frac{2\alpha-2}{d}-\frac{1}{q})}\|n\|_{q}\\
&~~~+C\sup_{t>0}t^{\frac{d\beta}{\alpha}(\frac{2\alpha-2+\mu}{d}-\frac{1}{q})}\|n\|_{\dot{H}^{\mu,q}}
\sup_{t>0}t^{\frac{d\beta}{\alpha}(\frac{\alpha-1}{d}-\frac{1}{r})}\|\nabla v\|_{r}.
\end{split}
\end{equation}
Based on \eqref{mapfmildsolu}, we utilize Proposition \ref{pro:MLopLpqesti} and Lemma \ref{highregu} again to calculate that
\begin{equation}\label{hivestbdd}
\begin{split}
&\sup_{t>0}t^{\frac{d\beta}{\alpha}(\frac{\alpha-1+\mu}{d}-\frac{1}{r})}\|\nabla\tilde{ v}(t)\|_{\dot{H}^{\mu,r}}
\leq C\| \nabla v_0\|_{\frac{d}{\alpha-1}}
+\sup_{t>0}t^{\frac{d\beta}{\alpha}(\frac{\alpha-1+\mu}{d}-\frac{1}{r})}\|\nabla v\|_{\dot{H}^{\mu,r}}\\
&~~~~+C\mathbf{B}(\beta-\frac{d\beta}{\alpha p}-\frac{\beta}{\alpha}, \frac{d\beta}{\alpha }(\frac{1}{p}+\frac{1}{r})-\frac{(2\alpha-2+\mu)\beta}{\alpha}+1)
\big(\sup_{t>0}t^{\frac{d\beta}{\alpha}(\frac{\alpha-1+\mu}{d}-\frac{1}{p})}\|u\|_{\dot{H}^{\mu,p}}\\
&~~~~~~~\times\sup_{t>0}t^{\frac{d\beta}{\alpha}(\frac{\alpha-1}{d}-\frac{1}{r})}\|\nabla v\|_{r}+\sup_{t>0}t^{\frac{d\beta}{\alpha}(\frac{\alpha-1+\mu}{d}-\frac{1}{r})}\|\nabla v\|_{\dot{H}^{\mu,r}}
\sup_{t>0}t^{\frac{d\beta}{\alpha}(\frac{\alpha-1}{d}-\frac{1}{p})}\|u\|_{p}\big)\\
&~~~~+C\mathbf{B}(\beta-\frac{d\beta}{\alpha }(\frac{1}{q}-\frac{1}{r})-\frac{\beta}{\alpha}, \frac{d\beta}{\alpha q}-\frac{(2\alpha-2+\mu)\beta}{\alpha}+1)\sup_{t>0}t^{\frac{d\beta}{\alpha}(\frac{2\alpha-2+\mu}{d}-\frac{1}{q})}\|n\|_{\dot{H}^{\mu,q}}\\
&~\leq C\| \nabla v_0\|_{\frac{d}{\alpha-1}}
+\sup_{t>0}t^{\frac{d\beta}{\alpha}(\frac{\alpha-1+\mu}{d}-\frac{1}{r})}\|\nabla v\|_{\dot{H}^{\mu,r}}+C\sup_{t>0}t^{\frac{d\beta}{\alpha}(\frac{2\alpha-2+\mu}{d}-\frac{1}{q})}\|n\|_{\dot{H}^{\mu,q}}\\
&~~~~+C\sup_{t>0}t^{\frac{d\beta}{\alpha}(\frac{\alpha-1+\mu}{d}-\frac{1}{p})}\|u\|_{\dot{H}^{\mu,p}}\sup_{t>0}t^{\frac{d\beta}{\alpha}(\frac{\alpha-1}{d}-\frac{1}{r})}\|\nabla v\|_{r}\\
&~~~~+C\sup_{t>0}t^{\frac{d\beta}{\alpha}(\frac{\alpha-1+\mu}{d}-\frac{1}{r})}\|\nabla v\|_{\dot{H}^{\mu,r}}
\sup_{t>0}t^{\frac{d\beta}{\alpha}(\frac{\alpha-1}{d}-\frac{1}{p})}\|u\|_{p},
\end{split}
\end{equation}
and
\begin{equation}\label{hiuestbdd}
\begin{split}
&\sup_{t>0}t^{\frac{d\beta}{\alpha}(\frac{\alpha-1+\mu}{d}-\frac{1}{p})}\|\tilde{u}(t)\|_{\dot{H}^{\mu,p}}\leq C\| u_0\|_{\frac{d}{\alpha-1}}
+\sup_{t>0}t^{\frac{d\beta}{\alpha}(\frac{\alpha-1+\mu}{d}-\frac{1}{p})}\|u(t)\|_{\dot{H}^{\mu,p}}\\
&~~+
C\mathbf{B}(\beta-\frac{d\beta}{\alpha p}-\frac{\beta}{\alpha}, \frac{2d\beta}{\alpha p}-\frac{(2\alpha-2+\mu)\beta}{\alpha}+1)\sup_{t>0}t^{\frac{d\beta}{\alpha}(\frac{\alpha-1+\mu}{d}-\frac{1}{p})}\|u\|_{\dot{H}^{\mu,p}}
\sup_{t>0}t^{\frac{d\beta}{\alpha}(\frac{\alpha-1}{d}-\frac{1}{p})}\|u\|_{p}\\
&~~+C\mathbf{B}(\beta-\frac{d\beta}{\alpha}(\frac{1}{q}-\frac{1}{p})-\frac{(1+\mu)\beta}{\alpha}, \frac{d\beta}{\alpha q}-\frac{(2\alpha-2)\beta}{\alpha}+1)\sup_{t>0}t^{\frac{d\beta}{\alpha}(\frac{2\alpha-2}{d}-\frac{1}{q})}\|n\|_{q}\|\nabla\phi\|_{d}\\
&\leq C\| u_0\|_{\frac{d}{\alpha-1}}+\sup_{t>0}t^{\frac{d\beta}{\alpha}(\frac{\alpha-1+\mu}{d}-\frac{1}{p})}\|u(t)\|_{\dot{H}^{\mu,p}}
+C\sup_{t>0}t^{\frac{d\beta}{\alpha}(\frac{\alpha-1+\mu}{d}-\frac{1}{p})}\|u\|_{\dot{H}^{\mu,p}}
\sup_{t>0}t^{\frac{d\beta}{\alpha}(\frac{\alpha-1}{d}-\frac{1}{p})}\|u\|_{p}\\
&~~+C\sup_{t>0}t^{\frac{d\beta}{\alpha}(\frac{2\alpha-2}{d}-\frac{1}{q})}\|n\|_{q}\|\nabla\phi\|_{d},
\end{split}
\end{equation}
where $p,q,r,\alpha,\beta,\mu$ in Assumption \ref{assum2} satisfy
\begin{align*}
&\beta-\frac{d\beta}{\alpha p}-\frac{\beta}{\alpha}>0,~~~\frac{d\beta}{\alpha q}-\frac{(2\alpha-2)\beta}{\alpha}+1>0,~~~\beta-\frac{d\beta}{\alpha }(\frac{1}{q}-\frac{1}{r})-\frac{\beta}{\alpha}>0,\\
 &\frac{d\beta}{\alpha }(\frac{1}{p}+\frac{1}{r})-\frac{(2\alpha-2+\mu)\beta}{\alpha}+1>0, ~~~\frac{d\beta}{\alpha q}-\frac{(2\alpha-2+\mu)\beta}{\alpha}+1>0, \\
&\frac{2d\beta}{\alpha p}-\frac{(2\alpha-2+\mu)\beta}{\alpha}+1>0,~~~ \beta-\frac{d\beta}{\alpha}(\frac{1}{q}-\frac{1}{p})-\frac{(1+\mu)\beta}{\alpha}>0.
\end{align*}

The time continuity follows in a similar approach as the argument in the proof of Theorem \ref{thm:localmildexis}, then we have
\[
\begin{split}
&t^{\frac{d\beta}{\alpha}(\frac{2\alpha-2+\mu}{d}-\frac{1}{q})}\tilde{n}\in C((0,\infty);\dot{H}^{\mu,q}(\mathbb{R}^d)),\\
&t^{\frac{d\beta}{\alpha}(\frac{\alpha-1+\mu}{d}-\frac{1}{r})}\nabla\tilde{v}\in C((0,\infty);\dot{H}^{\mu,r}(\mathbb{R}^d)),\\
&t^{\frac{d\beta}{\alpha}(\frac{\alpha-1+\mu}{d}-\frac{1}{p})}\tilde{u}\in C((0,\infty);\dot{H}^{\mu,p}(\mathbb{R}^d)).
\end{split}
\]
From \eqref{nestbdd},\eqref{vtildeestbdd}, \eqref{uestbdd} and \eqref{hinestbdd}-\eqref{hiuestbdd}, we conclude that $F(n_0,v_0,u_0,\phi,n,v,u)\in \mathcal{Y}_\mu$ with
\[
\|F(n_0,v_0,u_0,\phi,n,v,u)\|_{\mathcal{Y}_\mu}\leq C\|[n_0,v_0,u_0,\phi]\|_{\mathcal{X}}+C\|[n,v,u]\|_{\mathcal{Y}_\mu}(1+\|[n,v,u]\|_{\mathcal{Y}_\mu}+\|\nabla\phi\|_{d}).
\]
Following what we have done before in the proof of Step 3 of Theorem \ref{thm:mildexis}, recall the definition of $N(t)$, $\nabla V(t)$ and $U(t)$ listed in \eqref{Nt}, \eqref{Vt} and \eqref{Ut}, respectively, we arrive at
\begin{gather}\label{hilNmildsolu}
\begin{split}
\|N(t)\|_{\dot{H}^{\mu,q}}\leq&Ct^{-\frac{d\beta}{\alpha}(\frac{2\alpha-2+\mu}{d}-\frac{1}{q})}\mathbf{B}(-\frac{d\beta}{\alpha p}-\frac{\beta}{\alpha}+\beta, \frac{d\beta}{\alpha }(\frac{1}{q}+\frac{1}{ p})-\frac{(3\alpha-3+\mu)\beta}{\alpha}+1)\\
&~~\times\big(\sup_{t>0}t^{\frac{d\beta}{\alpha}(\frac{\alpha-1+\mu}{d}-\frac{1}{p})}\|\bar{u}\|_{\dot{H}^{\mu,p}}
\sup_{t>0}t^{\frac{d\beta}{\alpha}(\frac{2\alpha-2}{d}-\frac{1}{q})}\|\bar{n}\|_{q}\\
&~~~~~+\sup_{t>0}t^{\frac{d\beta}{\alpha}(\frac{2\alpha-2+\mu}{d}-\frac{1}{q})}\|\bar{n}\|_{\dot{H}^{\mu,q}}
\sup_{t>0}t^{\frac{d\beta}{\alpha}(\frac{\alpha-1}{d}-\frac{1}{p})}\|\bar{u}\|_{p}\big)\\
&+Ct^{-\frac{d\beta}{\alpha}(\frac{2\alpha-2+\mu}{d}-\frac{1}{q})}\mathbf{B}(-\frac{d\beta}{\alpha r}-\frac{\beta}{\alpha}+\beta, \frac{d\beta}{\alpha}(\frac{1}{q}+\frac{1}{r})-\frac{(3\alpha-3+\mu)\beta}{\alpha}+1)\\
&~~\times\big(\sup_{t>0}t^{\frac{d\beta}{\alpha}(\frac{\alpha-1+\mu}{d}-\frac{1}{r})}\|\nabla \bar{v}\|_{\dot{H}^{\mu,r}}
\sup_{t>0}t^{\frac{d\beta}{\alpha}(\frac{2\alpha-2}{d}-\frac{1}{q})}\|\bar{n}\|_{q}\\
&~~~~~+\sup_{t>0}t^{\frac{d\beta}{\alpha}(\frac{2\alpha-2+\mu}{d}-\frac{1}{q})}\|\bar{n}\|_{\dot{H}^{\mu,q}}
\sup_{t>0}t^{\frac{d\beta}{\alpha}(\frac{\alpha-1}{d}-\frac{1}{r})}\|\nabla \bar{v}\|_{r}\big),
\end{split}
\end{gather}
\begin{gather}\label{hilVmildsolu}
\begin{split}
\|\nabla V(t)\|_{\dot{H}^{\mu,r}}\leq&Ct^{-\frac{d\beta}{\alpha}(\frac{\alpha-1+\mu}{d}-\frac{1}{r})}\mathbf{B}(\beta-\frac{d\beta}{\alpha p}-\frac{\beta}{\alpha}, \frac{d\beta}{\alpha }(\frac{1}{p}+\frac{1}{r})-\frac{(2\alpha-2+\mu)\beta}{\alpha}+1)\\
&\times\big(\sup_{t>0}t^{\frac{d\beta}{\alpha}(\frac{\alpha-1+\mu}{d}-\frac{1}{p})}\|\bar{u}\|_{\dot{H}^{\mu,p}}
\sup_{t>0}t^{\frac{d\beta}{\alpha}(\frac{ \alpha-1}{d}-\frac{1}{r})}\|\nabla\bar{v}\|_{r}\\
&~~~~+C\sup_{t>0}t^{\frac{d\beta}{\alpha}(\frac{ \alpha-1+\mu}{d}-\frac{1}{r})}\|\nabla\bar{v}\|_{\dot{H}^{\mu,r}}
\sup_{t>0}t^{\frac{d\beta}{\alpha}(\frac{\alpha-1}{d}-\frac{1}{p})}\|\bar{u}\|_{p}\big),
\end{split}
\end{gather}
and
\begin{gather}\label{hilUmildsolu}
\begin{split}
\| U(t)\|_{\dot{H}^{\mu,p}}\leq&Ct^{-\frac{d\beta}{\alpha}(\frac{\alpha-1+\mu}{d}-\frac{1}{p})}\mathbf{B}(\beta-\frac{d\beta}{\alpha p}-\frac{\beta}{\alpha}, \frac{2d\beta}{\alpha p}-\frac{(2\alpha-2+\mu)\beta}{\alpha}+1)\\
&\times\big(\sup_{t>0}t^{\frac{d\beta}{\alpha}(\frac{\alpha-1+\mu}{d}-\frac{1}{p})}\|\bar{u}\|_{\dot{H}^{\mu,p}}
\sup_{t>0}t^{\frac{d\beta}{\alpha}(\frac{ \alpha-1}{d}-\frac{1}{p})}\| \bar{u}\|_{p}\big).
\end{split}
\end{gather}
Therefore, \eqref{lNmildsolu}, \eqref{lVmildsolu}, \eqref{lUmildsolu} and \eqref{hilNmildsolu}-\eqref{hilUmildsolu} indicate that for each $[n_0,v_0,u_0,\phi]\in \mathcal{X}$ and each $[n,v,u]\in \mathcal{Y}_\mu$, it holds
\begin{gather*}
\lim_{\|[\bar{n},\bar{v},\bar{u}]\|_{\mathcal{Y}_\mu}\rightarrow 0}\frac{\|[N,V,U]\|_{\mathcal{Y}_\mu}}{\|[\bar{n},\bar{v},\bar{u}]\|_{\mathcal{Y}_\mu}}=0,
\end{gather*}
which implies that the Fr\'{e}chet derivative of $F$ at point $[n_0,v_0,u_0,\phi,n+\bar{n},v+\bar{v},u+\bar{u}]\in \mathcal{X}\times \mathcal{Y}_\mu$ in the direction to $[n,v,u]$ is equal to $\mathcal{L}_{[n,v,u]}(\bar{n},\bar{v},\bar{u})$.

Applying Banach implicit function theorem, we observe that, for some $\bar{M}>0$, there is a $C^1$-map $\bar{g}:\mathcal{X}_{\bar{M}}\rightarrow \mathcal{Y}_{\mu,\bar{M}}$ such that
\[
\begin{split}
\bar{g}(0,0,0,0)&=[0,0,0],\\
F(n_0,v_0,u_0,\phi,\bar{g}(n_0,v_0,u_0,\phi))&=[0,0,0],~~\forall [n_0,v_0,u_0,\phi]\in \mathcal{X}_{\bar{M}},
\end{split}
\]
where
\[
\begin{split}
\mathcal{X}_{\bar{M}}:&=\{[n_0,v_0,u_0,\phi]\in \mathcal{X}:\|[n_0,v_0,u_0,\phi]\|_{\mathcal{X}}<\bar{M}\},\\
\mathcal{Y}_{\mu,\bar{M}}:&=\{[n,v,u]\in \mathcal{Y}_\mu:\|[n,v,u]\|_{\mathcal{Y}_\mu}<\bar{M}\}.
\end{split}
\]
Thus, the existence and uniqueness of mild solution to system \eqref{chemo-fluid-eq} are the consequence of the existence of the continuously differential map $\bar{g}$ from $\mathcal{X}_{\bar{M}}$ to $\mathcal{Y}_{\mu,\bar{M}}$.
\end{proof}

\section{Properties of mild solutions}\label{sec5}
In this section, we explore the integrability of the local mild solution if $n_0\in L^1(\mathbb{R}^d)\cap L^{q}(\mathbb{R}^d),~v_0\in L^{1}(\mathbb{R}^d),~\nabla v_0\in L^{r}(\mathbb{R}^d)$ and $u_0\in L^{1}(\mathbb{R}^d)\cap L^{p}(\mathbb{R}^d)$, and then obtain the mass conservation. In addition, we prove the decay estimates and stability of global mild solution as a byproduct of Theorem \ref{thm:mildexis}. At the end of this paper, we investigate the self-similar solution whenever taking $\gamma=0$ in system \eqref{chemo-fluid-eq}.

\begin{lem}\label{PQinte}\cite{LLW}
Assume $P(x,t)$, $Q(x,t)$ to be those defined in \eqref{PQ}, then $P(x,t)$, $Q(x,t)$ are nonnegative and integrable. In particular, it holds
\[
\int_{\mathbb{R}^d}P(x,t)\,dx=1,~~~~ \int_{\mathbb{R}^d}Q(x,t)\,dx=\frac{1}{\Gamma(\beta)}.
\]
\end{lem}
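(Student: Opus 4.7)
The plan is to reduce everything to the representation formulas \eqref{MLM} already recorded in the paper, which express the Mittag-Leffler operators as averages of the fractional heat kernel against the Mainardi function. Since the excerpt identifies $P(x,t)$ and $Q(x,t)$ as the convolution kernels of $E_\beta(-t^\beta(-\Delta)^{\alpha/2})$ and (the appropriate normalization of) $E_{\beta,\beta}(-t^\beta(-\Delta)^{\alpha/2})$ respectively, the representation reads
\begin{align*}
P(x,t) &= \int_0^\infty M_\beta(s)\,K_{st^\beta}(x)\,ds,\\
Q(x,t) &= \int_0^\infty \beta s\,M_\beta(s)\,K_{st^\beta}(x)\,ds,
\end{align*}
where $K_{st^\beta}$ is the fractional heat kernel defined in \eqref{ktx}.

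For nonnegativity, I would invoke two facts already at hand: Lemma \ref{lemMbeta(s)} gives $M_\beta(s)\ge 0$ for all $s\ge 0$, while $K_t(x)\ge 0$ because $K_t$ is the transition density of the symmetric $\alpha$-stable process (for $1<\alpha\le 2$ this is standard; it can also be read off from Lemma \ref{estiKtx} by noting that $e^{-t|\xi|^\alpha}$ is the Fourier transform of a probability density, or by Bochner subordination of the Gaussian kernel when $0<\alpha<2$). The integrands above are therefore nonnegative, and $P(x,t),Q(x,t)\ge 0$ follows.

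For the mass identities, the key input is $\int_{\mathbb{R}^d}K_t(x)\,dx = \widehat{K_t}(0)=e^{-t\cdot 0}=1$, available directly from \eqref{ktx}. Nonnegativity of the integrands lets me apply Fubini–Tonelli without fuss, yielding
\[
\int_{\mathbb{R}^d}P(x,t)\,dx=\int_0^\infty M_\beta(s)\,ds=\frac{\Gamma(1)}{\Gamma(1)}=1,
\]
\[
\int_{\mathbb{R}^d}Q(x,t)\,dx=\beta\int_0^\infty s\,M_\beta(s)\,ds=\beta\cdot\frac{\Gamma(2)}{\Gamma(1+\beta)}=\frac{1}{\Gamma(\beta)},
\]
where both reductions of the $s$-integral use Lemma \ref{lemMbeta(s)} with $r=0$ and $r=1$ respectively, together with $\Gamma(1+\beta)=\beta\Gamma(\beta)$. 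Integrability of $P$ and $Q$ on $\mathbb{R}^d$ is the by-product: the first line gives $\|P(\cdot,t)\|_1=1$ and the second gives $\|Q(\cdot,t)\|_1=1/\Gamma(\beta)$.

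I do not expect a genuine obstacle here; the only points worth checking carefully are (a) the pointwise positivity of $K_t$, which is a standard property of $\alpha$-stable densities and not really proved elsewhere in the excerpt, and (b) the applicability of Fubini, which is automatic once (a) is in hand because the two-variable integrand is measurable and nonnegative. Everything else is a direct bookkeeping computation from \eqref{MLM} and Lemma \ref{lemMbeta(s)}.
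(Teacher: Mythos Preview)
The paper does not supply its own proof of this lemma; it is quoted from \cite{LLW}. Your argument is correct and, in fact, is a clean self-contained derivation using only ingredients already present in the paper (the representations \eqref{MLM}, Lemma~\ref{lemMbeta(s)}, and the basic fact $\widehat{K_t}(0)=1$). One small notational wrinkle: in \eqref{PQ} the second kernel is written $Y(\cdot,t)$ and carries the factor $t^{\beta-1}$, whereas the lemma and its application in Section~\ref{5.1} use ``$Q$'' for the kernel of $E_{\beta,\beta}(-t^\beta(-\Delta)^{\alpha/2})$ \emph{without} that factor; your reading matches the intended use, so no harm done. The only step not literally established in the excerpt is $K_t\ge 0$, which you correctly flag and which is the standard positivity of the symmetric $\alpha$-stable density.
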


\subsection{Proof of Theorem \ref{thm:localL1mass}}\label{5.1}

\begin{proof}
In this proof, we consider the Banach space
\[
\begin{split}
X^1_{T}:=\{(n,v,u): &~n\in C((0,T];L^1(\mathbb{R}^d)\cap L^q(\mathbb{R}^d) ), v \in C((0,T];L^1(\mathbb{R}^d)),\\
&\nabla v \in C((0,T];L^r(\mathbb{R}^d)),u\in C((0,T];L^p(\mathbb{R}^d))\}
\end{split}\]
with the norm given by
\begin{align*}
\|(n,v,u)\|_{X^1_{T}}:=&\sup_{0< t\leq T}\|n(t)\|_{1}+\sup_{0< t\leq T}\|n(t)\|_{q}+\sup_{0< t\leq T}\|v(t)\|_{1}+\sup_{0< t\leq T}\|\nabla v(t)\|_{r}+\sup_{0< t\leq T}\|u(t)\|_{p},
\end{align*}
and the closed subset $S_1:=\{(n,v,u)\in X^1_T:\|(n,v,u)\|_{X^1_T}\leq 2M_1\}.$ The mapping $\mathcal{H}$ is the one defined in \eqref{mildsoluH}.

Next for $0< t\leq T$, we will estimate $\|\mathcal{H}_1(n,v,u)\|_{1}$, $\|\mathcal{H}_2(n,v,u)\|_{1}$ and $\|\mathcal{H}_3(n,v,u)\|_{1}$, respectively. Proposition \ref{pro:MLopLpqesti}, H\"{o}lder's inequality and interpolation inequality imply that
\begin{equation}\label{L1H1}
\begin{split}
\|\mathcal{H}_1(n,v,u)\|_{1}\leq& \|E_\beta(-t^\beta(-\Delta)^{\frac{\alpha}{2}})n_0\|_{1}+\int_0^t(t-\tau)^{\beta-1}\|\nabla \cdot E_{\beta,\beta}(-(t-\tau)^\beta(-\Delta)^{\frac{\alpha}{2}})(u n)\|_{1}\,d\tau\\
&~+\int_0^t(t-\tau)^{\beta-1}\|\nabla\cdot E_{\beta,\beta}(-(t-\tau)^\beta(-\Delta)^{\frac{\alpha}{2}})(n\nabla v)\|_{1}\,d\tau\\
\leq&\|n_0\|_{1}+ C\int_0^t(t-\tau)^{-\frac{\beta}{\alpha}+\beta-1}\big(\|u\|_{p}\|n\|_{\frac{p}{p-1}}+\|\nabla v\|_{r}\|n\|_{\frac{r}{r-1}}\big)\,d\tau\\
\leq&\|n_0\|_{1}+ C\int_0^t(t-\tau)^{-\frac{\beta}{\alpha}+\beta-1}\big(\|u\|_{p}\|n\|^{1-\theta_1}_{1}\|n\|^{\theta_1}_{q}+\|\nabla v\|_{r}\|n\|^{1-\theta_2}_{1}\|n\|^{\theta_2}_{q}\big)\,d\tau\\
\leq& \|n_0\|_{1}+ CT^{-\frac{\beta}{\alpha}+\beta}\big(\|u\|_{C((0,T];L^p(\mathbb{R}^d))}\|n\|^{1-\theta_1}_{C((0,T];L^1(\mathbb{R}^d))}\|n\|^{\theta_1}_{C((0,T];L^q(\mathbb{R}^d))}\\
&~+ \|\nabla v\|_{C((0,T];L^r(\mathbb{R}^d))}\|n\|^{1-\theta_2}_{C((0,T];L^1(\mathbb{R}^d))}\|n\|^{\theta_2}_{C((0,T];L^q(\mathbb{R}^d))}\big),
\end{split}
\end{equation}
where $0<\theta_1,\theta_2<1$ satisfy $1-\frac{1}{p}=1-\theta_1+\frac{\theta_1}{q}$ and $1-\frac{1}{r}=1-\theta_2+\frac{\theta_2}{q}$.

With a similar fashion as \eqref{L1H1}, we have the following estimates
\begin{equation}\label{L1H2}
\begin{split}
\|\mathcal{H}_2(n,v,u)\|_{1}\leq&\|v_0\|_{1}+CT^{\beta}\|n\|_{C((0,T];L^1(\mathbb{R}^d))}\\
&~+CT^{-\frac{\beta}{\alpha}+\beta}\big(\|u\|_{C((0,T];L^p(\mathbb{R}^d))}
\|v\|^{1-\theta_3}_{C((0,T];L^1(\mathbb{R}^d))}\|\nabla v\|^{\theta_3}_{C((0,T];L^r(\mathbb{R}^d))}\big),
\end{split}
\end{equation}
where $0<\theta_3=\frac{rd}{p(rd-d+r)}<1$.

Combining \eqref{localmildH1}, \eqref{localmildH2}, \eqref{localmildH3} and \eqref{L1H1}, \eqref{L1H2}, the claim that $\mathcal{H}:S_1\rightarrow S_1$ is a strict contract mapping can be proved as the same argunent with the proof of Theorem \ref{thm:localmildexis}. Then the existence
\[
\begin{split}
&n\in C((0,T];L^1(\mathbb{R}^d)\cap L^q(\mathbb{R}^d) ), v \in C((0,T];L^1(\mathbb{R}^d)),\\
&\nabla v \in C((0,T];L^r(\mathbb{R}^d)),u\in C((0,T];L^p(\mathbb{R}^d)),
\end{split}
\]
can be performed similarly as we did in the proof of Theorem \ref{thm:localmildexis} in Section \ref{3.1}.

As soon as we have the integrability, that is
\[
\begin{split}
\int_{\mathbb{R}^d}n(x,t)\,dx=&\int_{\mathbb{R}^d}E_\beta(-t^\beta(-\Delta)^{\frac{\alpha}{2}})n_0(x)\,dx\\
&-\int_{\mathbb{R}^d}\int_0^t(t-\tau)^{\beta-1}E_{\beta,\beta}(-(t-\tau)^\beta(-\Delta)^{\frac{\alpha}{2}})\big(\nabla\cdot(un+n\nabla v)\big)\,d\tau\,dx,
\end{split}
\]
by Lemma \ref{PQinte}, we then get
\[
\int_{\mathbb{R}^d}E_\beta(-t^\beta(-\Delta)^{\frac{\alpha}{2}})n_0(x)\,dx=\int_{\mathbb{R}^d}n_0(x)\,dx.
\]
For any $t>0$, we have $un+n\nabla v\in C((0,t];L^1(\mathbb{R}^d))$ since $(n,v,u)\in X^1_T$. By approximating $un+n\nabla v$ with $C_c^\infty((0,t]\times\mathbb{R}^d)$, we find
\[
\int_{\mathbb{R}^d}\int_0^t(t-\tau)^{\beta-1}E_{\beta,\beta}(-(t-\tau)^\beta(-\Delta)^{\frac{\alpha}{2}})\big(\nabla\cdot(un+n\nabla v)\big)\,d\tau\,dx=0.
\]
Thus, we obtain the following mass conservation
\[
\int_{\mathbb{R}^d}n(x,t)\,dx=\int_{\mathbb{R}^d}n_0(x)\,dx.
\]

In addition, on the one hand, whenever $\gamma=0$, we derive that
\begin{align}\label{vL10}
\begin{split}
\int_{\mathbb{R}^d}v(x,t)\,dx=&\int_{\mathbb{R}^d}E_\beta(-t^\beta(-\Delta)^{\frac{\alpha}{2}})v_0(x)\,dx\\
&-\int_{\mathbb{R}^d}\int_0^t(t-\tau)^{\beta-1}E_{\beta,\beta}(-(t-\tau)^\beta(-\Delta)^{\frac{\alpha}{2}})\big(\nabla\cdot(uv)\big)\,d\tau\,dx\\
&+\int_{\mathbb{R}^d}\int_0^t(t-\tau)^{\beta-1}E_{\beta,\beta}(-(t-\tau)^\beta(-\Delta)^{\frac{\alpha}{2}})n\,d\tau\,dx.
\end{split}
\end{align}
In light of Lemma \ref{PQinte}, we get
\[
\begin{split}
&\int_{\mathbb{R}^d}\int_0^t(t-\tau)^{\beta-1}E_{\beta,\beta}(-(t-\tau)^\beta(-\Delta)^{\frac{\alpha}{2}})n(x,\tau)\,d\tau\,dx\\
=&\frac{1}{\Gamma(\beta)}\int_0^t(t-\tau)^{\beta-1}\int_{\mathbb{R}^d}n(x,\tau)\,dx\,d\tau\\
=&\frac{t^\beta}{\beta\Gamma(\beta)}\int_{\mathbb{R}^d}n_0(x)\,dx.
\end{split}
\]
Then \eqref{vL10} becomes
\[
\int_{\mathbb{R}^d}v(x,t)\,dx=\int_{\mathbb{R}^d}v_0(x)\,dx+\frac{t^\beta}{\beta\Gamma(\beta)}\int_{\mathbb{R}^d}n_0(x)\,dx.
\]
On the other hand, whenever $\gamma>0$, we obtain
\begin{align}\label{vL11}
\begin{split}
\int_{\mathbb{R}^d}v(x,t)\,dx=&\int_{\mathbb{R}^d}E_\beta(t^\beta(-(-\Delta)^{\frac{\alpha}{2}}-\gamma))v_0(x)\,dx\\
&-\int_{\mathbb{R}^d}\int_0^t(t-\tau)^{\beta-1}E_{\beta,\beta}((t-\tau)^\beta(-(-\Delta)^{\frac{\alpha}{2}}-\gamma))\big(\nabla\cdot(uv)\big)\,d\tau\,dx\\
&+\int_{\mathbb{R}^d}\int_0^t(t-\tau)^{\beta-1}E_{\beta,\beta}((t-\tau)^\beta(-(-\Delta)^{\frac{\alpha}{2}}-\gamma))n\,d\tau\,dx.
\end{split}
\end{align}
Since the semigroup property of Mittag-Leffler functions and Lemma \ref{PQinte}, one has
\[
\begin{split}
\int_{\mathbb{R}^d}E_\beta(t^\beta(-(-\Delta)^{\frac{\alpha}{2}}-\gamma))v_0(x)\,dx=&\int_{\mathbb{R}^d}E_{\beta}(-\gamma t^\beta)E_{\beta}(-t^\beta(-\Delta)^{\frac{\alpha}{2}})v_0(x)\,dx\\
=&E_{\beta}(-\gamma t^\beta)\int_{\mathbb{R}^d}v_0(x)\,dx.
\end{split}
\]
Notice that $E_{\beta,\beta}(z)=\beta E'_\beta(z)$ and $E_\beta(0)=1$, then the following identities hold to be true.
\[
\begin{split}
&\int_{\mathbb{R}^d}\int_0^t(t-\tau)^{\beta-1}E_{\beta,\beta}((t-\tau)^\beta(-(-\Delta)^{\frac{\alpha}{2}}-\gamma))n\,d\tau\,dx\\
=&\int_{\mathbb{R}^d}\int_0^t(t-\tau)^{\beta-1}E_{\beta,\beta}(-\gamma(t-\tau)^\beta)E_{\beta,\beta}(-(t-\tau)^\beta(-\Delta)^{\frac{\alpha}{2}})n(x,\tau)\,d\tau\,dx\\
=&\frac{1}{\gamma\Gamma(\beta)}\int_0^t-\beta\gamma(t-\tau)^{\beta-1}E'_\beta(-\gamma(t-\tau)^\beta)\,d\tau\int_{\mathbb{R}^d}n_0(x)\,dx\\
=&\frac{1-E_\beta(-\gamma t^\beta)}{\gamma\Gamma(\beta)}\int_{\mathbb{R}^d}n_0(x)\,dx.
\end{split}
\]
Thus, \eqref{vL11} becomes
\[
\int_{\mathbb{R}^d}v(x,t)\,dx=E_\beta(-\gamma t^\beta)\int_{\mathbb{R}^d}v_0(x)\,dx+\frac{1-E_\beta(-\gamma t^\beta)}{\gamma\Gamma(\beta)}\int_{\mathbb{R}^d}n_0(x)\,dx.
\]
\end{proof}

\begin{rem}
Due to technical reasons that the limitation of the relationship between $p,q$ in $L^p-L^q$ estimates of Mittag-Leffler operators and the nonlocal effect of fractional Laplacian $(-\Delta)^{\frac{\alpha}{2}}$, we cannot obtain the integrability and mass conservation of global mild solution to system \eqref{chemo-fluid-eq} in this paper.
\end{rem}

\subsection{Some properties to the global mild solutions}\label{5.2}

On the basis of Theorem \ref{thm:mildexis}, we can easily have the following corollaries.

\begin{cor} \label{thm:decayofmildsolu}
Suppose that $(n,v,u)$ is the mild solution to system \eqref{chemo-fluid-eq} given by Theorem \ref{thm:mildexis}, which exhibits the following decay behaviors, that is, there exists $C>0$ such that for any $t>0$,
\begin{equation}\label{decay1}
t^{\frac{d\beta}{\alpha}(\frac{2\alpha-2}{d}-\frac{1}{q})}\|n(t)-E_\beta(-t^\beta(-\Delta)^{\frac{\alpha}{2}})n_0\|_{q}\leq C,
\end{equation}
\begin{equation}\label{decay2}
t^{\frac{d\beta}{\alpha}(\frac{\alpha-1}{d}-\frac{1}{r})}\|\nabla v(t)-\nabla E_\beta(t^\beta(-(-\Delta)^{\frac{\alpha}{2}})-\gamma)v_0\|_{r}\leq C,
\end{equation}
\begin{equation}\label{decay3}
t^{\frac{d\beta}{\alpha}(\frac{\alpha-1}{d}-\frac{1}{p})}\|u(t)-E_\beta(-t^\beta(-\Delta)^{\frac{\alpha}{2}})u_0\|_{p}\leq C.
\end{equation}
\end{cor}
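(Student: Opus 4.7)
The plan is to read the three decay estimates as statements about the nonlinear Duhamel integrals appearing in the mild formulation \eqref{mildsolu0}. Rearranging the first equation of \eqref{mildsolu0} gives
\[
n(t)-E_\beta(-t^\beta(-\Delta)^{\frac{\alpha}{2}})n_0=-\int_0^t(t-\tau)^{\beta-1}E_{\beta,\beta}\bigl(-(t-\tau)^\beta(-\Delta)^{\frac{\alpha}{2}}\bigr)\bigl(u\cdot\nabla n+\nabla\cdot(n\nabla v)\bigr)\,d\tau,
\]
and analogous identities hold for $\nabla v-\nabla E_\beta(t^\beta(-(-\Delta)^{\alpha/2}-\gamma))v_0$ and $u-E_\beta(-t^\beta(-\Delta)^{\alpha/2})u_0$. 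Hence proving \eqref{decay1}--\eqref{decay3} reduces to bounding precisely the nonlinear pieces that were already estimated term by term in Section \ref{3.2} during the proof of Theorem \ref{thm:mildexis}.

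Concretely, I would recycle the estimates for $A_2$ and $A_3$ from \eqref{A2esti}--\eqref{0A3esti}, which give
\[
\Bigl\|\int_0^t(t-\tau)^{\beta-1}E_{\beta,\beta}\bigl(-(t-\tau)^\beta(-\Delta)^{\frac{\alpha}{2}}\bigr)(u\cdot\nabla n+\nabla\cdot(n\nabla v))\,d\tau\Bigr\|_q\leq C\,t^{-\frac{d\beta}{\alpha}(\frac{2\alpha-2}{d}-\frac{1}{q})}\|[n,v,u]\|_{\mathcal Y}^{2}.
\]
Since Theorem \ref{thm:mildexis} guarantees $\|[n,v,u]\|_{\mathcal Y}<M$, multiplying by $t^{\frac{d\beta}{\alpha}(\frac{2\alpha-2}{d}-\frac{1}{q})}$ yields \eqref{decay1} with $C=CM^2$. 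The estimate \eqref{decay2} follows in the same way by reusing the bounds on $Q_2$ and $Q_3$ in \eqref{Q2esti1}--\eqref{Q3esti}, together with the identity that expresses $\nabla v(t)-\nabla E_\beta(t^\beta(-(-\Delta)^{\alpha/2}-\gamma))v_0$ as the nonlinear Duhamel integral in $\nabla v$. Finally, \eqref{decay3} is obtained by applying the estimates on $B_2$ and $B_3$ from \eqref{B2esti}--\eqref{0B3esti}, again exploiting the boundedness of $P$, Proposition \ref{pro:MLopLpqesti}, H\"older's inequality, and the fact that $\|\nabla\phi\|_d<M$.

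There is essentially no new obstacle: the exponents arising from the Mittag-Leffler $L^p$--$L^q$ estimates in Proposition \ref{pro:MLopLpqesti}, combined with the time-weight scaling of $\mathcal Y$, are exactly tuned so that each time-integral produces a Beta function of strictly positive arguments (this is precisely what Assumption \ref{assum1} encodes) and factors out the desired power $t^{-\frac{d\beta}{\alpha}(\frac{2\alpha-2}{d}-\frac{1}{q})}$, $t^{-\frac{d\beta}{\alpha}(\frac{\alpha-1}{d}-\frac{1}{r})}$ or $t^{-\frac{d\beta}{\alpha}(\frac{\alpha-1}{d}-\frac{1}{p})}$. Thus the corollary is a direct consequence of the estimates already established; the only care needed is to identify each term in \eqref{decay1}--\eqref{decay3} with the appropriate $A_i$, $Q_i$, or $B_i$ block of Section \ref{3.2} and note that the constant $M$ bounding $\|[n,v,u]\|_{\mathcal Y}$ and $\|\nabla\phi\|_d$ absorbs into the final $C$.
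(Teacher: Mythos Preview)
Your proposal is correct and takes essentially the same approach as the paper: the paper's proof simply observes that \eqref{decay1}--\eqref{decay3} follow directly from \eqref{mildsolu0} together with the already-established estimates \eqref{A2esti}--\eqref{A3esti}, \eqref{Q2esti1}--\eqref{Q3esti}, and \eqref{B2esti}--\eqref{0B3esti}, which is precisely what you spell out.
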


\begin{proof}
Obviously, on the basis of \eqref{mildsolu0}, the decay estimate \eqref{decay1}, \eqref{decay2} and \eqref{decay3} can be obtained strictly by \eqref{A2esti}-\eqref{A3esti}, \eqref{Q2esti1}-\eqref{Q3esti} and \eqref{B2esti}-\eqref{0B3esti} in the proof of Theorem \ref{thm:mildexis}, respectively.
\end{proof}

Next, we are going to prove the global stability of mild solution in Theorem \ref{thm:mildexis} under the initial disturbance and the perturbation of external forces.

\begin{cor}\label{thm:stability}
Let the exponents $p,q,r$ be as in Assumption \ref{assum1} and $M$ be as in Theorem \ref{thm:mildexis}. Assume that two initial data $(n_0,v_0,u_0)$ and $(n'_0,v'_0,u'_0)$ and two gravitational potential $\phi$ and $\phi'$ satisfy that
\begin{equation}\label{stainitial1}
\|n_0\|_{\frac{d}{2\alpha-2}}+\|\nabla v_0\|_{\frac{d}{\alpha-1}}+\|u_0\|_{\frac{d}{\alpha-1}}+\|\nabla\phi\|_{d}<M.
\end{equation}
\begin{equation}\label{stainitial2}
\|n'_0\|_{\frac{d}{2\alpha-2}}+\|\nabla v'_0\|_{\frac{d}{\alpha-1}}+\|u'_0\|_{\frac{d}{\alpha-1}}+\|\nabla\phi'\|_{d}<M.
\end{equation}
Suppose that $(n,v,u)$ and $(n',v',u')$ are mild solutions of \eqref{chemo-fluid-eq} given by Theorem \ref{thm:mildexis} with $(n,v,u)|_{t=0}=(n_0,v_0,u_0)$ and $(n',v',u')|_{t=0}=(n'_0,v'_0,u'_0)$. For any $\eta>0$, there is a constant $\delta=\delta(d,p,q,r,\eta)$ such that
\begin{equation}\label{minussta1}
\|n_0-n'_0\|_{\frac{d}{2\alpha-2}}+\|\nabla v_0-\nabla v'_0\|_{\frac{d}{\alpha-1}}+\|u_0-u'_0\|_{\frac{d}{\alpha-1}}+\|\nabla\phi-\nabla\phi'\|_{d}<\delta,
\end{equation}
then we have
\begin{equation}\label{minussta2}
\begin{split}
&\sup_{0<t<\infty}t^{\frac{d\beta}{\alpha}(\frac{2\alpha-2}{d}-\frac{1}{q})}\|n(t)-n'(t)\|_{q}
+\sup_{0<t<\infty}t^{\frac{d\beta}{\alpha}(\frac{\alpha-1}{d}-\frac{1}{r})}\|\nabla v(t)-\nabla v'(t)\|_{r}\\
&~~~+\sup_{0<t<\infty}t^{\frac{d\beta}{\alpha}(\frac{ \alpha-1}{d}-\frac{1}{p})}\|u(t)-u'(t)\|_{p}<\eta.
\end{split}
\end{equation}
\end{cor}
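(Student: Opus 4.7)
The plan is to mimic the bilinear estimates in the proof of Theorem \ref{thm:mildexis}, but applied to the differences $(n-n', v-v', u-u')$ rather than to the solutions themselves. Subtracting the mild formulations \eqref{mildsolu0} for the two triples and expanding each nonlinearity as
\begin{align*}
u\cdot\nabla n - u'\cdot\nabla n' &= (u-u')\cdot\nabla n + u'\cdot\nabla(n-n'),\\
n\nabla v - n'\nabla v' &= (n-n')\nabla v + n'\nabla(v-v'),\\
(u\cdot\nabla)u - (u'\cdot\nabla)u' &= \bigl((u-u')\cdot\nabla\bigr)u + (u'\cdot\nabla)(u-u'),\\
n\nabla\phi - n'\nabla\phi' &= (n-n')\nabla\phi + n'\nabla(\phi-\phi'),
\end{align*}
the difference $(n-n',v-v',u-u')$ satisfies an integral system of exactly the same shape as \eqref{mildsoluH}, with free evolution driven by the initial data differences and a bilinear source split into pairs of ``difference-times-$(n,v,u)$'' and ``$(n',v',u')$-times-difference'' terms, plus the extra term $n'\nabla(\phi-\phi')$.

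Next, I would apply Proposition \ref{pro:MLopLpqesti} to each term in the $\mathcal{Y}$-norm (with the same weights as in Theorem \ref{thm:mildexis}). The linear contributions carrying the initial data differences are bounded, via \eqref{MLLpq1}--\eqref{MLLpq4}, by a constant multiple of
\[
\|n_0-n'_0\|_{\frac{d}{2\alpha-2}} + \|\nabla v_0 - \nabla v'_0\|_{\frac{d}{\alpha-1}} + \|u_0-u'_0\|_{\frac{d}{\alpha-1}}.
\]
The bilinear contributions reproduce precisely the integrals already treated in \eqref{A2esti}--\eqref{0A3esti}, \eqref{Q2esti1}--\eqref{Q3esti} and \eqref{B2esti}--\eqref{0B3esti}; in each of them one of the two factors is now a component of the difference, while the other factor is controlled by the $\mathcal{Y}$-norm of $(n,v,u)$ or $(n',v',u')$. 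Assumption \ref{assum1} guarantees that the same Beta integrals converge as in Theorem \ref{thm:mildexis}, and by \eqref{nestbdd}, \eqref{vtildeestbdd}, \eqref{uestbdd} together with the smallness hypotheses \eqref{stainitial1}--\eqref{stainitial2}, both $\|(n,v,u)\|_{\mathcal{Y}}$ and $\|(n',v',u')\|_{\mathcal{Y}}$ are bounded by a universal constant times $M$. The remaining new term $n'\nabla(\phi-\phi')$ is handled exactly as $B_3$ in \eqref{B3esti}, with $\|\nabla\phi-\nabla\phi'\|_d$ in place of $\|\nabla\phi\|_d$.

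Write
\[
D := \|(n-n',\nabla v-\nabla v',u-u')\|_{\mathcal{Y}}, \qquad D_0 := \|n_0-n'_0\|_{\frac{d}{2\alpha-2}} + \|\nabla v_0-\nabla v'_0\|_{\frac{d}{\alpha-1}} + \|u_0-u'_0\|_{\frac{d}{\alpha-1}} + \|\nabla\phi-\nabla\phi'\|_d.
\]
Summing the estimates above yields an inequality of the form $D \leq C_1 D_0 + C_2 M\,D$, with $C_1,C_2$ depending only on $d,p,q,r,\alpha,\beta,\gamma$. Shrinking the constant $M$ in Theorem \ref{thm:mildexis} (if necessary) so that $C_2 M\le 1/2$ — this is consistent with the original smallness requirement — absorbs the last term and gives $D\le 2 C_1 D_0$. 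Choosing $\delta := \eta/(2C_1)$ then turns \eqref{minussta1} into \eqref{minussta2}.

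The main obstacle is purely bookkeeping: keeping track of which case of Assumption \ref{assum1} is in force for each bilinear estimate so that every time integral converges and every resulting power of $t$ matches the weight demanded by the $\mathcal{Y}$-norm on the left. Since this verification is identical to the one already executed in Step 2 of the proof of Theorem \ref{thm:mildexis}, no new exponent restrictions arise, and the entire argument goes through after replacing, one factor at a time, either $(n,v,u)$ or $(n',v',u')$ by the corresponding difference.
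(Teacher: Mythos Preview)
Your argument is correct, but it takes a different route from the paper. The paper's proof is a two-line application of the machinery already built in Theorem~\ref{thm:mildexis}: there the solution is obtained as $[n,v,u]=g(n_0,v_0,u_0,\phi)$ where $g:\mathcal{X}_M\to\mathcal{Y}_M$ is the $C^1$-map furnished by the Banach implicit function theorem. Since $g$ is in particular continuous, the stability statement \eqref{minussta1}$\Rightarrow$\eqref{minussta2} is immediate from the $\varepsilon$--$\delta$ definition of continuity of $g$ at the point $[n_0,v_0,u_0,\phi]$. No new estimates are performed.

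Your approach instead re-derives this continuity by hand: you subtract the two mild formulations, expand the bilinear terms, and rerun the $L^p$--$L^q$ estimates of Step~2 in Theorem~\ref{thm:mildexis} on the differences, arriving at $D\le C_1D_0+C_2MD$ and absorbing. This is perfectly valid and has the advantage of giving an explicit Lipschitz-type bound $D\le 2C_1D_0$ (hence an explicit $\delta=\eta/(2C_1)$), whereas the paper's soft argument only yields continuity without a quantitative modulus. The cost is the bookkeeping you mention, plus the need to check that the smallness constant $M$ from the implicit function theorem is compatible with the absorption condition $C_2M\le 1/2$; this is consistent but not automatic from the statement of Theorem~\ref{thm:mildexis} alone. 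The paper sidesteps all of this by invoking the abstract regularity of $g$.
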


\begin{proof}
Due to the proof of Theorem \ref{thm:mildexis}, we observe that $g$ is a $C^1$-map. Suppose that the initial data and gravitational potential $n_0,v_0,u_0,\phi$, $n'_0,v'_0,u'_0,\phi'$ satisfy \eqref{stainitial1} and \eqref{stainitial2}, respectively.\\ By the continuity of $g$, it holds that for any $\eta>0$, there exists $\delta=\delta(d,p,q,r,\eta)$ such that if
\[
\|[n_0,v_0,u_0,\phi]-[n'_0,v'_0,u'_0,\phi']\|_{\mathcal{X}_M}\leq \delta,
\]
then
\[
\|[n,v,u]-[n',v',u']\|_{\mathcal{Y}_M}\leq \eta.
\]
That completes the proof.
\end{proof}

At the end of this section, for $\gamma=0$, we prove the result in Theorem \ref{self-similarsolu} with respect to the self-similar solution.

\begin{proof}
Based on \eqref{mildsolu0}, we derive the mild solution of \eqref{chemo-fluid-eq} with $\gamma=0$ as follows.
\begin{equation}\label{mildsolu0alpha=2}
\left\{
  \begin{aligned}
  &n(x,t)=E_\beta(-t^\beta (-\Delta)^{\frac{\alpha}{2}})n_0-\int_0^t(t-\tau)^{\beta-1}E_{\beta,\beta}\big(-(t-\tau)^\beta (-\Delta)^{\frac{\alpha}{2}}\big)(u\cdot\nabla n+\nabla\cdot (n\nabla v))\,d\tau,\\
  &v(x,t)=E_\beta(-t^\beta (-\Delta)^{\frac{\alpha}{2}})v_0-\int_0^t(t-\tau)^{\beta-1}E_{\beta,\beta}\big(-(t-\tau)^\beta (-\Delta)^{\frac{\alpha}{2}}\big)(u\cdot\nabla v-n)\,d\tau,\\
  &u(x,t)=E_\beta(-t^\beta (-\Delta)^{\frac{\alpha}{2}})u_0-\int_0^t(t-\tau)^{\beta-1}E_{\beta,\beta}\big(-(t-\tau)^\beta (-\Delta)^{\frac{\alpha}{2}}\big)[P((u\cdot\nabla) u+n\nabla\phi)]\,d\tau.
  \end{aligned}
     \right.
\end{equation}
For initial data $(n_0,v_0,u_0)$ given in Theorem \ref{thm:mildexis}, $j\in \mathbb{N}$, we construct the following iterative sequence
\begin{align*}
\begin{split}
&n_1=E_\beta(-t^\beta (-\Delta)^{\frac{\alpha}{2}})n_0,~v_1=E_\beta(-t^\beta (-\Delta)^{\frac{\alpha}{2}})v_0,~u_1=E_\beta(-t^\beta (-\Delta)^{\frac{\alpha}{2}})u_0,\\
&n_{j+1}=n_1-\int_0^t(t-\tau)^{\beta-1}E_{\beta,\beta}\big(-(t-\tau)^\beta (-\Delta)^{\frac{\alpha}{2}}\big)(u_j\cdot\nabla n_j+\nabla\cdot (n_j\nabla v_j))\,d\tau,\\
&v_{j+1}=\int_0^t(t-\tau)^{\beta-1}E_{\beta,\beta}\big(-(t-\tau)^\beta (-\Delta)^{\frac{\alpha}{2}}\big)(u_j\cdot\nabla v_j-n_j)\,d\tau,\\
&u_{j+1}=\int_0^t(t-\tau)^{\beta-1}E_{\beta,\beta}\big(-(t-\tau)^\beta (-\Delta)^{\frac{\alpha}{2}}\big)[P((u_j\cdot\nabla) u_j+n_j\nabla\phi)]\,d\tau.
\end{split}
\end{align*}
Recall that $K^\alpha_tf(x)=(K_t\ast f)(x)=t^{-\frac{d}{\alpha}}\big(K(t^{-\frac{1}{\alpha}}\cdot)\ast f(\cdot)\big)(x)$ in \eqref{Kalphat}, by the assumption on homogeneity of initial data \eqref{inida}, it is easy to check that the following identities holds,
\begin{equation}\label{semiinit1}
\begin{split}
\lambda^{2\alpha-2}\big(K^\alpha(\lambda^\alpha t)n_0\big)(\lambda x)&=\big(K^\alpha(t)n_0\big)(x),\\
\big(K^\alpha(\lambda^\alpha t)v_0\big)(\lambda x)&=\big(K^\alpha(t)v_0\big)(x),\\
\lambda^{\alpha-1}\big(K^\alpha(\lambda^\alpha t)u_0\big)(\lambda x)&=\big(K^\alpha(t)u_0\big)(x).
\end{split}
\end{equation}
We shall infer that $n_j(x,t)$ are self-similar. First, for $j=1$, with the help of \eqref{MLM} and \eqref{semiinit1}, one has
\begin{gather}\label{semiN0}
\begin{split}
\lambda^{2\alpha-2}n_1(\lambda x,\lambda^{\frac{\alpha}{\beta}}t)&=\lambda^{2\alpha-2}\int_0^\infty M_{\beta}(s)\big(K^\alpha(\lambda^\alpha st^\beta)n_0\big)(\lambda x)\,ds\\
&=\int_0^\infty M_{\beta}(s)\big(K^\alpha(st^\beta)n_0\big)(x)\,ds\\
&=n_1( x,t).
\end{split}
\end{gather}
Proceeding by induction, for $j=2,3,\cdots$, we verify that $\lambda^{2\alpha-2}n_j(\lambda x,\lambda^{\frac{\alpha}{\beta}}t)=n_j(x,t)$. Indeed,
\[
\begin{split}
\lambda^{2\alpha-2}n_j(\lambda x,\lambda^{\frac{2}{\beta}}t)=&\lambda^{3\alpha-2}\int_0^t(t-\tau)^{\beta-1}\big(E_{\beta,\beta}\big(-\lambda^\alpha(t-\tau)^\beta (-\Delta)^{\frac{\alpha}{2}}\big)(u_{j-1}\cdot\nabla n_{j-1})(\lambda^{\frac{\alpha}{\beta}}\tau)\big)(\lambda x)\,d\tau\\
&~+\lambda^{3\alpha-2}\int_0^t(t-\tau)^{\beta-1}\big(E_{\beta,\beta}\big(-\lambda^\alpha(t-\tau)^\beta (-\Delta)^{\frac{\alpha}{2}}\big)\\
&~~~~~~~~~~~~~\times(\nabla\cdot (n_{j-1}\nabla v_{j-1}))(\lambda^{\frac{\alpha}{\beta}}\tau)\big)(\lambda x)\,d\tau\\
=&\int_0^t(t-\tau)^{\beta-1}\big(E_{\beta,\beta}\big(-(t-\tau)^\beta (-\Delta)^{\frac{\alpha}{2}}\big)(u_{j-1}\cdot\nabla n_{j-1})(\tau)\big)(x)\,d\tau\\
&~+\int_0^t(t-\tau)^{\beta-1}\big(E_{\beta,\beta}\big(-(t-\tau)^\beta (-\Delta)^{\frac{\alpha}{2}}\big)(\nabla\cdot (n_{j-1}\nabla v_{j-1}))(\tau)\big)(x)\,d\tau\\
=&n_j( x,t).
\end{split}
\]

In a similar way, we obtain that for $j\in \mathbb{N}$,
\[
v_j(\lambda x,\lambda^{\frac{\alpha}{\beta}}t)=v_j( x,t),~~~~
\lambda^{\alpha-1} u_j(\lambda x,\lambda^{\frac{\alpha}{\beta}}t)=u_j( x,t).
\]
Since $[n,v,u]$ is the limit in $\mathcal{Y}$ of the sequence $[n_j,v_j,u_j]$, it follows that $[n,v,u]$ is the self-similar solution to system (1.1) with $\gamma=0$.

\end{proof}

\textbf{Acknowledgments}
The author thank the anonymous reviewers for careful reading of this manuscript and insightful suggestions. This work is supported by the National Natural Science Foundation of China (Grant No. 12271433).

\section*{References}

\bibliography{frac-KSNS-equ-b-2}

\begin{thebibliography}{10}

\bibitem{MALCAV}
M.~Allen, L.~Caffarelli, and A.~Vasseur.
\newblock Porous medium flow with both a fractional potential pressure and
  fractional time derivative.
\newblock {\em Chin. Ann. Math. Ser. B}, 38(1):45--82, 2017.

\bibitem{Azevedo2022}
J.~Azevedo, C.~Cuevas, J.~Dantas, and C.~Silva.
\newblock On the fractional chemotaxis navier-stokes system in the critical
  spaces.
\newblock {\em Discrete Contin. Dyn. Syst. Ser. B}, 28(1):538--559, 2023.

\bibitem{Azevedo}
J.~Azevedo, C.~Cuevas, and E.~Henriquez.
\newblock Existence and asymptotic behaviour for the time-fractional
  keller--segel model for chemotaxis.
\newblock {\em Math. Nachr.}, 292(3):462--480, 2019.

\bibitem{BnBaCn}
N.~Bellomo, A.~Bellouquid, and N.~Chouhad.
\newblock From a multiscale derivation of nonlinear cross-diffusion models to
  keller--segel models in a navier--stokes fluid.
\newblock {\em Math. Models Methods Appl. Sci.}, 26(11):2041--2069, 2016.

\bibitem{MarioB}
M.~Bezerra, C.~Cuevas, C.~Silva, and H.~Soto.
\newblock On the fractional doubly parabolic keller-segel system modelling
  chemotaxis.
\newblock {\em Sci. China Math.}, 65(9):1827--1874, 2022.

\bibitem{Biler1998}
P.~Biler.
\newblock Local and global solvability of some parabolic systems modelling
  chemotaxis.
\newblock {\em Adv. Math. Sci. Appl.}, 8:715--743, 1998.

\bibitem{BW}
P.~Biler and G.~Wu.
\newblock Two-dimensional chemotaxis models with fractional diffusion.
\newblock {\em Math. Method. Appl. Sci.}, 32(1):112--126, 2009.

\bibitem{bonforte2016}
M.~Bonforte, Y.~Sire, and J.~L. V{\'a}zquez.
\newblock Optimal existence and uniqueness theory for the fractional heat
  equation.
\newblock {\em Nonlinear Anal.}, 153:142--168, 2017.

\bibitem{NBVCal}
N.~Bournaveas and V.~Calvez.
\newblock The one-dimensional keller--segel model with fractional diffusion of
  cells.
\newblock {\em Nonlinearity}, 23(4):923--935, 2010.

\bibitem{JBRGBelin}
J.~Burczak and R.~Granero-Belinch{\'o}n.
\newblock Critical keller--segel meets burgers on: large-time smooth solutions.
\newblock {\em Nonlinearity}, 29(12):3810--3836, 2016.

\bibitem{Caffarelli2007}
L.~Caffarelli and L.~Silvestre.
\newblock An extension problem related to the fractional laplacian.
\newblock {\em Commun. Partial Differ. Equ.}, 32(8):1245--1260, 2007.

\bibitem{Caffarelli2008}
L.~A. Caffarelli and P.~E. Souganidis.
\newblock Convergence of nonlocal threshold dynamics approximations to front
  propagation.
\newblock {\em Arch. Ration. Mech. Anal.}, 195(1):1--23, 2008.

\bibitem{Caffarelli2010}
L.~A. Caffarelli and A.~Vasseur.
\newblock Drift diffusion equations with fractional diffusion and the
  quasi-geostrophic equation.
\newblock {\em Ann. Math.}, pages 1903--1930, 2010.

\bibitem{CalvezLC2008}
V.~Calvez and L.~Corrias.
\newblock The parabolic-parabolic keller-segel model in r2.
\newblock {\em Commun. Math. Sci.}, 6(2):417--447, 2008.

\bibitem{ChaeKangLee}
M.~Chae, K.~Kang, and J.~Lee.
\newblock Existence of smooth solutions to coupled chemotaxis-fluid equations.
\newblock {\em Discrete Contin. Dyn. Syst. Ser. A}, 33(6):2271--2297, 2013.

\bibitem{MCHAEKLEE}
M.~Chae, K.~Kang, and J.~Lee.
\newblock Global existence and temporal decay in keller-segel models coupled to
  fluid equations.
\newblock {\em Commun. Partial Differ. Equ.}, 39(7):1205--1235, 2014.

\bibitem{CHENLIOU}
W.~Chen, C.~Li, and B.~Ou.
\newblock Classification of solutions for an integral equation.
\newblock {\em Commun. Pure Appl. Math.}, 59(3):330--343, 2006.

\bibitem{LCBP2006}
L.~Corrias and B.~Perthame.
\newblock Critical space for the parabolic-parabolic keller--segel model in rd.
\newblock {\em C. R. Math.}, 342(10):745--750, 2006.

\bibitem{de}
P.~M. de~Carvalho-Neto and G.~Planas.
\newblock Mild solutions to the time fractional navier--stokes equations in rn.
\newblock {\em J. Differential Equations}, 259(7):2948--2980, 2015.

\bibitem{DCL}
D.~del Castillo-Negrete, B.~Carreras, and V.~Lynch.
\newblock Fractional diffusion in plasma turbulence.
\newblock {\em Phys. Plasmas}, 11(8):3854--3864, 2004.

\bibitem{KDiethelm}
K.~Diethelm.
\newblock {\em The analysis of fractional differential equations: An
  application-oriented exposition using operators of Caputo type}.
\newblock Springer Berlin, 2010.

\bibitem{DLMarkowich}
R.~Duan, A.~Lorz, and P.~Markowich.
\newblock Global solutions to the coupled chemotaxis-fluid equations.
\newblock {\em Commun. Partial Differ. Equ.}, 35(9):1635--1673, 2010.

\bibitem{E}
C.~Escudero.
\newblock The fractional keller segel model.
\newblock {\em Nonlinearity}, 19(12):2909--2918, 2006.

\bibitem{Fengliliuxu2018}
Y.~Feng, L.~Li, J.-G. Liu, and X.~Xu.
\newblock A note on one-dimensional time fractional odes.
\newblock {\em Appl. Math. Lett.}, 83:87--94, 2018.

\bibitem{LucasCFFJCP}
L.~C. Ferreira and J.~C. Precioso.
\newblock Existence and asymptotic behaviour for the parabolic--parabolic
  keller--segel system with singular data.
\newblock {\em Nonlinearity}, 24(5):1433--1449, 2011.

\bibitem{GuoBL}
B.~Guo, X.~Pu, and F.~Huang.
\newblock {\em Fractional partial differential equations and their numerical
  solutions}.
\newblock World Scientific, 2015.

\bibitem{HuaZhang}
Q.~Hua and Q.~Zhang.
\newblock On the global well-posedness for the 3d axisymmetric incompressible
  keller--segel--navier--stokes equations.
\newblock {\em Z. Angew. Math. Phys.}, 72(5):179, 2021.

\bibitem{HL}
H.~Huang and J.-G. Liu.
\newblock Well-posedness for the keller-segel equation with fractional
  laplacian and the theory of propagation of chaos.
\newblock {\em Kinet. Relat. Models}, 9(4):715--748, 2016.

\bibitem{JLLZhou}
K.~Jiang, Z.~Ling, Z.~Liu, and L.~Zhou.
\newblock Existence, uniqueness and decay estimates on mild solutions to
  fractional chemotaxis-fluid systems.
\newblock {\em Topol. Method. Nonl. An.}, 57(1):25--56, 2021.

\bibitem{ZWJiangLW}
Z.-w. Jiang and L.-z. Wang.
\newblock Weak solutions to the cauchy problem of fractional time-space
  keller--segel equation.
\newblock {\em Math. Method. Appl. Sci.}, 44(18):14094--14113, 2021.

\bibitem{KangKim}
K.~Kang and D.~Kim.
\newblock Existence of generalized solutions for keller-segel-navier-stokes
  equations with degradation in dimension three.
\newblock {\em Math. Eng.}, 4(5):1--25, 2022.

\bibitem{EFKellerSe1}
E.~F. Keller and L.~A. Segel.
\newblock Initiation of slime mold aggregation viewed as an instability.
\newblock {\em J. Theoret. Biol.}, 26(3):399--415, 1970.

\bibitem{EFKellerSe2}
E.~F. Keller and L.~A. Segel.
\newblock Traveling bands of chemotactic bacteria: a theoretical analysis.
\newblock {\em J. Theoret. Biol.}, 30(2):235--248, 1971.

\bibitem{SGSAAKOIM}
A.~A. Kilbas, O.~Marichev, and S.~Samko.
\newblock {\em Fractional integrals and derivatives (theory and applications)}.
\newblock Gordon and Breach, Switzerland, 1993.

\bibitem{Fracdiffeq2006}
A.~A. Kilbas, H.~M. Srivastava, and J.~J. Trujillo.
\newblock {\em Theory and Applications of Fractional Differential Equations}.
\newblock Elsevier, 2006.

\bibitem{Kozono2016}
H.~Kozono, M.~Miura, and Y.~Sugiyama.
\newblock Existence and uniqueness theorem on mild solutions to the
  keller--segel system coupled with the navier--stokes fluid.
\newblock {\em J. Funct. Anal.}, 270(5):1663--1683, 2016.

\bibitem{KOZONOYS2009}
H.~Kozono and Y.~Sugiyama.
\newblock Global strong solution to the semi-linear keller--segel system of
  parabolic--parabolic type with small data in scale invariant spaces.
\newblock {\em J. Differential Equations}, 247(1):1--32, 2009.

\bibitem{LMZH}
B.~Lai, C.~Miao, and X.~Zheng.
\newblock Forward self-similar solutions of the fractional navier-stokes
  equations.
\newblock {\em Adv. Math.}, 352:981--1043, 2019.

\bibitem{TALBIH}
T.~Langlands and B.~Henry.
\newblock Fractional chemotaxis diffusion equations.
\newblock {\em Phys. Rev. E}, 81(5):051102, 2010.

\bibitem{LRZ}
D.~Li, J.~L. Rodrigo, and X.~Zhang.
\newblock Exploding solutions for a nonlocal quadratic evolution problem.
\newblock {\em Rev. Mat. Iberoam.}, 26(1):295--332, 2010.

\bibitem{LJLiu2}
L.~Li and J.-G. Liu.
\newblock A generalized definition of caputo derivatives and its application to
  fractional odes.
\newblock {\em SIAM J. Math. Anal.}, 50(3):2867--2900, 2018.

\bibitem{LJLiu1}
L.~Li and J.-G. Liu.
\newblock Some compactness criteria for weak solutions of time fractional pdes.
\newblock {\em SIAM J. Math. Anal.}, 50(4):3963--3995, 2018.

\bibitem{LLW}
L.~Li, J.-G. Liu, and L.~Wang.
\newblock Cauchy problems for keller--segel type time--space fractional
  diffusion equation.
\newblock {\em J. Differential Equations}, 265(3):1044--1096, 2018.

\bibitem{LYLY}
Y.~Li and Y.~Li.
\newblock Global boundedness of solutions for the chemotaxis-navier--stokes
  system in r2.
\newblock {\em J. Differential Equations}, 261(11):6570--6613, 2016.

\bibitem{LIULorz}
J.-G. Liu and A.~Lorz.
\newblock A coupled chemotaxis-fluid model: Global existence.
\newblock {\em Ann. Inst. H. Poincar{\'e}, Anal. Non Lin{\'e}aire},
  28(5):643--652, 2011.

\bibitem{Lorz2010}
A.~Lorz.
\newblock Coupled chemotaxis fluid model.
\newblock {\em Math. Models Methods Appl. Sci.}, 20(06):987--1004, 2010.

\bibitem{MBSB}
M.~M. Meerschaert, D.~A. Benson, H.-P. Scheffler, and B.~Baeumer.
\newblock Stochastic solution of space-time fractional diffusion equations.
\newblock {\em Phys. Rev. E}, 65(4):041103, 2002.

\bibitem{MiaoYuanZhang}
C.~Miao, B.~Yuan, and B.~Zhang.
\newblock Well-posedness of the cauchy problem for the fractional power
  dissipative equations.
\newblock {\em Nonlinear Anal.}, 68(3):461--484, 2008.

\bibitem{TNRSMU}
T.~NAGAI, R.~Syukuinn, and M.~Umesako.
\newblock Decay properties and asymptotic profiles of bounded solutions to a
  parabolic system of chemotaxis in rn.
\newblock {\em Funkcial. Ekvac.}, 46(3):383--407, 2003.

\bibitem{SakaYama}
K.~Sakamoto and M.~Yamamoto.
\newblock Initial value/boundary value problems for fractional diffusion-wave
  equations and applications to some inverse problems.
\newblock {\em J. Math. Anal. Appl.}, 382(1):426--447, 2011.

\bibitem{Stanvazqez}
D.~Stan and J.~L. V{\'a}zquez.
\newblock The fisher-kpp equation with nonlinear fractional diffusion.
\newblock {\em SIAM J. Math. Anal.}, 46(5):3241--3276, 2014.

\bibitem{Stein}
E.~M. Stein.
\newblock {\em Singular integrals and differentiability properties of
  functions}, volume~2.
\newblock Princeton university press, 1970.

\bibitem{TANZhou2018}
Z.~Tan and J.~Zhou.
\newblock Decay estimate of solutions to the coupled chemotaxis--fluid
  equations in r3.
\newblock {\em Nonlinear Anal. Real World Appl.}, 43:323--347, 2018.

\bibitem{TanZhou}
Z.~Tan and J.~Zhou.
\newblock Global existence and time decay estimate of solutions to the
  keller--segel system.
\newblock {\em Math. Method Appl. Sci.}, 42(1):375--402, 2019.

\bibitem{Taylor1}
M.~Taylor.
\newblock Remarks on fractional diffusion equations.
\newblock \url{https://mtaylor.web.unc.edu/files/2018/04/fdif.pdf}.

\bibitem{TuvalCDW}
I.~Tuval, L.~Cisneros, C.~Dombrowski, C.~W. Wolgemuth, J.~O. Kessler, and R.~E.
  Goldstein.
\newblock Bacterial swimming and oxygen transport near contact lines.
\newblock {\em Proc. Natl. Acad. Sci. USA}, 102(7):2277--2282, 2005.

\bibitem{V1}
J.-L. V{\'a}zquez.
\newblock {\em Nonlinear diffusion with fractional Laplacian operators},
  volume~29.
\newblock Springer, 2012.

\bibitem{V2}
J.-L. V{\'a}zquez.
\newblock Recent progress in the theory of nonlinear diffusion with fractional
  laplacian operators.
\newblock {\em Discrete Contin. Dyn. Syst. Ser. S}, 7(4):857--885, 2014.

\bibitem{WinklerM2010}
M.~Winkler.
\newblock Aggregation vs. global diffusive behavior in the higher-dimensional
  keller--segel model.
\newblock {\em J. Differential Equations}, 248(12):2889--2905, 2010.

\bibitem{Winkler2012}
M.~Winkler.
\newblock Global large-data solutions in a chemotaxis-(navier--) stokes system
  modeling cellular swimming in fluid drops.
\newblock {\em Commun. Partial Differ. Equ.}, 37(2):319--351, 2012.

\bibitem{Winkler2014}
M.~Winkler.
\newblock Stabilization in a two-dimensional chemotaxis-navier--stokes system.
\newblock {\em Arch. Ration. Mech. Anal.}, 211(2):455--487, 2014.

\bibitem{MWinkler2019}
M.~Winkler.
\newblock A three-dimensional keller--segel--navier--stokes system with
  logistic source: global weak solutions and asymptotic stabilization.
\newblock {\em J. Funct. Anal.}, 276(5):1339--1401, 2019.

\bibitem{W}
L.~Wolfersdorf.
\newblock On identification of memory kernels in linear theory of heat
  conduction.
\newblock {\em Math. Method. Appl. Sci.}, 17(12):919--932, 1994.

\bibitem{ZhangLI}
Q.~Zhang and Y.~Li.
\newblock Convergence rates of solutions for a two-dimensional
  chemotaxis-navier-stokes system.
\newblock {\em Discrete Contin. Dyn. Syst. Ser. B}, 20(8):2751--2759, 2015.

\bibitem{ZMSD}
Y.~Zhou, J.~Manimaran, L.~Shangerganesh, and A.~Debbouche.
\newblock Weakness and mittag--leffler stability of solutions for
  time-fractional keller--segel models.
\newblock {\em Int. J. Nonlin. Sci. Num.}, 19(7-8):753--761, 2018.

\bibitem{ZLZ}
S.~Zhu, Z.~Liu, and L.~Zhou.
\newblock Decay estimates for the classical solution of keller--segel system
  with fractional laplacian in higher dimensions.
\newblock {\em Appl. Anal.}, 99(3):447--461, 2020.

\end{thebibliography}
\bibliographystyle{abbrv}

\end{document}